\numberwithin{equation}{section}
\declaretheorem[numberwithin=section,name=Theorem]{theorem}
\declaretheorem[sibling=theorem,name=Lemma]{lemma}
\declaretheorem[sibling=theorem,name=Proposition]{proposition}
\declaretheorem[sibling=theorem,name=Corollary]{corollary}
\declaretheorem[sibling=theorem,name=Remark,style=remark]{remark}
\newcommand{\nbd}[2]{({#1})^{#2}}
\newcommand{\sph}[2]{S^{#1}({#2})}
\newcommand{\ft}{\widehat}
\newcommand{\A}{\mathcal C}
\newcommand{\E}{\mathbb E}
\renewcommand{\P}{\mathbb P}
\newcommand{\R}{\mathbb R}
\let\se\S
\renewcommand{\S}{\mathbb S}
\newcommand{\Z}{\mathbb Z}
\newcommand{\cH}{\mathcal{H}}
\DeclareMathOperator{\dist}{dist}
\newcommand{\les}{\lesssim}
\newcommand{\less}{\lessapprox}
\newcommand{\NS}{\mathcal{S}^\delta} \newcommand{\ST}{\mathcal{S}_{T}} 
\newcommand\labelrel[2]{\stackrel{\textnormal{\eqref{#2}}}{\mathstrut{#1}}}
\newcommand{\NM}{\mathcal{N}^\delta} \newcommand{\NMO}{\mathcal{N}^{O(\delta)}} \newcommand{\NT}{\mathcal{N}_{T}^{\delta}} \newcommand{\NTk}{\mathcal{N}_{T}^{2^{k+1}\delta}}
\newcommand{\MT}{\mathcal{N}_{T}}
\newcommand{\avg}{\operatorname{Avg}}
\newcommand{\ges}{\gtrsim}
\newcommand{\wh}{\widehat}
\DeclarePairedDelimiter\ceil{\lceil}{\rceil}
\DeclarePairedDelimiter\floor{\lfloor}{\rfloor}
\begin{document}
\title{Nikodym sets and maximal functions associated with spheres}
\author{Alan Chang}
\address{Department of Mathematics, Washington University in St.\ Louis, St.\ Louis, MO 63130, USA}
\email{alanchang@math.wustl.edu}

\author{Georgios Dosidis}
\address{Department of Mathematical Analysis, Faculty of Mathematics and Physics, Charles University,
			Prague, Czechia}
\email{dosidis@karlin.mff.cuni.cz}

\author{Jongchon Kim}
\address{Department of Mathematics, City University of Hong Kong, Hong Kong SAR, China}
\email{jongckim@cityu.edu.hk}

\begin{abstract} 
We study spherical analogues of Nikodym sets and related maximal functions. In particular, we prove sharp $L^p$-estimates for Nikodym maximal functions associated with spheres. As a corollary, any Nikodym set for spheres must have full Hausdorff dimension. In addition, we consider a class of maximal functions which contains the spherical maximal function as a special case. We show that $L^p$-estimates for these maximal functions can be deduced from local smoothing estimates for the wave equation relative to fractal measures. \end{abstract}

\maketitle

\tableofcontents

\section{Introduction} A set $A \subset \R^n$ of zero Lebesgue measure is a \emph{Nikodym set for spheres (resp., unit spheres)} if for every $y$ in a set of positive Lebesgue measure, there exists a sphere (resp., unit sphere) $S$ containing $y$ such that $A\cap S$ has positive $(n-1)$-dimensional Hausdorff measure. The existence of these sets was first proven in two dimensions in {\cite{CC2019}}. We extend the construction to all dimensions. (See \Cref{theorem:nikodym-translations-sphere}.)

Associated to these Nikodym sets are the following \emph{Nikodym maximal operators}:
\begin{align}
\label{eq:NM-def}
\NM f(x) &= \sup_{u\in \S^{n-1}} \frac{1}{|S^{\delta}(0)|} \left|\int_{S^\delta(0)} f(x+u+y)\, dy \right|
\\
\label{eq:NS-def}
\NS f(x) &= \sup_{u\in \S^{n-1}} \sup_{1\leq t\leq 2} \frac{1}{|S^{\delta}(0)|} \left|\int_{S^\delta(0)} f(x+t(u+y))\, dy \right|,
\end{align}
where $S^\delta(0)$ denotes the $\delta$-neighborhood of the unit sphere $\S^{n-1}$. $\NM f(x)$ is the supremum of averages of $f$ on the $\delta$-neighborhoods of every unit sphere passing through $x$, while for $\NS$ we allow the radius to vary. We show that these operators are bounded on $L^p(\R^n)$; furthermore, we show that in our bounds, the dependence on $\delta$ is sharp for every $p$ and $n$. (See \Cref{theorem:NM-upper-bounds,theorem:NS-upper-bounds}.) As a corollary, we conclude that all Nikodym sets for spheres have full Hausdorff dimension. (See \Cref{theorem:nikodym-dimension}.)

Restricting the set of translation to a compact $T \subset \R^{n}$, we also study the \emph{uncentered spherical maximal operators}:
\begin{align} 
\label{eq:MT-def}
\MT f(x)  &= \sup_{u\in T} \left| \int_{\S^{n-1}} f(x + u + y) \, d\sigma(y) \right |
\\
\label{eq:ST-def}
\ST f(x)  &= \sup_{u\in T} \sup_{t>0} \left| \int_{\S^{n-1}} f(x + t(u+y)) \, d\sigma(y) \right |,
\end{align}
where $d\sigma$ denotes the normalized surface measure on the sphere. These operators are initially defined for continuous functions with compact support. If $T = \S^{n-1}$, then the existence of Nikodym sets for unit spheres implies that these operators are not bounded on $L^p(\R^n)$ for any finite $p$. On the other hand, if $T=\{0\}$, $\ST$ is the classical spherical maximal function, which is known to be bounded for some range of $p$; see \Cref{sec:uncentered}. We show that if the upper Minkowski dimension of $T$ is strictly less that $n-1$, then $\MT$ and $\ST$ are bounded on $L^p(\R^n)$ for some range of $p$. (See \Cref{thm:MT,thm:ST}.)

\subsection{Nikodym maximal functions associated with spheres}

We start with a brief overview of the classical Nikodym sets and Nikodym maximal functions. A \emph{Nikodym set} (for lines) is a set $A \subset \R^n$ of zero Lebesgue measure such that for every $x \in \R^n$, there is a line $\ell$ through $x$ such that $A \cap \ell$ contains a unit line segment. The existence of Nikodym sets was discovered by Nikodym \cite{nikodym1927}. Nikodym sets are closely related to \emph{Kakeya sets} (a.k.a.~\emph{Besicovitch sets}), which are sets in $\R^n$ which contain a unit line segment in every direction. Kakeya sets of zero Lebesgue measure were discovered by Besicovitch \cite{Besicovitch1928}. 

In the seminal paper \cite{cordoba1977}, C\'ordoba introduced the Nikodym maximal function 
\begin{equation}\label{eq:classicalNikodymMaximal}
x \mapsto \sup_{\tau \ni x} \frac{1}{|\tau^\delta|}\left| \int_{\tau^\delta}f \right|,
\end{equation}
where the supremum is taken over all unit line segments $\tau$ centered at $x$, and $\tau^\delta$ denotes the $\delta$-neighborhood of $\tau$. 
Lower bounds on the dimension of Nikodym sets can be obtained from $L^p$-bounds for the Nikodym maximal function. See, e.g., \cite{Mattila2015} for more on Kakeya and Nikodym sets and the interplay between geometric measure theory and Fourier analysis.

The operators $\NM$ and $\NS$ defined above in \eqref{eq:NM-def} and \eqref{eq:NS-def} are analogues of \eqref{eq:classicalNikodymMaximal}, with spheres instead of lines. We now discuss the existence of Nikodym sets for spheres.

By adapting a construction of Cunningham \cite{cunningham}, H\'era and Laczkovich showed in \cite{HL2016} that a sufficiently short circular arc can be moved (via rigid motions) to any position in the plane within a region of arbitrarily small area; this can be considered the analogue of the Kakeya needle problem for circular arcs.

The results in \cite{HL2016} were extended by Cs\"ornyei and the first author, who showed that if one removes a neighborhood of two diametrically opposite points from a circle, the resulting set can be moved to any other position in arbitrarily small area \cite[Corollary 1.3]{CC2019}. In fact, they studied a Kakeya needle problem variant for all rectifiable sets, not just circles. See \cite[Theorem 1.2]{CC2019}.

Our first result is a higher-dimensional analogue of {\cite[Theorem 6.9]{CC2019}}, specialized to spheres.  

\begin{theorem}[Existence of Nikodym sets for unit spheres]
	\label{theorem:nikodym-translations-sphere}
	There exists a set $A \subset \R^n$ such that:
\begin{enumerate}
		\item
		$A$ has Lebesgue measure zero.
		\item
		For all $y \in \R^n$, there is a point $p_y \in \R^n$ and an $(n-1)$-plane $V_y $ containing $0$ such that  $y \in p_y + \S^{n-1}$  and $p_y + (\S^{n-1}  \setminus V_y) \subset A$.
	\end{enumerate}
Also, the mappings $y \mapsto p_y$ and $y \mapsto V_y$ are Borel.
\end{theorem}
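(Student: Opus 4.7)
My plan is to adapt the planar argument of \cite[Theorem 6.9]{CC2019} to $n$ dimensions, in two main steps. \emph{Step 1 (reduction to a quantitative statement):} A Baire-category argument in the space of compact subsets of a large ball (equipped with the Hausdorff metric), as in \cite{CC2019}, reduces the theorem to the following quantitative assertion: for every $\epsilon>0$ and every bounded $K\subset\R^n$, there exist a Borel set $A_{K,\epsilon}\subset\R^n$ with $|A_{K,\epsilon}|<\epsilon$ and Borel-measurable maps $y\mapsto p_y$, $y\mapsto V_y$ on $K$ satisfying $y\in p_y+\S^{n-1}$ and $p_y+(\S^{n-1}\setminus V_y)\subset A_{K,\epsilon}$. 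The desired $A$ is then extracted as a generic element (or equivalently a $G_\delta$ intersection) arising from a countable sequence of such quantitative sets with $K\nearrow\R^n$ and $\epsilon\searrow 0$; the Borel property of the final selections $y\mapsto p_y$, $y\mapsto V_y$ is inherited from the Borel selections at each scale.

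\emph{Step 2 (quantitative construction):} This is the main work. I would build $A_{K,\epsilon}$ by an iterative slide-and-overlap construction in the spirit of the Perron-tree / Cunningham argument used in \cite{CC2019}. The geometric building block is the observation that if two unit $(n-1)$-spheres in $\R^n$ have centers at distance $\rho$, then the symmetric difference of their $\delta$-neighborhoods has $n$-dimensional measure of order $\rho$ times that of a single neighborhood, so small translations of spheres produce thickenings that overlap almost completely. A convenient choice of equator is $V_y=(y-p_y)^\perp$, which places $y$ at a pole of its sphere relative to the removed equator, and allows $p_y$ to be translated orthogonally to $y-p_y$ (staying on the sphere through $y$) while the removed equator rotates correspondingly. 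Starting from a fine net in $K$ with an initial sphere and equator assigned at each net point, one iteratively slides these spheres so that neighbouring ones coincide more heavily on their thickenings, organised by a tree whose leaves partition $K$. After sufficiently many iterations the union of these sphere-minus-equators has measure $<\epsilon$, and each $y\in K$ still lies on some sphere in the resulting family. Borel measurability of $y\mapsto(p_y,V_y)$ then follows from the piecewise nature of the tree decomposition.

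\emph{Main obstacle.} The principal difficulty is carrying out the Perron-style iteration in the $n$-dimensional setting. A naive reduction to $2$-dimensional slices is inadequate: a generic $2$-plane intersects a unit $(n-1)$-sphere in a $0$-sphere (two points), not a circle, and even restricting to $2$-planes through the sphere's centre covers only a $1$-parameter family of great circles, missing most of the $(n-1)$-dimensional sphere through $y$. One must instead use the full rigid-motion group and the higher-dimensional overlap geometry of thickened spheres. I expect the technical execution to be a careful but essentially routine generalisation of the planar tree construction of \cite{CC2019}, with the $2$D overlap estimates replaced by their $n$-dimensional analogues and the combinatorics of the tree adjusted to match the geometry of the equators $V_y$.
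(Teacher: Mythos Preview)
Your proposal misidentifies the main obstacle and thereby misses the shortcut the paper actually takes. The paper does \emph{not} attempt a genuinely $n$-dimensional Perron-type iteration; instead it lifts the planar Venetian blind construction of \cite{CC2019} almost verbatim. The trick is that the polygonal path of translations is chosen to lie entirely in a fixed $2$-plane $\R^2\subset\R^n$, and for a segment $L_i$ with direction $\theta_i\in\S^1\subset\R^2$ one takes the equator to be the full hyperplane $V_i=\theta_i^\perp\subset\R^n$. The only new ingredients are the two elementary measure estimates: $|P+E|\lesssim \cH^{n-1}(E)\,\cH^1(P)$ for $E\subset\R^n$, and, for $R\subset\{x\in\S^{n-1}:|x\cdot\theta|\le\epsilon\}$, the bound $|L+R|\le\epsilon\,\cH^1(L)\,\cH^{n-1}(R)$. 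These replace the corresponding $2$D lemmas in \cite{CC2019}, after which the iterated Venetian blind argument runs unchanged to give the quantitative needle theorem; the limiting procedure of \cite[Section~6]{CC2019} then yields the Besicovitch-type set, and countably many translates give the Nikodym set.

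Your ``main obstacle'' paragraph is therefore off target in two ways. First, a generic affine $2$-plane meets an $(n-1)$-sphere in a circle, not a $0$-sphere (the intersection has codimension $1+(n-2)=n-1$ generically). Second, and more importantly, the paper does not slice at all: it translates the \emph{entire} $(n-1)$-sphere along a path in $\R^2$, and the point is that the portion of $\S^{n-1}$ with $|x\cdot\theta|>\epsilon$ projects injectively (with bounded Jacobian) onto $\theta^\perp$, so sliding it in direction $\theta$ sweeps out measure proportional to $\epsilon$. Your overlap heuristic (symmetric difference of nearby thickened spheres is small) is morally related but is not the estimate that drives the construction; one needs the directional refinement above, which singles out the equator $\theta^\perp$ rather than $(y-p_y)^\perp$. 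Note also that in your scheme ``translating $p_y$ orthogonally to $y-p_y$ while staying on the sphere through $y$'' is only an infinitesimal statement; an actual straight-line translation does not preserve $|y-p_y|=1$, so you would be forced into a curved path and a genuinely higher-dimensional bookkeeping of rotating equators. The paper avoids all of this by fixing the $2$-plane from the start.
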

The set $A$ from \Cref{theorem:nikodym-translations-sphere} contains, for every $y\in \R^n$,  a unit $(n-1)$-sphere containing $y$ up to a great $(n-2)$-sphere. It is a prototypical Nikodym set for unit spheres. The proof of \Cref{theorem:nikodym-translations-sphere} closely follows the proof in {\cite{CC2019}} for the case $n=2$, but we include an outline in \Cref{section:KakeyaSet} for the sake of completeness.

Due to the existence of a Nikodym set for unit spheres, for any finite $p$, the $L^p$-operator norm of the maximal function $\NM$ (see \eqref{eq:NM-def}) cannot be bounded uniformly in $\delta$ as $\delta \to 0$. The following theorem determines the $L^p$-operator norm $\| \NM \|_{L^p \to L^p}$ up to a factor of $\delta^{-\epsilon}$ for an arbitrarily small $\epsilon>0$. Here and in the following, we denote by $A\les B$ and $A\less B$ the inequalities $A\leq C B$ and $A\leq C_\epsilon \delta^{-\epsilon} B$ for any $\epsilon\in(0,1/2)$, respectively, for some absolute constants $C, C_\epsilon>0$. 

\begin{theorem}
	\label{theorem:NM-upper-bounds}
	\ 
	\begin{enumerate}[label=(\roman*)]
		\item\label{theorem:NM-upper-bounds-item:upper-bounds}
		When $n=2$, 
		\[
		\| \NM \|_{L^p(\R^2) \to L^p(\R^2)} \lessapprox 
		\begin{cases}
			\delta^{1 - \frac{2}{p}}, & \quad 1\leq p\leq 2
			\\
			1,& \quad 2\leq p\leq \infty.
		\end{cases}
		\] 
When $n = 3$, 
		\[ 
		\| \NM \|_{L^p(\R^3) \to L^p(\R^3)} \lessapprox 
		\begin{cases}
			\delta^{\frac{3}{2}-\frac{5}{2p}}, & 1\leq p\leq 3/2 
			\\
			\delta^{\frac{1}{2}-\frac{1}{p}}, &3/2 \leq p \leq 2
			\\
			1, & 2\leq p\leq \infty.
		\end{cases}
		\]
When $n \geq 4$, 
		\[ 
		\| \NM \|_{L^p(\R^n) \to L^p(\R^n)} \lessapprox 
		\begin{cases}
			\delta^{2-\frac{3}{p}}, & 1\leq p\leq 4/3 
			\\
			\delta^{\frac{1}{2}-\frac{1}{p}}, &4/3 \leq p \leq 2
			\\
			1, & 2\leq p\leq \infty.
		\end{cases}
		\]
		
		\item\label{theorem:NM-upper-bounds-item:sharpness}
		The powers of $\delta$ in part~\ref{theorem:NM-upper-bounds-item:upper-bounds} are sharp for all $n \geq 2$ and all $1 \leq p \leq \infty$.
	\end{enumerate}
\end{theorem}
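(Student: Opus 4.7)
The plan is to prove part (i) by establishing three endpoint bounds and interpolating: $L^1 \to L^1$ with norm $\les \delta^{-1}$, $L^{p_0} \to L^{p_0}$ with norm $\less \delta^{-(1/p_0 - 1/2)}$ (where $p_0 = 3/2$ for $n = 3$ and $p_0 = 4/3$ for $n \geq 4$), and $L^2 \to L^2$ with norm $\less 1$; no intermediate endpoint is needed for $n = 2$. The trivial $L^\infty \to L^\infty$ bound of $1$ is immediate. For $L^1 \to L^1$, every shell $S^\delta(x+u)$ with $|u|=1$ lies inside $B(x, 2+\delta)$, giving the pointwise estimate $\NM f(x) \leq \frac{1}{|S^\delta(0)|}\int_{B(x,2+\delta)}|f| \les \delta^{-1}(|f|*\chi_{B(0,3)})(x)$, so Young's inequality yields $\|\NM f\|_1 \les \delta^{-1}\|f\|_1$.

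For the $L^2 \to L^2$ bound, which is the first nontrivial case, I would use Littlewood--Paley theory combined with Sobolev embedding on the sphere. Write $\NM f(x) = \sup_{|u|=1} A^\delta f(x+u)$, where $A^\delta$ denotes the normalized shell-average, and let $P_k$ denote the Littlewood--Paley projection to frequencies $|\xi| \sim 2^k$, for $1 \leq 2^k \lesssim \delta^{-1}$. Stationary phase applied to the Fourier transform of the shell-average kernel $K^\delta$ gives $|\wh{K^\delta}(\xi)| \les |\xi|^{-(n-1)/2}$, so $\|A^\delta P_k f\|_2 \les 2^{-k(n-1)/2}\|P_k f\|_2$. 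On the other hand, the Funk--Hecke expansion (combined with super-polynomial Bessel decay $J_\nu(z)$ for $\nu \gg z$) shows that a function $h$ of spatial frequency $\sim 2^k$ has spherical-harmonic expansion on any unit sphere concentrated at degrees $\lesssim 2^k$; combined with the critical Sobolev embedding $H^{(n-1)/2+\epsilon}(\S^{n-1}) \hookrightarrow L^\infty(\S^{n-1})$ this yields
\[
\sup_{|u|=1}|h(x+u)|^2 \les (2^k)^{n-1+2\epsilon}\int_{\S^{n-1}}|h(x+u)|^2\,d\sigma(u).
\]
Applying this to $h = A^\delta P_k f$, integrating in $x$ via Fubini, and combining with the multiplier bound produces $\bigl\|\sup_u |A^\delta P_k f(\cdot + u)|\bigr\|_2 \les (2^k)^{\epsilon} \|P_k f\|_2$. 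Summing (after Cauchy--Schwarz in $k$) over the $O(\log \delta^{-1})$ dyadic scales gives $\|\NM\|_{L^2 \to L^2} \less 1$.

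The intermediate $L^{p_0}$ endpoint is the main obstacle: the required loss $\delta^{-(1/p_0 - 1/2)}$ is not reachable by the $L^2$ Sobolev argument alone and needs a sharper input, most naturally an $L^{p_0}$-estimate for spherical averages combined with a discretization over the set of centers $x + \S^{n-1}$, in the spirit of the fractal local smoothing estimates for the wave equation advertised in the abstract. For the sharpness claims in (ii): the $L^p$ bound of $1$ for $p \geq 2$ is saturated by any smooth bump $f$ since $\NM f(x) \geq A^\delta f(x+u_0)$ for any fixed $u_0 \in \S^{n-1}$; the $L^1$ bound of $\delta^{-1}$ is saturated by $f = \chi_{B(0,\delta)}$, for which $\NM f \ges \delta^{n-1}$ on $B(0, 2-\delta)$ while $\|f\|_1 \asymp \delta^n$; and the intermediate $L^{p_0}$ bound should be saturated by a Knapp-type example, namely $f = \chi_T$ with $T$ the $\delta$-thickening of a spherical cap of angular radius $\sim \delta^{1/2}$, exploiting that many unit spheres are nearly tangent to $T$ and hence pick up large averages simultaneously.
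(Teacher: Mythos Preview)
Your overall architecture (interpolate between $L^1$, $L^2$, $L^\infty$ and an intermediate $L^{p_0}$ endpoint) matches the paper, and your $L^1$, $L^\infty$ arguments are fine. Your $L^2$ argument via Jacobi--Anger/spherical-harmonic localization plus Sobolev embedding on $\S^{n-1}$ is correct and is a legitimate alternative to the paper's two proofs of the $L^2$ bound (one via duality and the two-sphere intersection estimate $|S_i^\delta\cap S_j^\delta|\lesssim \delta^2/(|x_i-x_j|+\delta)$, and one via Littlewood--Paley plus a crude $\delta$-net discretization over $\S^{n-1}$). The discretization route is in fact simpler than yours: one just uses $\#T_j\lesssim 2^{j(n-1)}$ points on the sphere and the locally-constant property of $A^\delta P_k f$, which already gives $\|\sup_u|A^\delta P_k f(\cdot+u)|\|_2\lesssim 2^{k(n-1)/2}\|A^\delta P_k f\|_2$ without any spectral theory on $\S^{n-1}$.

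The genuine gap is the $L^{p_0}$ endpoint, which you acknowledge but do not prove. Your suggestion to use ``fractal local smoothing for the wave equation'' is not how the paper proceeds, and it is not clear that the available fractal local smoothing estimates would actually give the precise exponent $\delta^{-(1/p_0-1/2)}$ (e.g.\ $\delta^{-1/6}$ at $p=3/2$ in $\R^3$). The paper instead runs duality and reduces to purely geometric inequalities: for $n\geq 3$ one must bound
\[
\delta^{2n}\sum_{j,k}|S_i^\delta\cap S_j^\delta\cap S_k^\delta|\lesssim \delta^{5/2}\log(1/\delta),
\qquad
\delta^{3n}\sum_{j,k,\ell}|S_i^\delta\cap S_j^\delta\cap S_k^\delta\cap S_\ell^\delta|\lesssim \delta^{3}\log(1/\delta),
\]
where the $S_i$ are unit spheres through a $\delta$-separated set of points. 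These require a careful analysis of three-annulus intersections depending on the circumradius of the triangle of centers (the intersection is $\sim\delta^{5/2}$ in the ``tangential'' case $R\approx 1$ and $\sim\delta^3$ in the transverse case), and a counting lemma exploiting the Nikodym constraint $\omega_i\in S_i$. Your proposal contains none of this machinery.

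Your sharpness discussion is also incomplete. The $\delta$-ball example gives only $\delta^{(n-1)-n/p}$, which matches the theorem at $p=1$ but \emph{not} at the intermediate breakpoints for $n\geq 3$. Your Knapp cap is not the right object either. The paper needs several distinct examples: for $n=3$ a $\delta\times\delta\times\sqrt{\delta}$ tube (giving $\delta^{3/2-5/(2p)}$) and a ``cylindrical shell'' $\{0\}\times\S^{n-2}$ (giving $\delta^{1/2-1/p}$); for $n\geq 4$ the shell example together with a ``radius $1/\sqrt{2}$'' Lenz-type construction where all unit spheres centered on $\frac{1}{\sqrt{2}}\S^1\times\{0\}^{n-2}$ contain the circle $\{0\}^2\times\frac{1}{\sqrt{2}}\S^{n-3}$ (giving $\delta^{2-3/p}$). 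Without these you cannot verify (ii) beyond the endpoints $p=1,\infty$.
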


Part~\ref{theorem:NM-upper-bounds-item:upper-bounds} of \Cref{theorem:NM-upper-bounds} is proved in \Cref{section:NM}, while part~\ref{theorem:NM-upper-bounds-item:sharpness} follows from  \Cref{prop:NDeltaLowerBd}. The proof of the upper bounds relies on geometric estimates on the intersections of $\delta$-annuli of unit spheres; see \Cref{sub:AppA}.

\begin{remark}
\label{remark:same-after-4}
In \Cref{theorem:NM-upper-bounds}, the sharp bound $\delta^{\frac{1}{2}-\frac{1}{p}}$ holds for $\frac{n}{n-1} \leq p \leq 2$ for dimensions $n=2,3,4$. From this observation, one might conjecture that this pattern continues in dimensions $n \geq 5$, but that is false. This is related to the following phenomenon in $\R^n$ for $n \geq 4$: the sets $A := \frac{1}{\sqrt{2}} \S^{1} \times \{0\}^{n-2}$ and $B := \{(0,0)\} \times \frac{1}{\sqrt{2}} \S^{n-3}$ have the property that every unit sphere centered at a point in $A$ contains all of the set $B$. This is explained in more detail in \Cref{prop:NDeltaLowerBd} and \Cref{remark:lenz}.
\end{remark}

Although a Nikodym set for unit spheres can be small in the sense of Lebesgue measure, it must be large in the sense of Hausdorff dimension. Indeed, by a standard argument (see, e.g., \cite[Lemma 11.9]{wolff2003lectures} or \cite[Theorem 22.9]{Mattila2015}), \Cref{theorem:NM-upper-bounds}\ref{theorem:NM-upper-bounds-item:upper-bounds}  for the case $p=2$ implies that the Hausdorff dimension of any Nikodym set for unit spheres in $\R^n$ must be $n$. In fact, we show the same is true for Nikodym sets for spheres (not necessarily associated with unit spheres):

\begin{theorem}\label{theorem:nikodym-dimension}
	Any Nikodym set for spheres in $\R^n$ must have the Hausdorff dimension $n$.
\end{theorem}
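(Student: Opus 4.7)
The plan is to pigeonhole the radii of the relevant spheres, rescale to reduce to a bounded range of radii, and then apply the standard maximal-function argument (as in \cite[Theorem~22.9]{Mattila2015} or \cite[Lemma~11.9]{wolff2003lectures}) deriving a Hausdorff-dimension bound from an $L^2$ maximal estimate.

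Let $A$ be a Nikodym set for spheres. By definition, the set $E_0 \subset \R^n$ of points $y$ for which some sphere $S_y \ni y$ satisfies $\mathcal{H}^{n-1}(A \cap S_y) > 0$ has positive Lebesgue measure. A measurable selection provides Borel maps $y \mapsto (c_y, \rho_y)$ on $E_0$ giving the center and radius of such an $S_y$. Decomposing $E_0$ according to the dyadic scales of $\rho_y$ and of $\mathcal{H}^{n-1}(A \cap S_y)$ and invoking countable additivity, one obtains a Borel subset $E \subset E_0$ of positive Lebesgue measure, a dyadic $R>0$, and a constant $c>0$, such that $\rho_y \in [R, 2R]$ and $\mathcal{H}^{n-1}(A \cap S_y) \geq c$ for every $y \in E$. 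Dilating $\R^n$ by $1/R$ preserves the Hausdorff dimension of $A$, so we may assume $R = 1$, i.e., $\rho_y \in [1, 2]$.

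The key technical input is the $L^2$ bound
\[
    \| \widetilde{\mathcal{N}}^\delta \|_{L^2(\R^n)\to L^2(\R^n)} \lessapprox 1, \qquad \widetilde{\mathcal{N}}^\delta f(x) := \sup_{u \in \S^{n-1},\, r \in [1, 2]} \frac{1}{|S_r^\delta(0)|} \left| \int_{S_r^\delta(0)} f(x - ru + z)\, dz \right|,
\]
i.e., the extension of \Cref{theorem:NM-upper-bounds}\ref{theorem:NM-upper-bounds-item:upper-bounds} at $p=2$ to spheres whose radii vary in $[1,2]$. I expect this extension to be the main obstacle: a naive discretization of $r$ at scale $\delta$ loses a factor of $\delta^{-1/2}$, which is too much. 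Instead one must incorporate $r$ directly into the geometric estimates on intersections of $\delta$-annuli from \Cref{sub:AppA}; since all curvatures and transversalities are comparable for $r\in [1,2]$, the proof of \Cref{theorem:NM-upper-bounds}\ref{theorem:NM-upper-bounds-item:upper-bounds} should go through with only the same $\delta^{-\epsilon}$ loss.

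Granting this bound, suppose for contradiction that $\dim_H A < n$ and fix $s \in (\dim_H A, n)$. For any $\eta > 0$ there is a cover $A \subset \bigcup_i B(x_i, r_i)$ with $r_i \leq 1$ and $\sum_i r_i^s \leq \eta$. Group dyadically by $I_j := \{i : r_i \in (2^{-j-1}, 2^{-j}]\}$ and set $f_j := \mathbf{1}_{\bigcup_{i \in I_j} B(x_i, 2^{-j+1})}$, so that $\|f_j\|_2^2 \lesssim \eta \cdot 2^{-j(n-s)}$. For each $y \in E$, writing $a_j(y) := \mathcal{H}^{n-1}(S_y \cap \bigcup_{i \in I_j} B(x_i, r_i))$, we have $\sum_j a_j(y) \geq c$; moreover, thickening in the normal direction of $S_y$ yields $\widetilde{\mathcal{N}}^{2^{-j}} f_j(y) \gtrsim a_j(y)$, since the $2^{-j}$-neighborhood of the relevant subset of $S_y$ has $n$-volume $\gtrsim a_j(y)\cdot 2^{-j}$ and is contained both in the support of $f_j$ and in $S_y^{2^{-j}}$, while $|S_y^{2^{-j}}| \lesssim 2^{-j}$. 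Applying the $L^2$ bound scale-by-scale, summing in $j$ by Minkowski, and fixing $\epsilon < (n-s)/2$ so that the geometric series $\sum_j 2^{-j((n-s)/2 - \epsilon)}$ converges, one arrives at $c\,|E|^{1/2} \leq C_\epsilon\, \eta^{1/2}$. Letting $\eta \to 0$ contradicts $|E|>0$, so $\dim_H A = n$.
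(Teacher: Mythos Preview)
Your approach is the paper's: reduce to radii in $[1,2]$ by pigeonholing and rescaling, invoke the bound $\|\NS\|_{L^p\to L^p}\lessapprox 1$ for some finite $p$, and run the standard argument. For $n\ge 3$ your expected $L^2$ bound is exactly \Cref{thm:NS3}, and your heuristic is correct there: the two-annulus bound \eqref{eq:intersection-2-spheres-vary-special} holds for varying radii when $n\ge 3$, so the proof of the $L^2$ case of \Cref{theorem:NM-upper-bounds}\ref{theorem:NM-upper-bounds-item:upper-bounds} does carry over essentially unchanged.

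The gap is at $n=2$. The $L^2$ bound you invoke is \emph{false}: by \Cref{prop:NSLowerBd} one has $\|\NS\|_{L^2(\R^2)\to L^2(\R^2)}\gtrsim \delta^{-1/4}$. Your heuristic that ``all curvatures and transversalities are comparable for $r\in[1,2]$'' overlooks internal tangencies: once radii are allowed to differ, two circles with $|S_i-S_j|\sim 1$ can be internally tangent, and then $|S_i^\delta\cap S_j^\delta|\sim\delta^{3/2}$ rather than $O(\delta^2/d)$---this is precisely the obstruction noted after \Cref{prop:NS3dual}. Feeding the resulting $\delta^{-1/4}$ loss into your summation over scales would only yield $\dim_H A\ge 3/2$. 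The paper instead proves $\|\NS\|_{L^3(\R^2)\to L^3(\R^2)}\lessapprox 1$ (\Cref{thm:NS2}), which requires the much deeper circle-tangency machinery of Wolff and Pramanik--Yang--Zahl (\Cref{prop:M2}); the standard Hausdorff-dimension argument then runs with $L^3$ in place of $L^2$.
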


To prove \Cref{theorem:nikodym-dimension}, we need to consider a larger maximal function $\NS$ (see \eqref{eq:NS-def}), where each average is taken over the $\Theta(\delta)$-neighborhood 
of every sphere through $x$ of radius $t \in [1,2]$. 
Again, by the standard argument mentioned above, an estimate $\| \NS \|_{L^p(\R^n) \to L^p(\R^n)} \lessapprox 1$ for some $p<\infty$ implies \Cref{theorem:nikodym-dimension}. Thus, \Cref{theorem:nikodym-dimension} is a consequence of the following sharp estimates for the $L^p$-operator norm of $\NS$.

\begin{theorem}
	\label{theorem:NS-upper-bounds}
	\ 
	\begin{enumerate}[label=(\roman*)]
		\item\label{theorem:NS-upper-bounds-item:upper-bounds}
		When $n=2$, 
		\[
		\| \NS \|_{L^p(\R^2) \to L^p(\R^2)} \lessapprox 
		\begin{cases}
			\delta^{\frac{1}{2} - \frac{3}{2p}}, \quad &  1\leq p\leq 3
			\\
			1,& 3\leq p\leq \infty.
		\end{cases}
		\] 
When $n \geq 3$, 
		\[ 
		\| \NS \|_{L^p(\R^n) \to L^p(\R^n)} \lessapprox 
		\begin{cases}
			\delta^{1-\frac{2}{p}}, \quad &1 \leq p \leq 2
			\\
			1, & 2\leq p\leq \infty.
		\end{cases}
		\]
		
		\item\label{theorem:NS-upper-bounds-item:sharpness}
		The powers of $\delta$ in part~\ref{theorem:NS-upper-bounds-item:upper-bounds} are sharp for all $n \geq 2$ and all $1 \leq p \leq \infty$.
	\end{enumerate}
\end{theorem}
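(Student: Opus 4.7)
The plan is to reduce the entire range of $p$ to a single endpoint estimate by interpolating with trivial bounds at $p = 1$ and $p = \infty$. At $p = \infty$, $\|\NS\|_{L^\infty \to L^\infty} \le 1$ is immediate since $\NS f(x)$ is a supremum of averages of $f$. For $L^1$, the pointwise bound $\NS f(x) \le \int K^*(x, y) |f(y)|\,dy$ with $K^*(x,y) \lesssim \delta^{-1}\mathbf{1}_{|y-x| \le 4}$ (obtained by taking the supremum of the averaging kernel over the parameters $(u, t)$), followed by Fubini in $y$, yields $\|\NS\|_{L^1 \to L^1} \lesssim \delta^{-1}$. Interpolating this with an endpoint bound $\|\NS\|_{L^{p_0} \to L^{p_0}} \less 1$, where $p_0 = 3$ if $n = 2$ and $p_0 = 2$ if $n \ge 3$, reproduces the full range of part~(i).

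For the endpoint $p_0 = 2$ in dimension $n \ge 3$, my plan is a fundamental-theorem-of-calculus / square-function argument in $t$, in the spirit of Stein's proof of $L^2$-boundedness for the spherical maximal function. The one-dimensional Sobolev embedding $\sup_{t \in [1,2]} |F(t)|^2 \lesssim \int_1^2 (|F|^2 + |F'|^2)\,dt$, applied for each fixed $u$ to $F(t) = (f * \eta_t^\delta)(x + tu)$ and then combined with $\sup_u \int \le \int \sup_u$, reduces the task to $L^2_x$-bounds at each fixed $t \in [1, 2]$ for $\sup_u |(f * \eta_t^\delta)(x + tu)|$ and $\sup_u |\partial_t[(f * \eta_t^\delta)(x + tu)]|$. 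The first is (a mild rescaling of) $\NM f(x)$ and is bounded in $L^2$ by \Cref{theorem:NM-upper-bounds}. For the derivative term, I would exploit the stationary-phase bound $|\widehat{\partial_t \eta_t^\delta}(\xi)| \lesssim (1 + |\xi|)^{(3-n)/2}$, which is bounded for $n \ge 3$ in the relevant frequency range $|\xi| \lesssim \delta^{-1}$; applying the same Nikodym-type argument to this modified kernel yields the analogous $L^2$ control.

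For the endpoint $p_0 = 3$ in $n = 2$, the same FTC reduction produces $L^3$-integrals of $\NM$-type and $\partial_t$-kernel terms. The leading $\NM$ piece is bounded at $p = 3$ by \Cref{theorem:NM-upper-bounds}. The derivative piece is the main technical obstacle, since in the plane $|\widehat{\partial_t \eta_t^\delta}(\xi)|$ grows as $|\xi|^{1/2}$: a naive $L^2$ analysis would lose a factor $\delta^{-1/2}$, and recovering the $\delta^{-\epsilon}$ loss requires a Wolff-type $L^3$ incidence estimate for pairs of $\delta$-thickened circles of comparable radii, exploiting that two such circles intersect in area $\sim \delta^{3/2}$ when tangent and $\sim \delta^2$ when transverse.

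For part~(ii), sharpness is witnessed by explicit test functions. At $p = \infty$, $f \equiv 1$ gives $\NS f \equiv 1$. At $p = 1$, the normalized bump $f = |B(0, \delta)|^{-1} \mathbf{1}_{B(0, \delta)}$ produces $\NS f(x) \sim \delta^{-1}$ on the set of $x$ for which some sphere through $x$ of radius in $[1, 2]$ passes within $\delta$ of the origin (a set of measure $\sim 1$), giving the matching $\delta^{-1}$ lower bound. At $p = p_0$, the trivial lower bound $\|\NS\|_{p_0 \to p_0} \gtrsim 1$ from a single averaging operator matches the upper bound $\less 1$. Sharpness at intermediate $p \in (1, p_0)$ is obtained from Nikodym/Kakeya-type $\delta$-configurations of thickened spheres of appropriate codimension, with parameters tuned to the exponent $\delta^{1/2 - 3/(2p)}$ (resp.\ $\delta^{1 - 2/p}$) claimed in the theorem.
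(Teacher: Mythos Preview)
Your reduction by interpolation to a single endpoint ($p_0=2$ for $n\ge 3$, $p_0=3$ for $n=2$) is correct, and the trivial $L^1,L^\infty$ bounds are fine.  The problem is the endpoint argument itself.

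\textbf{The gap for $n\ge 3$.}  After your Sobolev step in $t$ and the swap $\sup_u\!\int_t\le\int_t\!\sup_u$, you need
\[
\big\|\sup_{u\in\S^{n-1}}\big|\partial_t\big[(f*\eta_t^\delta)(\cdot+tu)\big]\big|\big\|_{L^2}\;\lessapprox\;\|f\|_{L^2}
\]
for each fixed $t\in[1,2]$.  This fails by a full power of $\delta^{-1}$.  Smoothly, $\partial_t\eta_t^\delta(y)=-\delta^{-2}\rho'((|y|-t)/\delta)$, i.e.\ the $t$--derivative kernel has the same annular support as $\eta_t^\delta$ but is $\delta^{-1}$ times larger in $L^1$ (and in $L^\infty$).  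Equivalently, at frequency $|\xi|\sim 2^j$ you correctly have $|\widehat{\partial_t\eta_t^\delta}(\xi)|\lesssim 2^{j(3-n)/2}$, but the Nikodym supremum over $u\in\S^{n-1}$ costs $2^{j(n-1)/2}$ (this is exactly what the $\NM$ machinery gives, whether via duality/intersection or via the Fourier discretisation in Proposition~4.3); the product is $2^j$, and summing $j\le\log(1/\delta)$ yields $\delta^{-1}$, not $\delta^{-\epsilon}$.  The chain--rule piece $u\cdot\nabla(f*\eta_t^\delta)(\cdot+tu)$ has the same Fourier size and the same loss.  There is no cancellation to exploit once you have taken $\sup_u$ inside the $t$--integral.

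\textbf{What the paper does instead.}  The paper never separates the $t$-- and $u$--suprema.  It runs the duality argument (Lemma~2.3) directly on $\NS$: a $\delta$--net $\{\omega_i\}\subset\R^n$ produces spheres $S_i=S(x_i,t_i)$, and the key observation (Lemma~3.3) is that these spheres form a \emph{one--dimensional} family in the parameter space $\R^n\times[1,2]$, i.e.\ $\#\{i:(x_i,t_i)\in B_\rho\}\lesssim\delta^{-n}\rho$ for every ball $B_\rho\subset\R^{n+1}$.  For $n\ge 3$ this counting plus the two--annulus bound $|S_i^\delta\cap S_j^\delta|\lesssim\delta^2/(|S_i-S_j|+\delta)$ (valid for varying radii) gives $\|\sum_ia_i1_{S_i^\delta}\|_2$ directly, with no derivative loss.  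For $n=2$ that intersection bound fails (internally tangent circles), and the paper invokes the Pramanik--Yang--Zahl $L^{3/2}$ incidence estimate (Proposition~3.5) for the one--dimensional family, again via duality in $\R^3$.  Your Sobolev reduction produces a different object (a pointwise derivative of the averaging kernel, then a Nikodym sup), which is strictly harder than the original problem.

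\textbf{Sharpness.}  Your $\delta$--ball example gives $\|\NS\|_{p\to p}\gtrsim\delta^{n-1-n/p}$, which matches only at $p=1$; for $1<p\le p_0$ it is strictly weaker than $\delta^{1/2-3/(2p)}$ ($n=2$) and $\delta^{1-2/p}$ ($n\ge 3$).  Lower bounds do not interpolate, so a single example valid across the whole range is needed: the paper uses a $\sqrt\delta$--tube for $n=2$ ($E=\{0\}\times[0,\sqrt\delta]$, centers $[1,2]\times[0,\sqrt\delta]$, $r(z)=z_1$) and a ``radius $1/\sqrt2$'' configuration for $n\ge 3$ ($E=\{0\}\times\tfrac1{\sqrt2}\S^{n-2}$, centers on a segment of the $x_1$--axis with $r(z)=\sqrt{z_1^2+1/2}$), each of which gives the sharp exponent for all $p$.
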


Part~\ref{theorem:NS-upper-bounds-item:upper-bounds} of \Cref{theorem:NS-upper-bounds} is proved in Section \ref{sec:NS}, while part~\ref{theorem:NS-upper-bounds-item:sharpness} follows from  \Cref{prop:NSLowerBd}.  In fact, we prove a more general result for maximal operators obtained by replacing the supremum over ${u\in \S^{n-1}}$ in the definition of $\NS$ by the supremum over ${u\in T}$ for compact sets $T\subset \R^n$ with finite $(n-1)$-dimensional upper Minkowski content (see \eqref{eqn:coverassume} for a definition). The same holds for the case of $\NM$ and \Cref{theorem:NM-upper-bounds}; see \Cref{remark:NT-same-bounds-as-NM}.

We sketch the proof of the $L^3(\R^2)$-estimate for $\NS$. By duality, it suffices to get a good bound on $\| \sum_{i} 1_{C_i^\delta} \|_{L^{3/2}(\R^2)}$ for a certain collection of circles $\{C_i\}$ of radius comparable to 1, where $C_i^\delta$ denotes the $\delta$-neighborhood of $C_i$. Under the identification of a circle $\{ x\in \R^2: |x-y|=t \}$ with the point $(y,t) \in \R^2 \times (0,\infty)$, we show that the collection of circles satisfies the following condition:
\begin{equation}\label{eqn:non-concentration}
   \#\{ i: C_i \in B \} \les \delta^{-2} r \qquad\text{for any ball $B\subset \R^3$ of radius $r\geq \delta$}
\end{equation}
(see \Cref{lem:count}). The $L^{3/2}$ estimate is then covered by a recent result of Pramanik-Yang-Zahl \cite[Theorem 1.7]{Pramanik-Yang-Zahl}. See \Cref{prop:M2} for a slightly more general version of their result specialized to circles to be used for the proof, and see \Cref{thm:avg_frac} for a version of \Cref{prop:M2} in terms of the wave equation.

We note that the maximal function studied by Kolasa and Wolff \cite{Kol_Wol, Wolff_Circ} 
\begin{equation}\label{eqn:Wdelta}
  W^\delta f(t) = \sup_{x\in \R^n} \frac{1}{|S^{\delta}(0)|} \int_{S^\delta(0)} |f(x- t y)|dy, \; t\in [1,2]  
\end{equation}
satisfies the bound $\| W^\delta \|_{L^p(\R^n) \to L^p([1,2])} \lessapprox  1$ precisely when $\| \NS \|_{L^p(\R^n) \to L^p(\R^n)} \lessapprox  1$. This is not a coincidence. It has to do with the fact that a duality argument for $W^\delta$ also produces a ``one-dimensional" set of circles $\{C_i\}$ in the sense that it satisfies \eqref{eqn:non-concentration} with $\delta^{-1} r$ in place of $ \delta^{-2} r $.\footnote{The ``one" in ``one-dimensional" refers to the exponent of $r$ in \eqref{eqn:non-concentration}} In \Cref{sec:maxspheres}, we discuss a general maximal theorem associated with spheres which covers both $W^\delta$ and $\NS$ under the same framework. Regarding Wolff's $L^3$-bound for $W^\delta$ for the case $n=2$ \cite{Wolff_Circ}, see also \cite{WoL, Schlag_incidence, Zahl_Circular, Zahl, Pramanik-Yang-Zahl} for different proofs and extensions to more general curves.

\subsection{Uncentered spherical maximal functions}\label{sec:uncentered}
For a compact set $T\subset \R^n$, recall the definition of $\ST$ given in \eqref{eq:ST-def}. When $T=\{0\}$, $\ST$ is the (classical) spherical maximal function,
\begin{align}
\label{eq:spherical-maximal-Sf}
Sf(x) = \sup_{t>0} \left| \int_{\S^{n-1}} f(x + ty) \, d\sigma(y) \right |, 
\end{align}
which is known to be bounded on $L^p(\R^n)$ if and only if $p>\frac{n}{n-1}$, thanks to the seminal works of Stein \cite{Ste_Max} (when $n\geq 3$) and Bourgain \cite{Bou_Max} (when $n=2$). For $n=2$, the same bound holds for the case $T=\{u\}$ for any $u \in S^{1}$, which is a consequence of Sogge’s generalization \cite{Sogge_lo} of Bourgain’s circular maximal theorem.

If $T= \S^{n-1}$, $\ST$ is not bounded on $L^p(\R^n)$ for any $p<\infty$ due to the existence of Nikodym sets for unit spheres. 
In fact, for the same reason, the maximal function $\MT$ (see \eqref{eq:MT-def}), defined without the supremum over $t>0$, is unbounded on $L^p(\R^n)$ for any $p<\infty$.

Some positive results can be obtained for both $\MT$ and $\ST$ when $T$ is in between these two extreme cases, with the range of boundedness dependent on the dimension of $T$. In particular, we show that $\MT$ and $\ST$ are bounded on $L^p$ for some finite $p$ if there exists some $0\leq s<n-1$ such that $T$ has \emph{finite $s$-dimensional upper Minkowski content}, i.e.,
\begin{equation}\label{eqn:coverassume}
	N(T,\delta) \les \delta^{-s}, \, \text{ for all } \, \delta\in (0,1/2)
\end{equation}
where $N(T,\delta)$ denotes the $\delta$-covering number, the minimal number of balls of radius $\delta$ needed to cover $T$. 

\begin{remark}
The condition \eqref{eqn:coverassume} implies that $T$ has upper Minkowski dimension at most $s$. Conversely, if the upper Minkowski dimension of $T$ is $d$, then \eqref{eqn:coverassume} holds for all $s < d$. See, e.g., \cite[Chapter 5]{mattila95} for the definitions of upper Minkowski dimension and content. We do not need them in this paper.
\end{remark}

\begin{theorem}\label{thm:MT}
	Let $n\geq 2$ and $0\leq s<n-1$. Suppose that $T\subset \R^n$ is a compact set with finite $s$-dimensional upper Minkowski content (see \eqref{eqn:coverassume}). Then $\MT$ is bounded on $L^p(\R^n)$ 
\begin{enumerate}[label=(\roman*)]
	\item when $n=2$ and  
	\[p> 1 + s,\]
	\item when $n=3$ and 
    \[ p> 1+ \min\left(\frac{s}{2}, \frac{1}{3-s}, \frac{5-2s}{9-4s}\right), \]
\item when $ n\geq 4$ and
    \[ p> 1+ \min\left(\frac{s}{n-1},\frac{1}{n-s}, \frac{n-s}{3(n-s)-2}\right). \]
\end{enumerate}
\end{theorem}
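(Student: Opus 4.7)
The plan is to combine a Littlewood--Paley frequency decomposition with a $\delta$-discretization of $T$ driven by \eqref{eqn:coverassume}. Writing $f = P_{\le 0}f + \sum_{k\ge 1}P_k f$ with standard Littlewood--Paley projections, the low-frequency piece $\MT(P_{\le 0}f)$ is dominated pointwise by the Hardy--Littlewood maximal function and is harmless on every $L^p$. For $k \ge 1$ set $\delta = 2^{-k}$ and pick a $\delta$-net $T_\delta \subset T$ with $\#T_\delta \les \delta^{-s}$. Since $A P_k f$ has Fourier support in $\{|\xi|\les 2^k\}$, a Bernstein-type estimate inside each $\delta$-ball replaces the supremum over $u \in T$ by one over $u \in T_\delta$, at the cost of a maximal operator uniformly bounded on every $L^p$. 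Bounding the discrete supremum by an $\ell^p$-sum and using translation invariance yields
\[
\|\MT P_k f\|_{L^p} \;\les\; (\#T_\delta)^{1/p}\,\|A P_k f\|_{L^p} \;\les\; 2^{ks/p}\,\|A P_k\|_{L^p\to L^p}\,\|f\|_{L^p},
\]
so it suffices to show $\|A P_k\|_{L^p\to L^p}\les 2^{-k(s/p+\varepsilon)}$ for some $\varepsilon>0$ and sum the resulting geometric series in $k$.

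The three terms in the minimum correspond to three competing bounds on $\|A P_k\|_{L^p\to L^p}$, each sharpest in a different regime of $(n,s,p)$. The first, from the stationary phase decay $|\widehat{d\sigma}(\xi)|\les |\xi|^{-(n-1)/2}$ combined with interpolation, gives $\|A P_k\|_{L^p\to L^p}\les 2^{-k(n-1)(1-1/p)}$ for $p\le 2$, hence the threshold $p>1+s/(n-1)$; this is already enough in the plane. The second, for $n\ge 3$, uses an $L^p\to L^q$ improving estimate for the spherical convolution combined with a Sobolev embedding against the $s$-dimensional structure of $T$, producing the threshold $p>1+1/(n-s)$. The third converts \eqref{eqn:coverassume} into a Frostman-type measure on $T$ and invokes the Pramanik--Yang--Zahl incidence bound of \Cref{prop:M2} on the induced $s$-dimensional family of unit spheres $\{u+\S^{n-1}\}_{u\in T}$---precisely the fractal local smoothing framework of \Cref{sec:maxspheres}---to yield the threshold $p>1+(n-s)/(3(n-s)-2)$. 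Taking the best of the three decays and summing in $k$ completes the proof.

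The principal technical obstacle is the third bound: one must verify that the $\delta$-discretized family of spheres $\{u+\S^{n-1}\}_{u\in T_\delta}$ satisfies a non-concentration condition of the type \eqref{eqn:non-concentration} with the right exponent (dictated by $s$) and then apply \Cref{prop:M2}. This is precisely where the local smoothing estimate for fractal measures promised in the abstract enters the picture; the first and second bounds are comparatively routine. A separate, easier argument is needed for $n=2$, where only the stationary-phase bound appears.
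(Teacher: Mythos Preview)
Your first bound (stationary phase plus $\delta$-net) is correct and matches the paper exactly; this is \Cref{prop:AVG}, first estimate, and it already handles $n=2$. The second bound has the right threshold $p>1+1/(n-s)$, but the paper's mechanism is simpler than what you describe: the pointwise estimate $|\psi_j*\sigma(x)|\les 2^j(1+|x|)^{-N}$ immediately gives $\sup_{u\in T}|f*\psi_j*\sigma(\cdot+u)|\les 2^j M_{HL}f$, hence an $L^1$ bound with constant $2^j$, which is then interpolated with the $L^2$ bound coming from the first argument. No $L^p\to L^q$ smoothing or Sobolev embedding is needed.

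The genuine gap is your third bound. \Cref{prop:M2} is a statement about \emph{circles in the plane} ($n=2$) and has no analogue here for unit spheres in $\R^n$, $n\ge 3$; moreover the fractal local smoothing machinery of \Cref{sec:maxspheres} and \Cref{sec:duality} is used in the paper for $\ST$ (with dilations), not for $\MT$. The third threshold actually comes from a completely different source: the geometric intersection estimates of \Cref{lemma:geometric-estimates-intersections-234} for three unit spheres ($n=3$) and four unit spheres ($n\ge 4$), which yield the $L^{3/2}$ and $L^{4/3}$ bounds \eqref{eq:L32-bound}--\eqref{eq:L43-bound} for $\NT$ via \Cref{remark:NT-same-bounds-as-NM}. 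These are transferred to frequency-localized pieces of $\MT$ through \Cref{lem:compare}, and then interpolated with the $p=2$ case of the first bound to give \Cref{prop:AVG-using-NT}. So the non-concentration you need is not the one-dimensional condition \eqref{eqn:non-concentration} but rather \eqref{eq:lemma:basic-sum-estimate:counting}, and the input is the volume bounds of \Cref{lem:three_spheres} on $|S_i^\delta\cap S_j^\delta\cap S_k^\delta|$, not an incidence theorem for a fractal family.
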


We prove \Cref{thm:MT} in \Cref{section:NT}. 
The following proposition, proved in \Cref{sec:lowerboundMT}, contains necessary conditions for the boundedness of $\MT$.

\begin{proposition}\label{prop:lowerboundMT}
	Let $n\geq 2$ and $0\leq s <n-1$. Suppose that $\MT$ is bounded on $L^p(\R^n)$ for all compact sets $T$ satisfying \eqref{eqn:coverassume}. Then 
	\begin{equation*}
		p \geq 1+  
		\begin{cases}
			\frac{s}{n-1}, & 0\leq s\leq 1 \\
			\max\left(\frac{1}{n-1},\frac{s}{2n-3}, 3 - 2(n-s)\right),  & 1 < s \leq 2 \\
			\max\left(\frac{2}{2n-3},\frac{s+1-\ceil{s}}{n+1-\ceil{s}}, \frac{1}{n-\lfloor s\rfloor+1}, 3 - 2(n-s)\right), & 2 <  s < n-1.
		\end{cases}
	\end{equation*}
\end{proposition}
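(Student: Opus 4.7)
\smallskip

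\textbf{Proof plan.} For each lower-bound expression in the maximum, I would exhibit an explicit sequence $(T_\delta, f_\delta)$ with $N(T_\delta, \delta) \les \delta^{-s}$ such that $\|\MT f_\delta\|_p / \|f_\delta\|_p \to \infty$ as $\delta\to 0$ whenever $p$ is strictly below the claimed threshold; a single counterexample per term suffices, since the hypothesis asks for boundedness uniform over all admissible $T$. A useful baseline is $f_\delta = \mathbf{1}_{B(0,\delta)}$, for which $\|f_\delta\|_p \sim \delta^{n/p}$ and $A_1 f_\delta(y) \gtrsim \delta^{n-1}$ on the annular shell $|y|\in [1-\delta, 1+\delta]$. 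Consequently $\MT f_\delta(x) \gtrsim \delta^{n-1}$ precisely on the $\delta$-neighborhood of the Minkowski sum $-T_\delta+\S^{n-1}$, and the measure of this neighborhood governs the ratio in each case.

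\smallskip

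\textbf{The $s/(n-1)$ and $1/(n-1)$ terms.} For $s/(n-1)$ (binding when $s\leq 1$), take $T_\delta$ to be $\sim\delta^{-s}$ equispaced points on a short segment. Then $-T_\delta+\S^{n-1}$ is essentially $(s+n-1)$-dimensional, with $\delta$-neighborhood of measure $\sim\delta^{1-s}$, and the ratio $\delta^{(n-1)-(s+n-1)/p}$ blows up precisely for $p<1+s/(n-1)$. For $1/(n-1)$ (binding for $s>1$), instead take $T_\delta$ to be a $\delta$-net on $\S^{n-1}$ of cardinality $\sim\delta^{-s}$; every unit sphere centered at $T_\delta$ then passes through the origin, so $\MT f_\delta \gtrsim \delta^{n-1}$ on a set of $\delta$-independent measure, yielding the classical spherical-maximal threshold $p\geq n/(n-1)$. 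The exponents $1/(n-\lfloor s\rfloor+1)$ and $(s+1-\lceil s\rceil)/(n+1-\lceil s\rceil)$ are obtained by the same scheme but with $T_\delta$ placed on a lower-dimensional subsphere $V\cap \S^{n-1}$ for an affine subspace $V$ of dimension $\lfloor s\rfloor+1$ or $\lceil s\rceil+1$; the reduced ambient dimension inside $V$ restricts the Minkowski sum to a set of smaller measure, yielding the precise powers.

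\smallskip

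\textbf{The Wolff-type $s/(2n-3)$ and Nikodym $3-2(n-s)$ terms.} For the Wolff-type term $s/(2n-3)$ (with cousin $2/(2n-3)$ when $s>2$), I would adapt the tangency-based lower bound underlying Wolff's $L^3$-estimate for the circular maximal function: arrange $\sim\delta^{-s}$ (or $\sim\delta^{-2}$) unit spheres tangent to a common hyperplane at points of an appropriately dimensioned $\delta$-net, reproducing his combinatorial incidence count. The denominator $2n-3=2(n-1)-1$ arises from the sharp count of nearly-tangent spheres, and balancing against the volume of their thickened union yields the exponent. For the Nikodym term $3-2(n-s)$, I would use a $\delta$-quantitative, finitary version of the construction of \Cref{theorem:nikodym-translations-sphere}: take $T_\delta$ to be a $\delta$-discretized parameter set for a near-Nikodym collection of unit spheres whose union concentrates in a region of measure $\sim\delta^{3-2(n-s)}$, and let $f_\delta$ be the indicator of (a thickening of) this concentration region; tracking the scale through a Cunningham--H\'era--Laczkovich-style iteration then delivers the exponent.

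\smallskip

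\textbf{Main obstacle.} The principal difficulty lies in extracting sharp quantitative content from the Nikodym construction for the $3-2(n-s)$ term, since \Cref{theorem:nikodym-translations-sphere} is proved purely qualitatively: one must either rework its proof keeping track of scales at each step, or transfer the lower bound via duality with the closely related operator $W^\delta$ from \eqref{eqn:Wdelta}. The Wolff-type term likewise requires adapting the tangency combinatorics of \cite{Wolff_Circ} to higher dimensions and to $T$ of fractional Minkowski dimension, which is not automatic. Finally, realizing the correct powers in the $\lceil s\rceil$ and $\lfloor s\rfloor$ terms demands a careful choice of the affine subspace $V$ and of the $s$-dimensional distribution of $T_\delta$ within $V$ to avoid oversaturating the Minkowski sum.
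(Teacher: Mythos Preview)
Your general framework---choose a test function $f_\delta = 1_{E^\delta}$, identify the set on which $\MT f_\delta$ is large, and compare volumes---is the same as the paper's. But two points diverge substantially.

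First, a minor issue: you let $T_\delta$ vary with $\delta$, but the hypothesis of the proposition is that $\MT$ is bounded for \emph{every} fixed compact $T$ with finite $s$-dimensional upper Minkowski content. To disprove boundedness you must exhibit a single $T$ (independent of $\delta$) for which the ratio blows up. The paper does this by fixing once and for all the self-similar Cantor-type set
\[
T = \{0\}^{n-\lceil s\rceil}\times C_s,\qquad C_s=(C_{s/\lceil s\rceil})^{\lceil s\rceil},
\]
which satisfies \eqref{eqn:coverassume} at every scale, and then varying only the test set $E$ and a ``set of centers'' $Z$. A $\delta$-net of cardinality $\delta^{-s}$ on $\S^{n-1}$, as you propose for the $1/(n-1)$ term, typically fails \eqref{eqn:coverassume} at intermediate scales $\delta\ll r\ll 1$.

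Second, and more importantly, you drastically overcomplicate the $s/(2n-3)$, $2/(2n-3)$, and $3-2(n-s)$ terms. These do \emph{not} require Wolff-type tangency combinatorics or any quantitative Nikodym construction; each comes from a completely elementary test set. With the Cantor $T$ above fixed:
\begin{itemize}
\item The term $s/(2n-3)$ (and $2/(2n-3)$ for $s>2$) comes from the ``tube'' $E=\{0\}^{n-1}\times[0,\sqrt\delta]$ with centers $Z=\S^{n-2}\times[0,\sqrt\delta]$: every unit sphere centered on $Z$ is nearly tangent to the $x_n$-axis along $E$, so $|E^\delta|\sim\delta^{n-1/2}$ and $|Z^\delta-T|\sim\delta^{1-\min(2,s)/2}$, and the exponent $s/(2n-3)$ drops out of a two-line volume computation. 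The denominator $2n-3$ is simply $2(\alpha-1)$ with $\alpha=n-\tfrac12$; it has nothing to do with incidence counts of tangent spheres.
\item The term $3-2(n-s)$ (relevant only for $s\ge n-2$) comes from the ``cylindrical shell'' $E=[0,\sqrt\delta]\times\S^{n-2}$ with $Z=[0,\sqrt\delta]\times\{0\}^{n-1}$: here $|E^\delta|\sim\delta^{3/2}$ and $|Z^\delta-T|\sim\delta^{n-1/2-s}$, again an elementary calculation.
\item The $\lceil s\rceil$ and $\lfloor s\rfloor$ terms come from the Lenz-type ``radius $1/\sqrt2$'' construction $E=\{0\}^{n-\lceil s\rceil+1}\times\tfrac1{\sqrt2}\S^{\lceil s\rceil-2}$, $Z=\tfrac1{\sqrt2}\S^{n-\lceil s\rceil}\times\{0\}^{\lceil s\rceil-1}$, together with monotonicity in $s$; this is a direct higher-dimensional analogue of the unit-distance example, not a subsphere placement of $T$.
\end{itemize}
In short, every lower bound in the proposition follows from four explicit pairs $(E,Z)$ and the single formula $\gamma=(\alpha-1)-(\alpha-\beta)/p$ where $|E^\delta|\sim\delta^\alpha$ and $|Z^\delta-T|\sim\delta^\beta$. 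Your proposed route through Wolff's $L^3$ argument and a finitary Nikodym set would, at best, recover some of these exponents by much harder means, and the ``main obstacles'' you identify are obstacles to a detour, not to the theorem.
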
 

\begin{remark}
For the range $1<s\leq 2$, the term $3-2(n-s)$ is negative for $n \geq 4$ and is relevant only for $n=3$.
\end{remark}

We note that the range of boundedness in \Cref{thm:MT} does not match with the necessary conditions of \Cref{prop:lowerboundMT} in general, but they do agree for the range $0\leq s\leq 1$ up to the endpoint. In particular, the bound for $n=2$ in \Cref{thm:MT} is essentially sharp. In \Cref{fig:MT}, we graph the two ranges in the case $n=5$ as an example.

\begin{figure}[h]
\centering
\includegraphics[page=1]{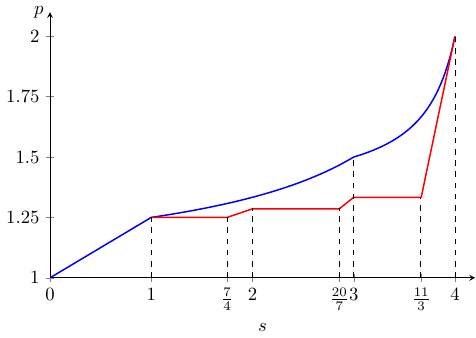}
\caption{Range of boundedness of $\MT$ in the case $n=5$. The blue and red lines indicate the boundaries of sufficient and necessary conditions, respectively.}
\label{fig:MT}
\end{figure}

In the case of $\ST$, which also includes dilations, we have the following bounds. 

\begin{theorem}\label{thm:ST} Let $n\geq 2$ and $0\leq s<n-1$. Suppose that $T\subset \R^n$ is a compact set with finite $s$-dimensional upper Minkowski content (see \eqref{eqn:coverassume}). Then $\ST$ is bounded on $L^p(\R^n)$ for
	\begin{align*}
		p >  
		\begin{cases}
			2+\min\left(1,\max\left(s,\frac{4s-2}{2-s}\right) \right), &n=2\\
			1+ \left[ n-1-s + \max(0, \min(1, (2s-n+3)/{4}))\right]^{-1}, &n\geq 3.
		\end{cases}
	\end{align*}
\end{theorem}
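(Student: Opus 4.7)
\textbf{Proof plan for \Cref{thm:ST}.} The plan is to linearize $\ST$ and assemble three bounds, one for each regime of the formula for $p$. The $s = 0$ endpoint $p > \tfrac{n}{n-1}$ comes from the classical spherical maximal theorem of Stein \cite{Ste_Max} and Bourgain \cite{Bou_Max}. The $s \to n-1$ endpoint with exponent $n-s$ comes from the $T$-analog of \Cref{theorem:NS-upper-bounds} mentioned in \Cref{remark:NT-same-bounds-as-NM}; when $n = 2$ one uses Wolff's $L^3$ circular maximal bound \cite{Wolff_Circ} instead, which accounts for the distinct two-dimensional formula involving $2 + \min(1, \max(s, (4s-2)/(2-s)))$. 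The intermediate regime with exponent $(3n-1-2s)/4$ comes from the fractal local smoothing / Pramanik--Yang--Zahl-type estimate \Cref{thm:avg_frac}.

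First I would perform a standard dyadic decomposition in $t$, rescale to $t \in [1,2]$, and pick measurable selectors $u(x) \in T$, $t(x) \in [1,2]$, producing the linearized operator
\[
L f(x) = \int_{\S^{n-1}} f(x + t(x)(u(x) + y)) \, d\sigma(y).
\]
After a Littlewood--Paley decomposition of $f$ and a thickening of $d\sigma$ at scale $\delta$, the task becomes uniform-in-$\delta$ boundedness of each frequency-localized piece, with summability provided by the stationary phase decay $|\widehat{d\sigma}(\xi)| \lesssim |\xi|^{-(n-1)/2}$. The linearization produces a family of spheres $\{(x + t(x)u(x), t(x))\}_{x \in \R^n}$ in $\R^n \times [1,2]$; covering $T$ by $\lesssim \delta^{-s}$ balls of radius $\delta$ yields a non-concentration estimate analogous to \eqref{eqn:non-concentration} with exponent depending on $s$, which I would feed into \Cref{thm:avg_frac} to obtain the middle-regime estimate.

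For the small-$s$ regime I would pointwise dominate $\ST f$ by $\lesssim \delta^{-s}$ translates of the classical spherical maximal function applied to $f$ and invoke Stein's theorem, producing the exponent $n-1-s$. For the large-$s$ regime I would apply the $T$-analog of \Cref{theorem:NS-upper-bounds} directly. Interpolating the three estimates and absorbing the $\delta^{-\epsilon}$ losses by taking $p$ strictly above the threshold yields the claimed range. The main obstacle is verifying the fractal non-concentration estimate rigorously for the family of spheres arising from the linearization, particularly because the center $x + t(x)u(x)$ and the radius $t(x)$ are coupled through the same variable $x$; the bookkeeping needed to feed this count cleanly into \Cref{thm:avg_frac} with the correct $s$-dependence is the heart of the argument, while the $n=2$ case additionally requires swapping out the $\NS$-type ingredient for Wolff's circular maximal bound.
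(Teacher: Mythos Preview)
Your overall architecture---Littlewood--Paley reduction, linearization via measurable selectors, and passing to a fractal measure on the space of spheres---is exactly what the paper does (\Cref{prop:reduce} and \Cref{cor:linearize}). The linearization step you flag as ``the main obstacle'' is handled cleanly in \Cref{cor:linearize}: the pushforward of Lebesgue measure under $x \mapsto (x+t(x)u(x),t(x))$ lands in $\mathcal{C}(n-s)$, using the covering assumption on $T$. So that part of your plan is sound.

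The gaps are in your identification of the inputs for the three regimes when $n\geq 3$. Your ``large-$s$'' ingredient, the $\NS_T$ bound from \Cref{theorem:NS-upper-bounds}, gives only $\|\NS_T\|_{L^2\to L^2}\lessapprox 1$ with \emph{no decay} in the frequency parameter $j$; you therefore cannot sum over $j$, and you certainly cannot reach the range $p>1+1/(n-s)$ which lies strictly below $2$. Likewise, your ``middle regime'' citation of \Cref{thm:avg_frac} is an $n=2$ statement and says nothing about the exponent $(3n-2s-1)/4$ in higher dimensions. In the paper, \emph{both} of these regimes for $n\geq 3$ come from the same source: the $L^2(\nu)$ fractal Strichartz estimate of Cho--Ham--Lee (\Cref{thm:HKL}, equation \eqref{eqn:fractalStrichartz}), which gives decay $2^{-j\min((\alpha-1)/2,(n+2\alpha-5)/8)}$ with $\alpha=n-s$. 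After interpolation with the weak $L^1$ bound of \Cref{lem:L1}, the two terms in that minimum produce exactly the exponents $n-s$ and $(3n-2s-1)/4$. Your ``small-$s$'' idea (discretize $T$, pay $2^{js/2}$ in $L^2$, then use the single-point spherical maximal $L^2$ decay $2^{-j(n-2)/2}$) does recover the exponent $n-1-s$ and is essentially equivalent to the elementary bound \eqref{eqn:fractalStrichartz2}; but that is the weakest of the three and cannot substitute for the other two.

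For $n=2$ the situation is also not quite as you describe: the formula $2+\min(1,\max(s,(4s-2)/(2-s)))$ comes from combining the Ham--Ko--Lee fractal local smoothing estimate \eqref{eqn:fractalStrichartz2d} (which gives the $\max(s,(4s-2)/(2-s))$ part) with the Pramanik--Yang--Zahl/Wolff-type input via \Cref{prop:alpha1} (which gives the ``$1$''). Bourgain's circular maximal theorem by itself does not enter directly.
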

We prove \Cref{thm:ST} in \Cref{sec:duality}.
We examine the lower bounds for $p$ given in \Cref{thm:ST} in two extreme cases: 
\begin{enumerate}
\item For $s = 0$, the lower bound for $p$ is $\frac{n}{n-1}$. This matches the critical exponent for the boundedness of the spherical maximal function $S$ (defined in \eqref{eq:spherical-maximal-Sf}).
\item In the limit $s \to n-1$, the lower bound for $p$ converges to $3$ for $n=2$ and $2$ for $n\geq 3$. This matches the critical exponents in \Cref{theorem:NS-upper-bounds}.
\end{enumerate}

We also obtain, in \Cref{section:ST-lower}, the following necessary conditions for $\ST$ to bounded on $L^p(\R^n)$.

\begin{proposition}\label{cor:lower}
	Let $n\geq 2$ and $0\leq s <n-1$. Suppose that $\ST$ is bounded on $L^p(\R^n)$ for all compact sets $T$ with finite $s$-dimensional upper Minkowski content. Then 
\begin{align*}
	p \geq 1+
    \begin{cases}
        \max(\frac{ 1-(\ceil{s}-s)/2 }{n-(\ceil{s}+2)/2} , \frac{1}{n-\floor{s}/2} ), &s \leq 2\\
        \max(\frac{1+s-\ceil{s}}{n-\ceil{s}}, \frac{1}{n-\floor{s}} ), & s \geq 2. 
    \end{cases}
\end{align*}
\end{proposition}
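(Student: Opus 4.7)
The plan is to prove \Cref{cor:lower} by constructing, for each admissible $s\in[0,n-1)$ and each $p$ failing the claimed inequality, a compact set $T\subset\R^n$ satisfying \eqref{eqn:coverassume} and a function $f\in L^p(\R^n)$ for which $\|\ST f\|_p/\|f\|_p$ blows up as a discretization parameter $\delta\to 0$. This will contradict any uniform $L^p$-bound for $\ST$ across such $T$, forcing the claimed lower bound on $p$. The two terms inside the $\max$ correspond to two independent families of examples, and the overall lower bound is the stronger of the two.

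For the first term, I would attempt a ``ball-and-sphere-packing'' family of examples. Take $f=\mathbf 1_{B(0,\delta)}$, for which $\|f\|_p\sim\delta^{n/p}$ and $\ST f(x)\gtrsim \delta^{n-1}$ on the $\delta$-neighborhood of the union $E_T:=\bigcup_{u\in T,\,t\in[1,2]} S(-tu,t)$. For $s=0$, $T=\{0\}$, this recovers the classical bound $p\ge n/(n-1)$. For $s>0$, choose $T$ of the form ``$\ceil{s}$-plane ball refined by $\lesssim \delta^{-(s-\floor{s})}$ transverse translates at scale $\delta$'' so that $N(T,\delta)\lesssim\delta^{-s}$ while $|N_\delta(E_T)|$ realizes an adversarial lower bound depending on $s$. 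Propagating the volume comparison through the $L^p$-inequality, with the dilation parameter $t\in[1,2]$ contributing an extra sphere parameter beyond those from $u\in T$, should produce the first term $\frac{1-(\ceil{s}-s)/2}{n-(\ceil{s}+2)/2}$ (or $\frac{1+s-\ceil{s}}{n-\ceil{s}}$ when $s\ge 2$).

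For the second term, I would use a tangent-slab family. Let $k=\floor{s}$, take $T$ to be a $k$-dimensional ball inside a $k$-plane through the origin (so $N(T,\delta)\lesssim\delta^{-k}\le\delta^{-s}$), and $f=\mathbf 1_E$ with $E$ a $\delta$-thin slab of codimension $m$ chosen depending on the regime $s\le 2$ versus $s\ge 2$. For each $x$ in a set of measure $\gtrsim 1$, use the dilation $t\in[1,2]$ together with $u\in T$ to arrange a sphere $S(x+tu,t)$ tangent (or, in the high-$s$ regime, transverse in extra directions) to $E$; the resulting spherical cap has normalized surface measure of order $\delta^{(n+m)/2-1}$, giving $\ST f(x)\gtrsim\delta^{(n+m)/2-1}$. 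Combining with $\|f\|_p\sim\delta^{m/p}$ and choosing $m$ consistent with the $k$ translation directions of $T$ plus the single dilation parameter should yield the second term $\frac{1}{n-\floor{s}/2}$ (or $\frac{1}{n-\floor{s}}$ for $s\ge 2$).

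The main obstacle is verifying that distinct parameter choices $(u,t)\in T\times[1,2]$ contribute essentially disjointly to $\ST f$ at scale $\delta$, so that the $L^p$-norm of $\ST f$ genuinely captures the pointwise lower bound times the measure of the good set of $x$. This will require a Jacobian/transversality analysis for the map $(u,t)\mapsto$ (tangent configuration), together with a careful accounting of overlaps among the resulting spherical caps. A secondary challenge, arising only for non-integer $s$, is the explicit construction of $T$ at scale $\delta$---a Cantor-type fractal of Minkowski dimension $s$ suffices in principle---so that $N(T,\delta)\lesssim\delta^{-s}$ holds uniformly in $\delta$ while retaining the adversarial geometric structure needed to extract the fractional exponents $1-(\ceil{s}-s)/2$ and $1+s-\ceil{s}$ appearing in the two formulas.
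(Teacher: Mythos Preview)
Your general strategy—build explicit $(T,f)$ pairs that force the $L^p$ operator norm to blow up—matches the paper's. But your first family (the $\delta$-ball) has a structural limitation that no choice of $T$ can overcome. With $f=\mathbf{1}_{B(0,\delta)}$ one has $\|f\|_p\sim\delta^{n/p}$, while the normalized spherical average over any sphere is $O(\delta^{n-1})$, attained on a set of bounded measure for compact $T$. Hence the best lower bound this example can produce is $\|\ST f\|_p/\|f\|_p\gtrsim\delta^{(n-1)-n/p}$, which yields only $p\geq n/(n-1)$. For every $s>0$ the first term in the proposition is strictly larger: e.g.\ at $n=3$, $s=1$ the proposition gives $p\geq 5/3$, whereas the $\delta$-ball yields only $p\geq 3/2$. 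The test function, not $T$, is the bottleneck here, so your proposed refinement of $T$ cannot close the gap.

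The paper instead fixes $T=\{0\}^{n-\ceil{s}}\times C_s$ once and for all ($C_s$ a self-similar Cantor set of dimension $s$ in $\R^{\ceil{s}}$) and varies the test set. For $s\leq 2$ it takes the anisotropic box $E=\{0\}^{n-\ceil{s}}\times B^{\ceil{s}}_{\delta^{1/2}}$ (so $E^\delta$ has side $\delta$ in $n-\ceil{s}$ directions and $\delta^{1/2}$ in the remaining $\ceil{s}$); for $s\geq 2$ it takes $E=\{0\}^{n-\ceil{s}}\times\tfrac{1}{\sqrt{2}}\S^{\ceil{s}-1}$. In each case one writes down a set $Z$ of centers together with a radius function $r\colon Z\to[1,2]$ such that \emph{every} sphere $S(z,r(z))$ already captures essentially all of $E^\delta$. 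This gives the pointwise bound $\ST\mathbf{1}_{E^\delta}\gtrsim\delta^{-1}|E^\delta|$ on $\bigcup_{z\in Z}(z-r(z)T)$ directly, and the only remaining computation is the Lebesgue measure of that union—a Fubini argument over $Z$ exploiting the Cantor structure of $T$. In particular, the ``main obstacle'' you flag (overlap analysis for different $(u,t)$) never arises. The second and fourth terms in the $\max$ come from monotonicity in $s$: one simply applies the same construction with $\floor{s}$ in place of $s$. Your tangent-slab idea is closer in spirit, but a codimension-$m$ slab with cap measure $\delta^{(n+m)/2-1}$ does not reproduce the $\delta\times\delta^{1/2}$ anisotropy that yields the correct exponents.
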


In general, the sufficient conditions in \Cref{thm:ST} for the $L^p$-boundedness of $\ST$ do not match the necessary conditions in \Cref{cor:lower}, but there are a few cases where they do match (modulo the endpoint). The sufficient conditions are sharp modulo endpoint for $n=2$ and $0\leq s \leq \sqrt{3}-1$ and for $n=3$ and $s=1$.  Another case when the two bounds match is when $s$ is an integer greater than or equal to $\frac{n+1}2$. This can be observed in \Cref{fig:ST} when $s=3$, where we graph the range of boundedness for $\ST$ when $n=5$.

\begin{figure}[h]
\centering
\includegraphics[page=2]{graphs}
\caption{Range of boundedness of $\ST$ in the case $n=5$. The blue and red lines indicate the boundaries of sufficient and necessary conditions, respectively.}
\label{fig:ST}
\end{figure}

The boundedness of the spherical maximal function $S$ (defined in \eqref{eq:spherical-maximal-Sf})
implies that there is no set of Lebesgue measure zero in $\R^n$ containing a sphere centered at every point in $\R^n$. This was proven independently for $n=2$ by Marstrand {\cite{Marstrand1987PackingCI}}. This is in contrast to the existence of Nikodym sets for spheres. Similarly, \Cref{thm:ST} has the following geometric consequence. We refer the reader to \cite[Chapter XI, Section 3.5]{Stein} for the implication.  
\begin{corollary}\label{cor:CDK3a} Let $n\geq 2$, $A\subset \R^n$ be a set of Lebesgue measure zero and $T\subset \R^n$ be a compact set  with finite $s$-dimensional upper Minkowski content for some $0 \leq s < n-1$. Then for almost every $y\in \R^n$, the $(n-1)$-dimensional Hausdorff measure of the set $A\cap (y+t(u+\S^{n-1}))$ is zero for every $t>0$ and every $u\in T$. 
\end{corollary}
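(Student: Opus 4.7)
The plan is to derive this corollary from the $L^p$-boundedness of $\ST$ established in Theorem~\ref{thm:ST}, via the standard linearization argument for geometric consequences of maximal inequalities (cf.~\cite[Ch.~XI, \S3.5]{Stein}). First, by writing $A = \bigcup_{R \in \N}(A \cap B(0,R))$ and using countable subadditivity of null sets, I may assume $A$ is bounded. By outer regularity of Lebesgue measure I may also enlarge $A$ to a bounded $G_\delta$ null set, and fix a decreasing sequence of bounded open sets $U_k \supset A$ with $|U_k| \to 0$.

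Let $E \subset \R^n$ denote the ``bad set'': $y \in E$ iff there exist $t_y > 0$ and $u_y \in T$ with $\mathcal{H}^{n-1}(A \cap (y + t_y(u_y + \S^{n-1}))) > 0$; the corollary is equivalent to $|E| = 0$. For each $y \in E$, a change of variables gives $c_y := \sigma(\{\omega \in \S^{n-1} : y + t_y(u_y + \omega) \in A\}) > 0$, where $c_y$ is a positive multiple of the corresponding Hausdorff measure. Since $U_k \supset A$,
\[
\ST 1_{U_k}(y) \;\geq\; \int_{\S^{n-1}} 1_{U_k}(y + t_y(u_y + \omega))\,d\sigma(\omega) \;\geq\; c_y \qquad \text{for all } k.
\]
Stratifying $E = \bigcup_{m \geq 1} E_m$ with $E_m = \{y \in E : c_y > 1/m\}$, and combining Chebyshev's inequality with Theorem~\ref{thm:ST} (for some admissible finite $p$), I obtain
\[
|E_m|^* \;\leq\; m^p \, \|\ST 1_{U_k}\|_{L^p(\R^n)}^p \;\les\; m^p \, \|1_{U_k}\|_{L^p(\R^n)}^p \;=\; m^p |U_k| \;\longrightarrow\; 0,
\]
so that $|E_m|^* = 0$ for every $m$, and hence $|E| = 0$.

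The main technical obstacle is measurability: the operator $\ST$ is initially defined only on continuous compactly supported functions, and its supremum over the continuous parameters $(t,u) \in (0,\infty) \times T$ need not a priori yield a measurable function of $y$ when fed an indicator. This is standardly resolved by noting that for each open $U_k$ the map $(y,t,u) \mapsto \int_{\S^{n-1}} 1_{U_k}(y + t(u + \omega))\,d\sigma(\omega)$ is lower semicontinuous on $\R^n \times (0,\infty) \times T$, so its supremum coincides with the supremum over any countable dense subset of $(0,\infty) \times T$ and is therefore Borel. The $L^p$-bound then extends to $\ST 1_{U_k}$ by monotone approximation from below by continuous functions with compact support, and the inclusion $E_m \subset \{y : \ST 1_{U_k}(y) > 1/m\}$ into a Borel set legitimizes the Chebyshev step applied to the outer measure $|E_m|^*$.
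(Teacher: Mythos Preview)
Your proof is correct and is precisely the standard argument the paper has in mind: the paper does not give its own proof but simply refers to \cite[Ch.~XI, \S3.5]{Stein} for this implication, and your write-up is a careful execution of exactly that argument (reduce to a bounded $G_\delta$ null set, apply the $L^p$ bound for $\ST$ from Theorem~\ref{thm:ST} to $1_{U_k}$, and use Chebyshev). Your attention to the measurability issues is appropriate and more explicit than what the paper provides.
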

 Corollary \ref{cor:CDK3a} is sharp in the sense that it fails to hold for $T=\S^{n-1}$ satisfying \eqref{eqn:coverassume} for $s=n-1$ due to the existence of Nikodym sets for spheres. Indeed, for a set $A$ and a map $y \mapsto p_y$ as in  \Cref{theorem:nikodym-translations-sphere}, the set $A\cap (y+u_y+\S^{n-1})$ has positive $(n-1)$-dimensional Hausdorff measure where $u_y=p_y-y \in \S^{n-1}$.

Although \Cref{cor:CDK3a} follows from \Cref{thm:ST}, it does not require the full strength of it. Indeed, it is possible to obtain $L^p$-bounds for a smaller range of exponents $p$ by using the local smoothing estimates for the wave equation, which is enough to deduce \Cref{cor:CDK3a}. We learned this observation from the authors of \cite{HKL}. There is a related result due to Wolff \cite{WoL} and Oberlin \cite{Oberlin} (see also \cite{Mitsis}): a Borel set containing a set of spheres of Hausdorff dimension larger than $1$ (as a subset of $\R^n \times \R_+$) must have positive Lebesgue measure. 

Next, we discuss the proof of \Cref{thm:ST}. It is well-known by the work \cite{MSS_Bo} that bounds for the circular maximal function $S$ can be deduced from local smoothing estimates for the wave equation. For the case of $\ST$, we observe a similar connection between $L^p$-bounds and fractal local smoothing estimates for the wave equation. To describe fractal local smoothing estimates, we fix some notation. 
For each $0<\alpha\leq n+1$, we denote by $\mathcal{C}(\alpha)$ the class of non-negative Borel measures $\nu$ supported on $\R^n \times [1,2]$ such that 
\begin{equation}\label{eqn:alpha_dimensional_measure}
\nu(B_r)  \leq r^\alpha,    
\end{equation}
for any ball $B_r\subset \R^{n+1}$ of radius $r$ for any $0<r\leq 1$. 

Suppose that $u(x,t)$ is the solution to the wave equation with initial data $u(\cdot,0) = f$ and $\partial_t u(\cdot,0) = 0$. 
A version of fractal local smoothing estimates for the wave equation is concerned with the ratio between $\| u \|_{L^q(\R^n \times [1,2],\nu)}$ and the $L^p(\R^n)$ norm of $f$ whose Fourier transform is supported on the annulus $\{ \xi \in \R^n: |\xi| \sim \delta^{-1} \}$ for $\delta \in (0,1)$. Even when $\nu$ is a constant multiple of the Lebesgue measure, this is a  difficult open problem for $n\geq 3$, which was settled for the case $n=2$ by Guth--Wang--Zhang \cite{GWZ}. We refer the reader to \cite[Section 1]{GWZ} for earlier results for $n=2$ and partial results in higher dimensions. For fractal measures, the problem has been studied in \cite{Wolff_decay, Erdogan, CHL, Harris, Harris0} for the case $p=2$, and sharp results are known for $n=2,3$. For the case $p>2$, Ham--Ko--Lee \cite{HKL} obtained some sharp estimates and used them to prove $L^p$-improving estimates for the circular maximal function relative to a class of fractal measures as a corollary. 
To be precise, the class of measures considered in \cite{HKL} is different from $\mathcal{C}(\alpha)$ in that \eqref{eqn:alpha_dimensional_measure} is required for every $r>0$, but this does not really change the problem since the local problem on $B_1 \times [1,2]$ is equivalent to the global problem on $\R^n \times [1,2]$ for estimating $u(x,t)$; see e.g. \cite[Lemma 2.6]{HKL}. 
We use results obtained in the papers \cite{CHL,HKL} for the proof of \Cref{thm:ST}, which is summarized in \Cref{thm:HKL}.

For the $n=2$ case, we need an additional ingredient: \Cref{prop:M2} (cf.\ \cite[Theorem 1.7]{Pramanik-Yang-Zahl}) discussed earlier. We reformulate the result in terms of the wave equation and spherical means.

\begin{theorem}\label{thm:avg_frac}
	Let $n=2$ and $\nu \in \mathcal{C}(\alpha)$ for $\alpha=1$. Suppose that $u(x,t)$ is the solution to the wave equation with initial data $u(\cdot,0) = f$ and $\partial_t u(\cdot,0) = 0$. If the Fourier transform of $f$ is supported on $\{\xi\in \R^2: |\xi| \sim \delta^{-1} \}$ for some $\delta \in (0,1/2)$, then
\begin{equation}\label{eqn:fraclo}
    \| u \|_{L^3(\nu)} \less \delta^{-1/2} \| f\|_{L^3(\R^2)}.
\end{equation}
 
Moreover, if $p>3$ and $\nu \in \mathcal{C}(\alpha)$ for some $\alpha>1$, then 
\begin{equation}\label{eqn:avg_frac}
	\| \avg f \|_{L^p(\nu)} \les \| f\|_{L^p(\R^2)}
\end{equation}
for $f\in L^p(\R^2)$, where $\avg f (x,t)$ denotes the  average of $f$ over the sphere of radius $t$ centered at $x$.
\end{theorem}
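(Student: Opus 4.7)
The plan is to derive \Cref{thm:avg_frac} from \Cref{prop:M2} by translating the combinatorial bound for unions of $\delta$-neighborhoods of circles into the Fourier-analytic language of the wave equation, supplemented by a Littlewood--Paley decomposition for the second claim. The main mechanism of translation is the classical Bessel asymptotic $J_0(r)=(2/(\pi r))^{1/2}\cos(r-\pi/4)+O(r^{-3/2})$, which on frequency-localized data identifies the 2D spherical averaging operator with the half-wave propagator up to a multiplicative factor of $\delta^{1/2}$.

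To set up the dictionary, note that the Fourier identity $\widehat{\avg f(\cdot,t)}(\xi)=\widehat f(\xi)J_0(2\pi t|\xi|)$, combined with the above asymptotic under the assumption that $\widehat f$ is supported in $\{|\xi|\sim\delta^{-1}\}$ and that $\nu$ is supported in $\R^2\times I$ for a compact interval $I$ bounded away from $0$, yields a pointwise identity of the form $\avg f(x,t)=\delta^{1/2}\bigl(T_+u(x,t)+T_-\tilde u(x,t)\bigr)+E(x,t),$ where $\tilde u$ is the sine companion of $u$, the operators $T_\pm$ have symbols of order $0$ on the given frequency band (hence are bounded in $L^q(\nu)$ uniformly), and the error satisfies $\|E\|_{L^q(\nu)}\les\delta^{3/2}\|f\|_{L^q(\R^2)}$. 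Since $\tilde u$ carries the same $L^q(\nu)$ bounds as $u$ by Plancherel, the dictionary shows that $\|u\|_{L^q(\nu)}$ and $\delta^{-1/2}\|\avg f\|_{L^q(\nu)}$ differ by at most an additive term $\delta\|f\|_{L^q(\R^2)}$.

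For the first inequality it therefore suffices to prove $\|\avg f\|_{L^3(\nu)}\less\|f\|_{L^3(\R^2)}$ for $\nu\in\mathcal C(1)$. By duality with $L^{3/2}(\nu)$ and a standard discretization of $\nu$ into a $\delta$-separated collection $\{(x_i,t_i)\}\subset\operatorname{supp}\nu$, the estimate reduces to an $L^{3/2}(\R^2)$ bound for $\sum_i 1_{C_i^\delta}$ over the circles $C_i$ with center $x_i$ and radius $t_i$. The hypothesis $\nu\in\mathcal C(1)$ guarantees the $1$-dimensional non-concentration $\#\{i:C_i\in B_r\}\les\delta^{-2}r$ on balls $B_r\subset\R^3$ (parallel to \Cref{lem:count}), which is exactly the input to \Cref{prop:M2}, and the first inequality follows upon undoing the discretization.

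For the second inequality, a Littlewood--Paley decomposition $f=\sum_{k\geq 0}f_k$ with $\widehat{f_k}$ supported in $\{|\xi|\sim 2^k\}$ reduces matters to a per-frequency bound $\|\avg f_k\|_{L^p(\nu)}\les 2^{-k\eta}\|f_k\|_{L^p}$ for some $\eta=\eta(\alpha,p)>0$, so that geometric summation concludes. Such decay is supplied by the fractal local smoothing estimates of \cite{HKL}, building on \cite{Erdogan,CHL}: for $n=2$, $p>3$, and $\nu\in\mathcal C(\alpha)$ with $\alpha>1$, these yield $\|u_k\|_{L^p(\nu)}\les 2^{k\sigma(p,\alpha)+k\epsilon}\|f_k\|_{L^p}$ with local-smoothing exponent satisfying $\sigma(p,\alpha)<\tfrac12$ strictly. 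Combining with the Bessel dictionary produces the required decay $\eta=\tfrac12-\sigma(p,\alpha)>0$, which is precisely what the hypotheses $p>3$ and $\alpha>1$ provide. The main obstacle is the clean discretization at the outset of the first inequality: one has to approximate a possibly singular measure $\nu\in\mathcal C(1)$ by a $\delta$-separated sum of point masses while guaranteeing that the associated family of circles inherits the non-concentration constant fed into \Cref{prop:M2} and without incurring losses beyond the $\delta^{-\epsilon}$ factor already present. The book-keeping of the Bessel error $E$ and the sine companion $\tilde u$ is routine once the stationary-phase decomposition is in place.
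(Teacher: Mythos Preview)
Your treatment of the first inequality \eqref{eqn:fraclo} is essentially the paper's argument: the Bessel/stationary-phase dictionary is the content of \eqref{eq:stationaryphase}--\eqref{eqn:smoothingtoaverage}, and the duality-plus-discretization step you describe is what the paper packages as the chain \eqref{eq:dual1} $\Rightarrow$ \eqref{eq:measure} in \Cref{prop:duality}, with \Cref{prop:M2} supplying \eqref{eq:dual1} at $p=q=3$, $\alpha=1$. You correctly flag the discretization of $\nu\in\mathcal C(1)$ as the delicate point; the paper handles it by the pigeonholing in the proof of \eqref{eq:weight}.

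The gap is in your argument for \eqref{eqn:avg_frac}. You assert that the fractal local smoothing estimates of \cite{HKL} (with \cite{Erdogan,CHL}) give $\|u_k\|_{L^p(\nu)}\les 2^{k\sigma(p,\alpha)+k\epsilon}\|f_k\|_{L^p}$ with $\sigma(p,\alpha)<\tfrac12$ for \emph{all} $p>3$ and $\alpha>1$. That is not what those references provide. As recorded in \Cref{thm:HKL}, \cite{HKL} yields the decaying estimate \eqref{eqn:fractalStrichartz2d} only for $p>\max(4-\alpha,(6-2\alpha)/\alpha)$; when $\alpha$ is just above $1$ this threshold is close to $4$, leaving the range $3<p\lesssim 4$ uncovered. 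The $L^2$ results of \cite{Erdogan,CHL} do not help here, and the crude bound \eqref{eqn:removemesure} combined with classical local smoothing gives $\sigma=(3-\alpha)/p$ for $2\le p\le 4$, which exceeds $\tfrac12$ when $p$ is near $3$ and $\alpha$ near $1$.

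The paper closes this gap not by quoting \cite{HKL} but by re-using \Cref{prop:M2}: this is the content of \Cref{prop:alpha1}. The geometric bound \eqref{eq:dual1} at $p=q=3$ holds for every $\alpha\ge 1$ and, via \Cref{prop:duality}, translates to exactly $\sigma=\tfrac12$ (up to $\delta^{-\epsilon}$) at $p=3$. One then interpolates with the classical local smoothing estimate \eqref{eqn:smoothing0} at some $p_n\ge 4$ (where \eqref{eqn:removemesure} and $\alpha>1$ do give $\sigma<\tfrac12$) to obtain strict gain on the entire open segment $3<p<p_n$. In short, \Cref{prop:M2} is needed for \emph{both} parts of \Cref{thm:avg_frac}, not only the first; your proposal for the second part should replace the appeal to \cite{HKL} by this interpolation through the $p=3$ endpoint.
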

The estimate \eqref{eqn:fraclo} can be stated in terms of the inhomogeneous Sobolev norm:
 \[
	\| u \|_{L^3(\nu)} \les \| f\|_{3,{\frac{1}{2}+\epsilon}},
 \]
for any $\epsilon>0$. It implies a version of Wolff's circular maximal theorem 
\[ \| u \|_{L^3_t([1,2], L^\infty_x(\R^2))} \les \| f\|_{3,{\frac{1}{2}+\epsilon}}, \]
from \cite[Equation (2)]{Wolff_Circ} by a linearlization argument; see \cite{HKL}. 

In the paper \cite{HKL}, it was conjectured that \eqref{eqn:avg_frac} holds for any $p>4-\alpha$ when $1<\alpha\leq 2$, which was proved for $\alpha \geq 3-\sqrt{3}$. 
This conjecture would imply $L^p(\R^2)$-estimates for the maximal function $\ST$ for compact $T\subset \R^2$ with finite $s$-dimensional upper Minkowski content for every $0\leq s<1$ and $p>2+s$, which would be sharp except possibly for endpoint.

\subsection*{Organization of the article} 
We organize the paper as follows. We study the maximal functions $\NM$ and $\NS$ and prove \Cref{theorem:NM-upper-bounds}\ref{theorem:NM-upper-bounds-item:upper-bounds} and \Cref{theorem:NS-upper-bounds}\ref{theorem:NS-upper-bounds-item:upper-bounds} in Section \ref{sec:NM} and \Cref{sec:NS}, respectively. We prove \Cref{thm:MT} and \Cref{thm:ST} in Section \ref{sec:ST}. Section \ref{sec:duality} contains a discussion on a geometric approach to fractal local smoothing estimates and the proof of \Cref{thm:avg_frac}. We prove lower bounds for maximal functions considered in this paper in Section \ref{sec:LowerBound}. In \Cref{section:KakeyaSet}, we sketch the proof of \Cref{theorem:nikodym-translations-sphere} and in \Cref{section:volumebound}, we prove volume bounds for the  intersection of annuli used to study $\NM$ and $\NS$.

\subsection*{Acknowledgments}

We thank David Beltran, Loukas Grafakos, Sanghyuk Lee, Malabika Pramanik, Andreas Seeger, Tongou Yang, Joshua Zahl for helpful discussions. We thank Joshua Zahl for pointing us to \cite{Pramanik-Yang-Zahl} and for sketching the proof of \Cref{prop:M2}. We thank the anonymous referees for their helpful comments and suggestions.

G.D.\ was partially supported by the Primus research programme PRIMUS/21/SCI/002 of Charles University.
J.K.\ was partially supported by a grant from the Research Grants Council of the Hong Kong Administrative Region, China (Project No. CityU 21309222). 

Part of the work was done while A.C.\ and G.D.\ were visiting the Hausdorff Research Institute for Mathematics in Bonn during the research trimester ``Interactions between Geometric measure theory, Singular integrals, and PDE,'' which was funded by the Deutsche Forschungsgemeinschaft (DFG, German Research Foundation) under Germany's Excellence Strategy -- EXC--2047/1 -- 390685813.

\section{The Nikodym maximal function $\NM$ associated with unit spheres}\label{sec:NM}

\label{section:NM}

In this section we prove \Cref{theorem:NM-upper-bounds}\ref{theorem:NM-upper-bounds-item:upper-bounds}. It suffices to show prove that
\begin{align}
\label{eq:NM-L1}
\|\NM\|_{L^1 \to L^1} &\lesssim \delta^{-1} & (n \geq 2)
\\
\label{eq:NM-Linfty}
\|\NM\|_{L^\infty \to L^\infty} &\leq 1 & (n \geq 2)
\\
\label{eq:NM-L2}
\|\NM\|_{L^2 \to L^2} &\lesssim (\log(1/\delta))^{1/2} & (n \geq 2)
\\
\label{eq:L32-bound}
\|\NM\|_{L^{3/2} \to L^{3/2}} &\lesssim \delta^{-1/6}(\log(1/\delta))^{1/3} & (n \geq 3)
\\
\label{eq:L43-bound}
\|\NM\|_{L^{4/3} \to L^{4/3}} &\lesssim  \delta^{-1/4}(\log(1/\delta))^{1/4} & (n \geq 4)
\end{align}
The bounds \eqref{eq:NM-L1} and \eqref{eq:NM-Linfty} are straightforward, so it suffices to prove the remaining three bounds.

\subsection{Reduction to geometric estimates}

In this subsection, we reduce \Cref{theorem:NM-upper-bounds}\ref{theorem:NM-upper-bounds-item:upper-bounds} to the following geometric lemma.

\begin{lemma}
\label{lemma:geometric-estimates-intersections-234}
There exists an absolute constant $c_{diam} > 0$ such that the following is true. Let $n\geq 2$, $\delta \in (0,1/2)$. Let $\{S_i\}_{i \in I}$ be a collection of unit spheres in $\R^n$ whose centers lie in a set of diameter at most $c_{diam}$. Suppose that for each $i$, there exists $\omega_i \in S_i$ such that $\{\omega_i\}_{i \in I}$ is a $\delta$-separated subset. Then we have the following estimates. 

For $n \geq 2$,
\begin{align}
\tag{2SPH}
\label{eq:sum-j-desired-bound}
\delta^{n}
\sum_{\substack{
j \in I
}}
|S_i^\delta \cap S_j^\delta|
\lesssim
\delta^2
\log(1/\delta)
\qquad\text{for each $i \in I$}.
\end{align}

Furthermore, for $n \geq 3$,
\begin{align}
\tag{3SPH}
\label{eq:sum-jk-desired-bound}
\delta^{2n}
\sum_{\substack{
j,k \in I
}}
|S_i^\delta \cap S_j^\delta \cap S_k^\delta|
\lesssim
\delta^{5/2} \log(1/\delta)
\qquad\text{for each $i \in I$}.
\end{align}

Furthermore, for $n \geq 4$,
\begin{align}
\tag{4SPH}
\label{eq:sum-jkl-desired-bound}
\delta^{3n}
\sum_{\substack{
j,k,\ell \in I
}}
|S_i^\delta \cap S_j^\delta \cap S_k^\delta \cap S_\ell^\delta|
\lesssim
\delta^{3} \log(1/\delta)
\qquad\text{for each $i \in I$}.
\end{align}
\end{lemma}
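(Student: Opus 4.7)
\emph{Proof plan.} The plan for each of the three bounds is to combine a pointwise geometric estimate for the intersection volume with a counting bound exploiting the $\delta$-separation of the marked points $\{\omega_j\}$.

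For (2SPH), let $d_{ij} = |x_i - x_j|$ denote the distance between the centers of $S_i$ and $S_j$. A standard tubular-neighborhood computation gives, in the transverse range $\delta \lesssim d_{ij} \lesssim 2 - \delta$,
\[
    |S_i^\delta \cap S_j^\delta| \lesssim \frac{\delta^2}{d_{ij}}(1 - d_{ij}^2/4)^{(n-3)/2},
\]
because the intersection is a $\delta$-tube around the $(n-2)$-sphere of radius $\sqrt{1-d_{ij}^2/4}$, and the normal 2-plane cross-section is a parallelogram of area $\lesssim \delta^2/\sin\theta$ with $\sin\theta = d_{ij}\sqrt{1-d_{ij}^2/4}$. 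For the counting, writing $\omega_j = x_j + v_j$ with $v_j$ the unit outward normal to $S_j$ at $\omega_j$, the constraint $|x_i - x_j| \le r$ forces $\omega_j$ to lie in the annulus of inner and outer radii $1-r,\,1+r$ about $x_i$, which has volume $\lesssim r$; combined with $\delta$-separation of $\{\omega_j\}$ this yields $\#\{j : d_{ij} \le r\} \lesssim r\delta^{-n}$. A dyadic decomposition $d_{ij} \sim 2^{-k}$ with $\delta \le 2^{-k} \le 1$ then gives
\[
    \sum_j |S_i^\delta \cap S_j^\delta| \lesssim \sum_{k} \delta^2 \cdot 2^k \cdot 2^{-k}\delta^{-n} \lesssim \delta^{2-n}\log(1/\delta),
\]
which is (2SPH) after multiplying by $\delta^n$. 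The extreme regimes $d_{ij} < \delta$ and $d_{ij} \approx 2$ are covered by the trivial bound $|S_i^\delta\cap S_j^\delta|\le|S_i^\delta|\sim\delta$ and an endpoint volume computation, both absorbed into the same estimate.

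For (3SPH) and (4SPH), I would follow the same two-step template with more transversality parameters. For $m \in \{3,4\}$ unit spheres in generic position in $\R^n$, the $\delta$-neighborhoods intersect in a tubular region around the $(n-m)$-dimensional intersection sphere, with volume controlled by $\delta^m$ divided by a Gram determinant measuring the transversality of the outward normals $p - x_{j_\ell}$ at an intersection point $p$. I would decompose dyadically by this determinant (and by the pairwise center distances), bound the count in each dyadic shell via $\delta$-separation of $\{\omega_j\}$, and sum to produce the desired $\delta^{5/2}\log(1/\delta)$ and $\delta^3\log(1/\delta)$. Heuristically, the fractional exponent $5/2$ in (3SPH) reflects a worst case in which a near-tangent two-sphere subconfiguration has gap $\sim\delta^{1/2}$, while the integer exponent $3$ in (4SPH) reflects that a fourth sphere can be essentially tangent to the pencil of three fixed spheres only a logarithmic number of times in $\delta^{-1}$ under the separation hypothesis.

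\emph{Main obstacle.} The core difficulty is the geometric combinatorics of degenerate configurations in the triple and quadruple cases: the pointwise volume bounds blow up on near-tangent or near-coaxial strata, so one must extract a matching gain in the count from the single-point $\delta$-separation of $\{\omega_j\}$. This requires identifying the relevant transversality parameters, proving sharp volume estimates on each stratum, and verifying that the resulting dyadic sum loses only a $\log(1/\delta)$ factor. These volume estimates form the substantive geometric content of \Cref{sub:AppA}; once they are in hand, the counting and summation proceed essentially as in the two-sphere case.
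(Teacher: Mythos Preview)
Your treatment of (2SPH) is essentially the paper's: the same volume bound $|S_i^\delta\cap S_j^\delta|\lesssim\delta^2/(d_{ij}+\delta)$ combined with the same annular counting $\#\{j:d_{ij}\le r\}\lesssim r\delta^{-n}$, summed dyadically.

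For (3SPH), your outline is in the right spirit but the specific transversality parameter the paper uses is the \emph{circumradius} $R_{ijk}$ of the triangle $x_ix_jx_k$, not a generic Gram determinant. The three-annulus volume bound (\Cref{lem:three_spheres}) takes two forms: $\delta^3/(M_{ijk}^2m_{ijk})$ when $R_{ijk}$ is bounded away from $1$ (or whenever $n\ge4$), and $\delta^{5/2}/(M_{ijk}^{3/2}m_{ijk}^{1/2})$ when $R_{ijk}$ is close to $1$ (relevant only for $n=3$). The extremal configuration is three well-separated centers lying on a common \emph{unit} circle, not a near-tangent two-sphere pair; your heuristic captures only the auxiliary case $m_{ijk}\lesssim\sqrt\delta$, which the paper disposes of by reduction to (2SPH).

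For (4SPH) your plan diverges substantively from the paper's, and your stated heuristic is not correct. The paper uses \emph{no} four-sphere volume estimate at all. Instead it relabels so that the shortest edge of the tetrahedron $x_ix_jx_kx_\ell$ is incident to $x_\ell$, then discards $S_\ell$ via the trivial bound $|S_i^\delta\cap S_j^\delta\cap S_k^\delta\cap S_\ell^\delta|\le|S_i^\delta\cap S_j^\delta\cap S_k^\delta|$. The point is that for $n\ge4$ the three-sphere bound is already $\delta^3/(M_{ijk}^2m_{ijk})$ \emph{regardless} of circumradius, so summing over $\ell$ (confined to a ball of radius $m_{ijk}$ about $x_i$ or $x_j$) costs only a factor $\delta^{-n}m_{ijk}$, which exactly cancels the $1/m_{ijk}$. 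Your claim that ``a fourth sphere can be essentially tangent to the pencil only a logarithmic number of times'' is false in the extremal radius-$1/\sqrt2$ configuration, where \emph{all} $\sim\delta^{-n}$ spheres share a common circle and every quadruple is degenerate; the saving is dimensional (the three-sphere estimate improves for $n\ge4$), not combinatorial in the fourth index. A direct four-way Gram-determinant analysis would have to confront this stratum head-on, which the paper's trick sidesteps entirely.
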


\begin{remark}
\label{remark:average-intersection}
Since $\# I \lesssim \delta^{-n}$, we can view \eqref{eq:sum-j-desired-bound} as a bound on the average size of $|S_i^\delta \cap S_j^\delta|$, taken over all $j$. (Recall $i$ is fixed.) Thus, ignoring logarithmic factors, \eqref{eq:sum-j-desired-bound} says that in the worst case configuration, the ``typical'' $|S_i^\delta \cap S_j^\delta|$ is at most of size $\delta^2$. Similarly, we can view the left-hand sides of \eqref{eq:sum-jk-desired-bound} and \eqref{eq:sum-jkl-desired-bound} as averages (taken over all $j,k$ and over $j,k,\ell$, respectively). In the worst case configuration, the typical volume of the intersection is at most of size $\delta^{5/2}$ and $\delta^3$, respectively.
\end{remark}

\begin{remark}
\label{remark:mSPH}
With the same assumptions as in \Cref{lemma:geometric-estimates-intersections-234}, we have the following estimate for all dimensions $n \geq 4$ and all $m \geq 4$:
\begin{align}
\label{eq:mSPH}
\tag{mSPH}
\delta^{(m-1)n}
\sum_{\substack{
j_1, \ldots, j_{m-1} \in I
}}
|S_i^\delta \cap S_{j_1}^\delta \cap \cdots \cap  S_{j_{m-1}}^\delta|
\lesssim
\delta^{3} \log(1/\delta)
\qquad\text{for each $i \in I$}.
\end{align}
This follows immediately from the trivial bound $|S_i^\delta \cap S_{j_1}^\delta \cap \cdots \cap  S_{j_{m-1}}^\delta| \leq |S_i^\delta \cap S_{j_1}^\delta \cap S_{j_2}^\delta \cap S_{j_3}^\delta|$, the estimate \eqref{eq:sum-jkl-desired-bound}, and $\# I^{m-4} \lesssim (\delta^{-n})^{m-4}$. The power of $3$ in \eqref{eq:mSPH} cannot be improved; this is related to \Cref{remark:same-after-4} and \Cref{remark:lenz}. (The estimate \eqref{eq:mSPH} is not needed to prove \Cref{theorem:NM-upper-bounds}\ref{theorem:NM-upper-bounds-item:upper-bounds}.)
\end{remark}

To prove the reduction, we will need the following standard duality argument (see, e.g., \cite[Proposition 22.4]{Mattila2015}). For the convenience of the reader, we include a proof.
\begin{lemma}[Duality] \label{lem:duality-unit-radius}
Let $1 \leq p,q\leq \infty$. Let $\{ \omega_i \}$ be a maximal $\delta$-separated set of points in $\R^n$. Then 
\[ \| \NM \|_{L^p(\R^n) \to L^q(\R^n)} \les \delta^{n-1} \sup \| \sum_{i} a_i 1_{S_i^{O(\delta)}} \|_{L^{p'}(\R^n)}. \]
where the supremum is taken over all choices of unit spheres $S_i \ni \omega_i$ and $a_i\geq0$ such that $\delta^n \sum_{i}  a_i^{q'} = 1$. The implied constant in $O(\delta)$ is absolute.
\end{lemma}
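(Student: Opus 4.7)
The plan is to execute the standard linearization--localization--duality scheme (cf.\ \cite[Proposition 22.4]{Mattila2015}) adapted to spheres. Let $\{B_i\}$ be the Voronoi-type partition of $\R^n$ subordinate to $\{\omega_i\}$; since $\{\omega_i\}$ is maximal $\delta$-separated, one has $|B_i| \les \delta^n$ and $\operatorname{diam}(B_i) \les \delta$. The geometric input is the following: for any $x \in B_i$ and any unit sphere $S$ through $x$ with center $c$, the translate $S' := S + (\omega_i - x)$ is a unit sphere through $\omega_i$ (its center $c + (\omega_i - x)$ is still at distance $1$ from $\omega_i$), and by the triangle inequality $S^\delta \subset (S')^{O(\delta)}$. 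Consequently, for every $x \in B_i$,
\[
\NM f(x) \les M_i f := \sup_{S \ni \omega_i} \frac{1}{|S^{O(\delta)}|} \int_{S^{O(\delta)}} f,
\]
which gives $\|\NM f\|_q^q \les \delta^n \sum_i (M_i f)^q$.

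Next I would dualize the $\ell^q$ sum and simultaneously realize the inner supremum in $M_i f$. Writing
\[
\Bigl( \sum_i (M_i f)^q \Bigr)^{1/q} = \sup_{c \ge 0,\ \|c\|_{\ell^{q'}} = 1} \sum_i c_i\, M_i f,
\]
and choosing, for each $i$ (depending on $f$), a unit sphere $S_i \ni \omega_i$ with $M_i f \le 2 |S_i^{O(\delta)}|^{-1} \int_{S_i^{O(\delta)}} f$, the estimate $|S_i^{O(\delta)}| \sim \delta$ and H\"older give
\[
\sum_i c_i\, M_i f \les \delta^{-1} \int f \sum_i c_i 1_{S_i^{O(\delta)}} \le \delta^{-1} \|f\|_p \Bigl\| \sum_i c_i 1_{S_i^{O(\delta)}} \Bigr\|_{p'}.
\]
Combining, $\|\NM f\|_q \les \delta^{n/q - 1} \|f\|_p \sup_c \bigl\| \sum_i c_i 1_{S_i^{O(\delta)}} \bigr\|_{p'}$.

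Finally, renormalize by $c_i = \delta^{n/q'} a_i$: the constraint $\|c\|_{\ell^{q'}} = 1$ becomes $\delta^n \sum_i a_i^{q'} = 1$, and the prefactor collapses to $\delta^{n/q - 1 + n/q'} = \delta^{n-1}$ since $n/q + n/q' = n$, matching the lemma exactly. The supremum on the right is over all admissible $(S_i, a_i)$, so it absorbs the $f$-dependent selection $S_i = S_i(f)$ made above. I do not anticipate a serious obstacle: the argument is a direct spherical analogue of the classical Nikodym duality for lines, and the only delicate step—the geometric comparison of two unit spheres whose centers lie within $\delta$—is in fact cleaner than the corresponding Kakeya-tube comparison, since the common radius is fixed and the Hausdorff distance equals the distance between centers.
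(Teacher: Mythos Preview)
Your proof is correct and follows essentially the same linearization--localization--duality scheme as the paper: the paper's key observation \eqref{eq:nearby-NM-bound} is exactly your translation comparison $S^\delta \subset (S')^{O(\delta)}$, and the remaining $\ell^q$-duality plus H\"older steps are identical up to the cosmetic difference that you carry an auxiliary normalization $c_i = \delta^{n/q'} a_i$ and collapse the exponents at the end, whereas the paper builds the constraint $\delta^n \sum_i a_i^{q'} = 1$ in from the start. One implicit assumption worth stating is $f \ge 0$ (harmless since $\NM f \le \NM |f|$), which the paper handles by writing $|f|$ in the integrand.
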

\begin{proof} 
Let $f \in L^p(\R^n)$. We make the following observation:  if $|x - x'| \leq \delta$ and $S$ is any unit sphere containing $x$, then there exists a unit sphere $S'$ containing $x'$ such that $S^\delta \subset (S')^{2\delta}$. Thus,
\begin{align}
\label{eq:nearby-NM-bound}
\text{If } |x - x'| \leq \delta, \text{ then } \NM f(x) \lesssim \NMO f(x').
\end{align}
Combining this with duality of $\ell^q$ spaces and H\"older's inequality, 
\begin{align*}
\| \NM f \|_{L^{q}(\R^n)}
&\labelrel\les{eq:nearby-NM-bound}
\left(\sum_{i}  \delta^n \NMO f(\omega_i)^{q}\right)^{1/q}
\\
&=
\sup 
\sum_i \delta^n a_i \NMO f(\omega_i)
\\
&\les
\delta^{n-1}
\sup
\sum_i a_i \int_{\R^n} |f| 1_{S_i^{O(\delta)}}
\\
&\leq
\delta^{n-1}
\sup 
\| \sum_{i} a_i 1_{S_i^{O(\delta)}} \|_{L^{p'}(\R^n)}
\| f \|_{L^p(\R^n)}
\end{align*}
where the supremum is taken over all choices of unit spheres $S_i \ni \omega_i$ and $a_i\geq0$  such that $\delta^n \sum_{i}  a_i^{q'} = 1$.
\end{proof}

\begin{proof}[Proof that \eqref{eq:sum-j-desired-bound} implies \eqref{eq:NM-L2}]

Let $\{ \omega_i \}_{i \in I}$ be a maximal $\delta$-separated set of points in $\R^n$. For each $i$, let $S_i$ be a unit sphere containing $\omega_i$, and let and $a_i\geq0$ such that 
\begin{align}
\label{eq:sum-to-1}
\delta^n \sum_{i \in I}  a_i^{q'} = 1
\end{align}
By \Cref{lem:duality-unit-radius}, it is enough to show 
\begin{align}
\label{eq:2sph-duality-sufficient}
\|  \sum_{i \in I} a_i 1_{S_i^\delta} \|_{L^2(\R^n)}^2
\lesssim
\delta^{2-2n} \log(1/\delta).
\end{align}

We partition $\R^n = \bigcup_m P_m$ such that each $P_m$ has diameter at most $c_{diam}$ and the $2$-neighborhoods of $P_m$ have bounded overlap (depending only on $n$). We let $I_m$ be the set of $i \in I$ such that the center of $S_i$ is in $P_m$. Bounded overlap implies that 
\begin{align}
\label{eq:bounded-overlap}
\# \{m : \sum_{i \in I_m} a_i 1_{S_i^\delta}(x) \neq 0 \} \lesssim_n 1
\qquad\text{ for each } x \in \R^n.
\end{align}
This gives
\begin{align*}\|  \sum_{i \in I} a_i 1_{S_i^\delta} \|_{L^2(\R^n)}^2
&\labelrel\lesssim{eq:bounded-overlap}
\sum_m \|  \sum_{i \in I_m} a_i 1_{S_i^\delta} \|_{L^2(\R^n)}^2
\\
&= 
\sum_m \sum_{i,j \in I_m}  a_{i} |S_i^\delta \cap S_j^\delta|^{1/2} a_{j}  |S_i^\delta \cap S_j^\delta|^{1/2} 
\\
&\leq 
\sum_m \sum_{i,j \in I_m}  a_{i}^2 |S_i^\delta \cap S_j^\delta| 
\\
&\labelrel\lesssim{eq:sum-j-desired-bound}
\delta^{2-n} \log(1/\delta) \sum_m  \sum_{i \in I_m}  a_{i}^2  
\\
&\labelrel={eq:sum-to-1}
\delta^{2-2n} \log(1/\delta),
\end{align*}
where we used Cauchy-Schwarz in the middle inequality. This proves \eqref{eq:2sph-duality-sufficient}.
\end{proof}

The proofs that \eqref{eq:sum-jk-desired-bound}  implies \eqref{eq:L32-bound} and that \eqref{eq:sum-jkl-desired-bound} implies \eqref{eq:L43-bound} are similar to the proof above. In place of Cauchy--Schwarz, we apply H\"older to get
\begin{align*}
\| \sum_{i} a_i 1_{S_i^\delta} \|_3^3
&\leq
\sum_{i,j,k} a_i^3 |S_i^\delta \cap S_j^\delta \cap S_k^\delta|
\\
\| \sum_{i} a_i 1_{S_i^\delta} \|_4^4
&\leq
\sum_{i,j,k,\ell} a_i^4 |S_i^\delta \cap S_j^\delta \cap S_k^\delta \cap S_\ell^\delta|
.
\end{align*}

We prove \eqref{eq:sum-j-desired-bound}, \eqref{eq:sum-jk-desired-bound}, \eqref{eq:sum-jkl-desired-bound} in the next three subsections.

\subsection{Two spheres}\label{sec:L2unitsphere}

In our proof of \Cref{lemma:geometric-estimates-intersections-234}, we frequently use the estimate below. In applications of this estimate, $x_j$ is the center of a unit sphere containing $\omega_j$.

\begin{lemma}[Weighted counting estimate]
\label{lemma:basic-sum-estimate}
Let $n \geq 2$. Let $\{\omega_j\} \subset \R^n$ be a $\delta$-separated set. For each $j$, let $x_j \in \R^n$ be a point satisfying $|\omega_j - x_j| = 1$. Then for any $a \in \R^n$ and any $0 \leq P \leq Q\leq 1$,
\begin{align}
\label{eq:lemma:basic-sum-estimate}
\sum_{\substack{
    j
    \\ 
    P \leq |a-x_j| \leq Q
}} 
(|a-x_j|+\delta)^\alpha
\lesssim_{\alpha, n}
\begin{cases}
\delta^{-n} Q^{\alpha+1} &\text{if } \alpha > -1
\\
\delta^{-n} \log\frac{Q}{P+\delta} &\text{if } \alpha = -1
\\
\delta^{-n} (P+\delta)^{\alpha+1} &\text{if } \alpha < -1
\end{cases}
\end{align}
\end{lemma}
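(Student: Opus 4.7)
The plan is to decompose the sum dyadically in the quantity $|a-x_j|+\delta$. For each integer $k\geq 0$, let
\[
A_k = \{\, j : 2^k \delta \leq |a - x_j| + \delta < 2^{k+1}\delta \,\}.
\]
On $A_k$ the summand is comparable to $(2^k\delta)^\alpha$, so the whole sum is bounded by $\sum_k |A_k|(2^k\delta)^\alpha$, where we need only include those $k$ for which $A_k$ can intersect the constraint $P\leq |a-x_j|\leq Q$; this amounts to $(P+\delta)/\delta \lesssim 2^k \lesssim (Q+\delta)/\delta$.

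The key step is the geometric count $|A_k| \lesssim 2^k \delta^{\,1-n}$. For $j\in A_k$, the condition $|\omega_j-x_j|=1$ together with $|a-x_j|\leq 2^{k+1}\delta$ forces $\omega_j$ into the annular shell
\[
E_k = \bigl\{\, z \in \R^n : \bigl|\, |z-a|-1 \,\bigr| \leq 2^{k+1}\delta \,\bigr\}
\]
around the unit sphere $S(a,1)$. Because the ambient radius is $\approx 1$, provided $2^k\delta \lesssim 1$ (which is the range of interest, since in the intended application the $x_j$'s lie in a bounded set so $Q\lesssim 1$), the shell $E_k$ has Lebesgue measure $\approx 2^k\delta$. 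Since the $\omega_j$'s form a $\delta$-separated subset of $E_k$, a standard packing argument yields $|A_k|\lesssim |E_k|/\delta^n \lesssim 2^k \delta^{\,1-n}$. Combining this with the pointwise bound on the summand, each dyadic level contributes
\[
\sum_{j\in A_k}(|a-x_j|+\delta)^\alpha \lesssim 2^k \delta^{\,1-n}\cdot (2^k\delta)^\alpha = 2^{k(\alpha+1)}\,\delta^{\alpha+1-n}.
\]

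The three cases of the lemma then correspond to the behaviour of the geometric series with ratio $2^{\alpha+1}$ over the admissible range of $k$. When $\alpha > -1$ the series is dominated by its top term, with $2^k\delta \approx Q+\delta$, giving $\delta^{-n}(Q+\delta)^{\alpha+1}\lesssim \delta^{-n}Q^{\alpha+1}$ once $Q\gtrsim \delta$ (the complementary range $Q\lesssim \delta$ being trivial). When $\alpha=-1$ every term equals $\delta^{-n}$ and the number of admissible $k$ is $\approx \log\bigl((Q+\delta)/(P+\delta)\bigr) \lesssim \log(Q/(P+\delta))$. When $\alpha<-1$ the series is dominated by its bottom term, with $2^k\delta \approx P+\delta$, giving $\delta^{-n}(P+\delta)^{\alpha+1}$. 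The only delicate point is the volume bound on $E_k$ in the second step: the factor $\delta^{\,1-n}$ rather than $\delta^{-n}$ in the count of $A_k$ comes from the fact that the unit-distance constraint $|\omega_j-x_j|=1$ puts $\omega_j$ into a \emph{codimension-$1$} neighborhood of the fixed sphere $S(a,1)$ rather than into a full ball of radius $2^k\delta$, and it is this saving which drives the whole estimate.
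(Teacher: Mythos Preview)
Your proof is correct and follows essentially the same route as the paper's: a dyadic decomposition in the scale $|a-x_j|+\delta$, combined with the key geometric observation that the constraint $|\omega_j-x_j|=1$ confines $\omega_j$ to a thin annular neighborhood of $S(a,1)$, yielding the count $\#\{j:|a-x_j|\lesssim\rho\}\lesssim\delta^{-n}\rho$. Your explicit remark that the annulus volume bound requires $2^k\delta\lesssim 1$ (equivalently $Q\lesssim 1$) is a point the paper leaves implicit, since in every application the centers lie in a set of bounded diameter.
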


\begin{proof}
First we note the following: if $\delta \leq \rho \leq 1$ and  $|a-x_j| \lesssim \rho$, then $\omega_j$ must be in the $O(\rho)$-neighborhood of the unit sphere $\S^{n-1}+a$. Since $\{\omega_j\}$ is $\delta$-separated,
\begin{align} \label{eq:lemma:basic-sum-estimate:counting}
    \#\{ j : |a-x_j| \leq \rho \}
    \lesssim
    \delta^{-n} \rho.
\end{align}
Thus,
\begin{align}
\sum_{\substack{
j
\\ 
    P \leq |a-x_j| \leq Q
}} 
(|a-x_j|+\delta)^\alpha
&\lesssim
\sum_{\substack{
    \ell \in \Z
    \\ 
    P+\delta \lesssim 2^\ell \lesssim Q
}} 
\sum_{\substack{
    j
    \\ 
    |a-x_j|+\delta \approx 2^\ell
}} 
2^{\ell \alpha}
\lesssim
\delta^{-n}
\sum_{\substack{
    \ell \in \Z
    \\ 
    P+\delta \lesssim 2^\ell \lesssim Q
}} 
2^{\ell(\alpha+1)}
\end{align}
and the result follows.
\end{proof}

With \Cref{lemma:basic-sum-estimate} and an elementary estimate on the intersection of two spheres (\Cref{lem:intersection}), we can easily deduce \eqref{eq:sum-j-desired-bound}.

\begin{proof}[Proof of \eqref{eq:sum-j-desired-bound}]
Let $x_j$ denote the center of $S_j$, so that $|x_j - \omega_j| = 1$. 
By choosing $c_{diam} \leq \frac{1}{2}$ (say), we can apply the volume bound for the intersection from \Cref{lem:intersection}. This gives
\begin{align*}
\sum_{j}  |S_i^\delta \cap S_j^\delta| 
&\labelrel\les{eq:intersection-2-spheres-vary-special}
\sum_{j} \frac{\delta^2}{|x_i - x_j| + \delta}
\labelrel\les{eq:lemma:basic-sum-estimate}  
\delta^2 \delta^{-n} \log (1/\delta),
\end{align*}
which completes the proof.
\end{proof}

\begin{remark}
A Fourier analytic proof of the $L^2$ bound \eqref{eq:NM-L2} is given in \Cref{thm:NTfulldim}.
\end{remark}

\subsection{Three spheres}
\label{subsection:L32}

Before proving \eqref{eq:sum-jk-desired-bound}, we provide some informal remarks and motivation. To prove \eqref{eq:sum-jk-desired-bound}, we will use estimates on the intersection of three unit spheres, \Cref{lem:three_spheres}.
In the proof in the previous section, we avoided external tangencies between two spheres by making $c_{diam}$ small. However, for three spheres in $\R^3$, there is a ``tangential'' configuration that cannot be avoided, even if we make $c_{diam}$ small: Roughly speaking, if we have three well-separated points $a,b,c \in \R^3$ that lie on a unit circle, then $S^\delta(a) \cap S^\delta(b) \cap S^\delta(c)$ is approximately a ``tube'' of dimensions $\delta \times \delta \times \delta^{1/2}$. 

On the other hand, if $a,b,c$ lie on a circle of radius bounded away from $1$, then the three spheres intersect transversely and the volume is $O(\delta^3)$. In \Cref{lem:three_spheres}, there are different estimates depending on this circumradius.

If we choose all our unit spheres $\{S_i\}_{i \in I}$ so that their centers lie on a fixed unit circle, then the typical triple intersection is roughly $\delta^{5/2}$. (This is related to the ``tube'' example in the proof of \Cref{prop:NDeltaLowerBd}.) The bound \eqref{eq:sum-jk-desired-bound} asserts that this is essentially the worst configuration.

\begin{proof}[Proof of \eqref{eq:sum-jk-desired-bound}]
We adopt the following notational conventions in the upcoming calculations:  \begin{itemize}
\item We let $x_j$ denote the center of $S_j$.
\item We let $M_{ijk}, m_{ijk}, R_{ijk}$ denote the longest side length, shortest side length, and circumradius, respectively, of the triangle with vertices $x_i,x_j,x_k$. 
\end{itemize}

By choosing $c_{diam}$ smaller than the constant $c_2$ in \Cref{lem:three_spheres}, we can apply \Cref{lem:three_spheres} to every term in the left-hand side of \eqref{eq:sum-jk-desired-bound}. We split the sum over $j,k$ into three parts:
\begin{align}
\sum_{\substack{
j,k
}}
|S_i^\delta \cap S_j^\delta \cap S_k^\delta|
&\lesssim
\sum_{\substack{
j,k \\ |x_i-x_k| \leq |x_i-x_j|
}}
|S_i^\delta \cap S_j^\delta \cap S_k^\delta|
\\
&=
\sum_{\substack{
j,k \\ |x_i-x_k| \leq |x_i-x_j| \\ m_{ijk} \lesssim \sqrt{\delta}
}}  
+
\sum_{\substack{
j,k \\ |x_i-x_k| \leq |x_i-x_j| \\ m_{ijk} \gtrsim \sqrt{\delta} \\ R_{ijk} \leq \frac{1}{2}
}}
+
\sum_{\substack{
j,k \\ |x_i-x_k| \leq |x_i-x_j| \\ m_{ijk} \gtrsim \sqrt{\delta} \\ \frac{1}{2} \leq R_{ijk} \leq 2
}}  
\end{align}
(By \eqref{eq:empty-intersection}, we do not need to consider triples with $R_{ijk} \geq 2$.)  We now bound each of the three terms by the right-hand side of \eqref{eq:sum-jk-desired-bound}.
Note that the condition $|x_i-x_k| \leq |x_i-x_j|$ implies
\begin{align}
\label{eq:M-eq-ab}
M_{ijk} &\approx |x_i-x_j|
\\
\label{eq:m-eq-ac-or-bc}
m_{ijk} &= \min(|x_i-x_k|,|x_j-x_k|).
\end{align}

For the first term, we reduce to the case of two spheres. \begin{align*}
\sum_{\substack{
j,k \\ |x_i-x_k| \leq |x_i-x_j| \\ m_{ijk} \lesssim \sqrt{\delta}
}}  
&\labelrel\leq{eq:m-eq-ac-or-bc}
\sum_{\substack{
j
}}  
|S_i^\delta \cap S_j^\delta|
\sum_{\substack{
k \\ |x_i-x_k| \lesssim \sqrt{\delta} \text{ or } |x_j-x_k| \lesssim \sqrt{\delta} 
}}  
1
\\
&\labelrel\lesssim{eq:lemma:basic-sum-estimate}
\delta^{-n}\delta^{1/2}
\sum_j
|S_i^\delta \cap S_j^\delta|
\\
&\labelrel\lesssim{eq:sum-j-desired-bound}
\delta^{-n}\delta^{1/2} \delta^{-n} \delta^{2} \log (1/\delta)
\end{align*}

For the second term, we use the volume bound for the intersection of three unit spheres when the circumradius is bounded away from 1. 
This gives
\begingroup\allowdisplaybreaks
\begin{align*}
\sum_{\substack{
j,k \\ |x_i-x_k| \leq |x_i-x_j| \\ m_{ijk} \gtrsim \sqrt{\delta} \\ R_{ijk} \leq \frac{1}{2}
}}
&\labelrel\lesssim{eq:circumradius-small-estimate}
\delta^3
\sum_{\substack{
j,k \\ |x_i-x_k| \leq |x_i-x_j| \\ m_{ijk} \gtrsim \sqrt{\delta}
}} 
\frac{1}{M_{ijk}^2 m_{ijk}}
\\
&\labelrel\lesssim{eq:M-eq-ab}
\delta^3
\sum_{\substack{
j \\ |x_i-x_j| \gtrsim \sqrt{\delta}
}} 
\frac{1}{|x_i-x_j|^2}
\sum_{\substack{
k \\ |x_i-x_k| \leq |x_i-x_j| \\ m_{ijk} \gtrsim \sqrt{\delta}
}} 
\frac{1}{m_{ijk}}
\\
&\labelrel\leq{eq:m-eq-ac-or-bc} 
\delta^3
\sum_{\substack{
j \\ |x_i-x_j| \gtrsim \sqrt{\delta}
}} 
\frac{1}{|x_i-x_j|^2}
\left(
\sum_{\substack{
k \\ \sqrt{\delta} \lesssim |x_i-x_k| \leq |x_i-x_j|
}} 
\hspace{-.5cm}
\frac{1}{|x_i-x_k|}
+
\sum_{\substack{
k \\ \sqrt{\delta} \lesssim |x_j-x_k| \lesssim |x_i-x_j|
}} 
\hspace{-.5cm}
\frac{1}{|x_j-x_k|}
\right)
\\
&\labelrel\lesssim{eq:lemma:basic-sum-estimate}
\delta^{3} \delta^{-n}\log (1/\delta)
\sum_{\substack{
j \\ |x_i-x_j| \gtrsim \sqrt{\delta}
}} 
\frac{1}{|x_i-x_j|^2}
\\
&\labelrel\lesssim{eq:lemma:basic-sum-estimate}
\delta^{3} \delta^{-n}\log (1/\delta) \delta^{-n} \delta^{-1/2}
\end{align*}
\endgroup

For the third term, we use the remaining volume bound for the intersection of three unit spheres.  This gives
\begingroup\allowdisplaybreaks
\begin{align*}
\sum_{\substack{
j,k \\ |x_i-x_k| \leq |x_i-x_j| \\ m_{ijk} \gtrsim \sqrt{\delta} \\ \frac{1}{2} \leq R_{ijk} \leq 2
}}  
&\labelrel\lesssim{eq:circumradius-near-1-estimate}
\delta^{5/2}
\sum_{\substack{
j,k  \\ |x_i-x_k| \leq |x_i-x_j| \\ m_{ijk} \gtrsim \sqrt{\delta} \\ \frac{1}{2} \leq R_{ijk} \leq 2
}} 
\frac{1}{M_{ijk}^{3/2} m_{ijk}^{1/2}} 
\\
&\labelrel\lesssim{eq:M-eq-ab}
\delta^{5/2}
\sum_{\substack{
j \\ |x_i-x_j| \gtrsim \sqrt{\delta}
}} 
\frac{1}{|x_i-x_j|^{3/2}}
\sum_{\substack{
k  \\ |x_i-x_k| \leq |x_i-x_j| \\ m_{ijk} \gtrsim \sqrt{\delta}
}} 
\frac{1}{m_{ijk}^{1/2}}
\\
&\labelrel\leq{eq:m-eq-ac-or-bc}
\delta^{5/2}
\sum_{\substack{
j \\ |x_i-x_j| \gtrsim \sqrt{\delta}
}} 
\frac{1}{|x_i-x_j|^{3/2}}
\left(
\sum_{\substack{
k \\ \sqrt{\delta} \lesssim |x_i-x_k| \leq |x_i-x_j|
}} 
\hspace{-.7cm}
\frac{1}{|x_i-x_k|^{1/2}}
+
\sum_{\substack{
k \\ \sqrt{\delta} \lesssim |x_j-x_k| \lesssim |x_i-x_j|
}} 
\hspace{-.7cm}
\frac{1}{|x_j-x_k|^{1/2}}
\right)
\\
&\labelrel\lesssim{eq:lemma:basic-sum-estimate}
\delta^{5/2}
\delta^{-n}
\sum_{\substack{
j \\ |x_i-x_j| \gtrsim \sqrt{\delta}
}} 
\frac{1}{|x_i-x_j|^{3/2}}|x_i-x_j|^{1/2}
\\
&\labelrel\lesssim{eq:lemma:basic-sum-estimate}
\delta^{5/2}
\delta^{-n}
\delta^{-n}
\log(1/\delta)
\end{align*}
\endgroup
This completes the proof of \eqref{eq:sum-jk-desired-bound}.
\end{proof}

\subsection{Four spheres}

We again begin with some informal remarks. Unlike in the proofs of \eqref{eq:sum-j-desired-bound} and \eqref{eq:sum-jk-desired-bound}, here we do not need an analogue of \Cref{lem:intersection} and \Cref{lem:three_spheres} for four unit spheres. If four unit spheres in $\R^4$ are centered on $S^2 \times \{0\} \subset \R^4$, then the intersection of the $\delta$-neighborhoods has dimensions approximately $\delta \times \delta \times \delta \times \delta^{1/2}$. However, it turns out there is an arrangement with larger intersections.

If we place unit spheres centered on the circle $\frac{1}{\sqrt{2}}S^1 \times \{(0,0)\} \subset \R^4$, then all of these spheres contain the circle $\{(0,0)\} \times \frac{1}{\sqrt{2}}S^1$. This implies that the intersection of the $\delta$-neighborhoods of four spheres has volume like $\delta^3$. (This is related to the ``radius $1/\sqrt{2}$'' example in the proof of \Cref{prop:NDeltaLowerBd}.) 

Note that in this configuration,
\[
S_i \cap S_j \cap S_k = S_i \cap S_j \cap S_k \cap S_\ell = \{(0,0)\} \times \frac{1}{\sqrt{2}}S^1,
\]
so the fourth sphere does not contribute to the intersection. This suggests that the trivial estimate
\begin{align}
\label{eq:bound-4-by-3}
|S_i^\delta \cap S_j^\delta \cap S_k^\delta \cap S_\ell^\delta|
\leq
|S_i^\delta \cap S_j^\delta \cap S_k^\delta|
\end{align} 
might not lose too much in some situations. Indeed, our proof begins by relabeling $j, k, \ell$ and applying \eqref{eq:bound-4-by-3}, and this is why we do not need an analogue of \Cref{lem:three_spheres} for four spheres.

\begin{proof}[Proof of \eqref{eq:sum-jkl-desired-bound}]
Let $m_{ijk\ell}$ denote the length of the shortest edge of the tetrahedron formed by $x_i, x_j, x_k, x_\ell$. By symmetry in $j,k,\ell$, we can assume the shortest side length is $|x_i - x_\ell|$ or $|x_j - x_\ell|$. Under this assumption, we use \eqref{eq:bound-4-by-3}, giving us the estimate
\begin{align}
\sum_{\substack{
j,k,\ell 
}}
|S_i^\delta \cap S_j^\delta \cap S_k^\delta \cap S_\ell^\delta|
\lesssim
\sum_{\substack{
j,k,\ell \\
m_{ijk\ell} = \min(|x_i - x_\ell|,|x_j - x_\ell|)
}}
|S_i^\delta \cap S_j^\delta \cap S_k^\delta|
\end{align}
We further split the sum into two terms, depending on whether $m_{ijk\ell} \lesssim \sqrt{\delta}$ or $m_{ijk\ell} \gtrsim \sqrt{\delta}$, and we now bound each of these terms separately.

For the $m_{ijk\ell} \lesssim \sqrt{\delta}$ term, we use \eqref{eq:sum-jk-desired-bound}.
\begin{align*}
\sum_{\substack{
j,k,\ell \\
m_{ijk\ell} = \min(|x_i - x_\ell|,|x_j - x_\ell|) \\
m_{ijk\ell} \lesssim \sqrt{\delta}
}}
&\leq
\sum_{\substack{
j,k
}}
|S_i^\delta \cap S_j^\delta \cap S_k^\delta|
\sum_{\substack{
\ell \\
\min(|x_i - x_\ell|,|x_j - x_\ell|) \lesssim \sqrt{\delta}
}}
1
\\
&\labelrel\lesssim{eq:lemma:basic-sum-estimate}
\delta^{-n} \delta^{1/2}
\sum_{\substack{
j,k
}}
|S_i^\delta \cap S_j^\delta \cap S_k^\delta|
\\
&\labelrel\lesssim{eq:sum-jk-desired-bound}
\delta^{-n} \delta^{1/2} \delta^{5/2-2n} \log(1/\delta).
\end{align*}

For the $m_{ijk\ell} \gtrsim \sqrt{\delta}$ term, we use the $n \geq 4$ case of \Cref{lem:three_spheres}, the fact that $m_{ijk} \geq m_{ijk\ell}$, and arguments similar to those of \Cref{subsection:L32}.
\begingroup\allowdisplaybreaks
\begin{align*}
\sum_{\substack{
j,k,\ell \\
m_{ijk\ell} = \min(|x_i - x_\ell|,|x_j - x_\ell|) \\
m_{ijk\ell} \gtrsim \sqrt{\delta}
}}
&\labelrel\lesssim{eq:circumradius-small-estimate}
\delta^3
\sum_{\substack{
j,k \\ m_{ijk} \gtrsim \sqrt{\delta}
}} 
\frac{1}{M_{ijk}^2 m_{ijk}}
\sum_{\substack{
\ell \\
\min(|x_i - x_\ell|,|x_j - x_\ell|) \leq m_{ijk}
}}
1
\\
&\labelrel\lesssim{eq:lemma:basic-sum-estimate}
\delta^3
\delta^{-n}
\sum_{\substack{
j,k \\ m_{ijk} \gtrsim \sqrt{\delta}
}} 
\frac{1}{M_{ijk}^2}
\\
&\lesssim
\delta^3
\delta^{-n}
\sum_{\substack{
j,k \\ |x_i - x_k| \leq |x_i - x_j| \\ m_{ijk} \gtrsim \sqrt{\delta}
}} 
\frac{1}{M_{ijk}^2}
\\
&\labelrel\lesssim{eq:M-eq-ab}
\delta^3
\delta^{-n}
\sum_{\substack{
j \\  |x_i - x_j| \gtrsim \sqrt{\delta}
}} 
\frac{1}{|x_i - x_j|^2}
\sum_{\substack{
k \\ \sqrt{\delta} \lesssim |x_i - x_k| \leq |x_i - x_j|
}} 
1
\\
&\labelrel\lesssim{eq:lemma:basic-sum-estimate}
\delta^3
\delta^{-n}
\delta^{-n}
\sum_{\substack{
j \\  |x_i - x_j| \gtrsim \sqrt{\delta}
}} 
\frac{1}{|x_i - x_j|}
\\
&\labelrel\lesssim{eq:lemma:basic-sum-estimate}
\delta^3
\delta^{-n}
\delta^{-n}
\delta^{-n}
\log(1/\delta)
\end{align*}
\endgroup

This completes the proof of \eqref{eq:sum-jkl-desired-bound}.
\end{proof}

\begin{remark}\label{remark:NT-same-bounds-as-NM}
Let $T \subset \R^n$. For $\delta \in (0,1/2)$, we define  
\[ \NT f(x) = \sup_{u\in T}  \left|\frac{1}{|S^{\delta}(0)|} \int_{S^\delta(0)} f(x+u+y)dy \right|. \]
Note that $\NT = \NM$ for $T=\S^{n-1}$.

If $T$ is any compact set with finite $(n-1)$-dimensional upper Minkowski content (recall \eqref{eqn:coverassume}), then \Cref{theorem:NM-upper-bounds}\ref{theorem:NM-upper-bounds-item:upper-bounds} also holds if we replace $\NM$ with $\NT$. To see this, we only need to make the following changes to \Cref{lemma:basic-sum-estimate}: 
\begin{enumerate}
    \item In the statement of the lemma, we replace $|\omega_j - x_j| = 1$ with $x_j - \omega_j \in T$. 
    \item In the proof, if $\delta \leq \rho \leq 1$ and $|a-x_j| \lesssim \rho$, then $\omega_j-a$ must be in the $O(\rho)$-neighborhood of $-T$. Since $\{\omega_j\}$ is $\delta$-separated, the covering condition on $T$ implies \eqref{eq:lemma:basic-sum-estimate:counting}. The rest of the proof is unchanged.
\end{enumerate}
\end{remark}
 
\section{The Nikodym maximal function $\NS$ associated with spheres of varying radii}\label{sec:NS}
In this section, we give a proof of \Cref{theorem:NS-upper-bounds}\ref{theorem:NS-upper-bounds-item:upper-bounds}. In fact, we will prove a slightly more general result. For a compact set $T\subset \R^n$, consider the maximal function 
\[ \NS_T f(x) = \sup_{u\in T} \sup_{1\leq t\leq 2} \frac{1}{|S^{\delta}(0)|} \left|\int_{S^\delta(0)} f(x+t(u+y))dy \right|.\]
We recall that $\NS= \NS_T$ for $T=\S^{n-1}$. 

\begin{theorem}\label{thm:NS3}
Let $n\geq 3$. Suppose that $T\subset \R^n$ is a compact set with finite $(n-1)$-dimensional upper Minkowski content (see \eqref{eqn:coverassume}). Then
\begin{align*}
 \|\NS_T f \|_{L^2(\R^n)} &\les (\log \delta^{-1})^{1/2} \|f \|_{L^2(\R^n)}.
\end{align*}
\end{theorem}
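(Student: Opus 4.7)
The plan is to adapt the geometric-duality argument used for \eqref{eq:NM-L2} in \Cref{sec:L2unitsphere} to handle the dilation parameter $t \in [1,2]$ and general $T$ with finite $(n-1)$-dimensional Minkowski content.

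A verbatim generalization of \Cref{lem:duality-unit-radius} reduces $\|\NS_T\|_{L^2 \to L^2}^2$ to estimating $\bigl\|\sum_i a_i \mathbf{1}_{S_i^{O(\delta)}}\bigr\|_{L^2}^2$, where $\{\omega_i\}$ is a $\delta$-separated subset of a bounded region, $S_i$ is a sphere of radius $t_i \in [1,2]$ centered at $c_i := \omega_i + t_i u_i$ for some $u_i \in T$, and $a_i \geq 0$ satisfies $\delta^n \sum_i a_i^2 = 1$. Expanding via Cauchy--Schwarz, then partitioning $\R^n$ into cubes of diameter $c_0 < 2$ (within each cube $|c_i - c_j| < 2 \leq t_i + t_j$ forbids external tangency) and invoking bounded overlap, the task reduces to showing
\[
\sum_j |S_i^\delta \cap S_j^\delta| \lesssim \delta^{2-n}\log(1/\delta)
\]
for each fixed $i$, summing over $j$'s whose center lies in the same cube as $c_i$.

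This is obtained by combining two ingredients. First, a volume bound valid for $n \geq 3$: for spheres of radii $t_i, t_j \in [1,2]$ with $|c_i - c_j| \leq c_0$,
\[
|S_i^\delta \cap S_j^\delta| \lesssim \frac{\delta^2}{|c_i - c_j| + \delta},
\]
with the intersection vanishing unless $|t_i - t_j| \lesssim |c_i - c_j| + \delta$. The mechanism is the identity $\sin \phi_{ij} = r_{ij}|c_i - c_j|/(t_i t_j)$, where $r_{ij}$ and $\phi_{ij}$ are the radius and dihedral angle of the intersection $(n-2)$-sphere $S_i \cap S_j$: the transverse cross-sectional area $\asymp \delta^2/\sin \phi_{ij}$ times the ``length'' $\asymp r_{ij}^{n-2}$ of the intersection sphere collapses to $r_{ij}^{n-3}(t_i t_j)\delta^2/|c_i - c_j| \lesssim \delta^2/|c_i - c_j|$, using $r_{ij} \lesssim 1$ and $n - 3 \geq 0$. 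Second, a density bound in the $(n{+}1)$-dimensional parameter space: for $\rho \geq \delta$,
\[
\#\{j : (c_j, t_j) \in B_\rho(c_i, t_i)\} \lesssim \rho\,\delta^{-n}.
\]
Indeed, $\omega_j = c_j - t_j u_j$ must lie in the Minkowski difference $B(c_i, \rho) - \{tu : |t - t_i| \leq \rho,\, u \in T\}$, whose volume is $\lesssim \rho$ by covering $T$ with $\lesssim \rho^{-(n-1)}$ balls of radius $\rho$ (using \eqref{eqn:coverassume}) and noting that each contributes $O(\rho^n)$ volume under $(t,u) \mapsto tu$; the $\delta$-separation of $\{\omega_j\}$ then gives the stated count.

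A dyadic decomposition over $\rho_k = 2^k \delta \in [\delta, O(1)]$, combined with the support constraint $|t_i - t_j| \lesssim \rho_k$, yields
\[
\sum_j |S_i^\delta \cap S_j^\delta| \lesssim \sum_{k=0}^{O(\log(1/\delta))} \rho_k \delta^{-n} \cdot \frac{\delta^2}{\rho_k} \lesssim \log(1/\delta) \cdot \delta^{2-n},
\]
as required. I expect the main technical obstacle to be the varying-radius volume bound, which extends \eqref{eq:intersection-2-spheres-vary-special}: one must establish the identity $\sin \phi_{ij} = r_{ij}|c_i - c_j|/(t_i t_j)$ and verify that near-tangent configurations (internal, $|c_i - c_j| \approx |t_i - t_j|$, or external, $|c_i - c_j| \approx t_i + t_j$) contribute at most $\delta^{(n+1)/2} \leq \delta^2$ for $n \geq 3$ and are thereby absorbed into the right-hand side. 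This bound is presumably a component of the volume lemmas collected in \Cref{section:volumebound}.
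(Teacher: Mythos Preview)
Your proposal is correct and follows essentially the same route as the paper: duality as in \Cref{lem:duality-unit-radius}, the two-annulus volume bound \eqref{eq:intersection-2-spheres-vary-special} from \Cref{lem:intersection}, and the parameter-space counting estimate (which is exactly \Cref{lem:count} with $s=n-1$), combined by a dyadic sum to produce the $\log(1/\delta)$ factor. The only cosmetic difference is that the paper works throughout with the single quantity $|S_i-S_j|:=|c_i-c_j|+|t_i-t_j|$ in place of your pair ``$|c_i-c_j|$ plus the support constraint $|t_i-t_j|\lesssim |c_i-c_j|+\delta$''; since the intersection is empty unless the latter constraint holds, the two formulations are equivalent.
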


\begin{theorem}\label{thm:NS2}
Let $n=2$.  Suppose that $T\subset \R^n$ is a compact set with finite $(n-1)$-dimensional upper Minkowski content (see \eqref{eqn:coverassume}). Then for any $\epsilon>0$,
\[ \|\NS_T f \|_{L^3(\R^2)} \les_\epsilon \delta^{-\epsilon} \|f \|_{L^3(\R^2)}. \]
\end{theorem}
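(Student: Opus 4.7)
The plan is to implement the strategy sketched in the introduction: linearize $\NS_T$, dualize, and reduce the theorem to the sharp $L^{3/2}$ estimate provided by \Cref{prop:M2} (the circular case of the Pramanik--Yang--Zahl theorem) applied to a collection of circles satisfying the one-dimensional non-concentration condition \eqref{eqn:non-concentration}.

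First I would linearize. Let $\{x_i\}_{i \in I}$ be a maximal $\delta$-separated subset of a large ball in $\R^2$. For each $i$, choose in a Borel fashion a pair $(u_i, t_i) \in T \times [1,2]$ realizing, up to a factor of $2$, the supremum in the definition of $\NS_T f(x_i)$. The change of variables $z = x_i + t_i(u_i + y)$ identifies the average appearing in the definition of $\NS_T$ with (a constant multiple of) the average of $f$ over $C_i^\delta$, where $C_i$ is the circle of radius $t_i$ centered at $c_i := x_i + t_i u_i$, and $|C_i^\delta| \sim \delta$ since $t_i \in [1,2]$. A duality argument along the lines of \Cref{lem:duality-unit-radius}, adapted to the $L^3 \to L^3$ bound and to varying radii, then reduces the theorem to the estimate
\[
\Bigl\| \sum_{i \in I} a_i\, 1_{C_i^\delta} \Bigr\|_{L^{3/2}(\R^2)}^{3/2}
\;\les_\epsilon\; \delta^{1/2 - \epsilon}\, \sum_{i \in I} a_i^{3/2}
\]
for any $a_i \geq 0$ and any collection of circles $\{C_i\}$ arising from such a linearization.

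Next I would prove the one-dimensional non-concentration condition \eqref{eqn:non-concentration} for the collection $\{(c_i, t_i)\}_{i \in I} \subset \R^2 \times [1,2]$. Fix a ball $B = B((c_0, t_0), r)$ with $r \geq \delta$. If $(c_i, t_i) \in B$, then $x_i = c_i - t_i u_i$ lies in
\[
E_B := \{c - tu : c \in B(c_0, r),\ |t - t_0| \leq r,\ u \in T\},
\]
which is contained in the $O(r)$-neighborhood of the compact set $c_0 - t_0 T \subset \R^2$ (using that $T$ is bounded). Since $T$ has finite $1$-dimensional upper Minkowski content in $\R^2$, this neighborhood has Lebesgue measure $\les r$. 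Because the $\{x_i\}$ are $\delta$-separated, at most $|E_B|/\delta^2 \les \delta^{-2} r$ of them lie in $E_B$, proving \eqref{eqn:non-concentration}.

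At this point I would invoke \Cref{prop:M2}, whose hypothesis is exactly the one-dimensional non-concentration condition and whose conclusion is a sharp $L^{3/2}$ bound on weighted sums $\sum_i a_i 1_{C_i^\delta}$ with the $\delta^{-\epsilon}$ loss displayed above. Unwinding the duality then yields the theorem. The main obstacle is bookkeeping rather than any single deep step: one must confirm that the radii $t_i \in [1,2]$ give $|C_i^\delta| \sim \delta$ uniformly, that the constant in \eqref{eqn:non-concentration} produced by the Minkowski content hypothesis matches the normalization required by \Cref{prop:M2}, and that the Borel selection of $(u_i, t_i)$ causes no measurability problems. The genuinely deep input --- sharp $L^{3/2}$ estimates for sums of circular annuli with one-dimensional non-concentration --- is used as a black box through \Cref{prop:M2}.
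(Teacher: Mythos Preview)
Your proposal is correct and follows essentially the same route as the paper: discretize, dualize via \Cref{lem:duality-unit-radius}, verify the one-dimensional non-concentration condition (your argument is exactly the $n=2$, $s=1$ case of \Cref{lem:count}), and feed the resulting family of circles into \Cref{prop:M2}. One small point of bookkeeping you gloss over: \Cref{prop:M2} as stated in the paper is a \emph{restricted} strong-type estimate (all $a_i=1$), not the weighted version you write down; the paper explicitly upgrades this to the strong-type inequality $\|\sum_i a_i 1_{C_i^\delta}\|_{3/2} \less A^{1/3}(\delta\sum_i a_i^{3/2})^{2/3}$ by an interpolation argument (pointing to \Cref{prop:duality}), so you should flag that step rather than absorb it into the black box.
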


Note that \Cref{theorem:NS-upper-bounds}\ref{theorem:NS-upper-bounds-item:upper-bounds} is a consequence of interpolations between bounds from Theorems \ref{thm:NS3} and \ref{thm:NS2} and trivial $L^1$ and $L^\infty$ bounds.

\subsection{Proof of Theorem \ref{thm:NS3}}
We first note that the maximal function $\NS_T$ is local in the sense that if $f$ is a function supported on a ball $B$ of radius $1$, then $\NS_T f$ is supported on a ball of radius $O(1)$ sharing the same center with $B$. Therefore, it is sufficient to bound $\NS_T f$ on a ball of radius 1. 

We first generalize the counting result \eqref{eq:lemma:basic-sum-estimate:counting} to any compact $T$ satisfying \eqref{eqn:coverassume} for $s=n-1$. In what follows, we identify a sphere $S(x,t) = \{ y\in \R^n: |y-x|=t \}$ with the point $(x,t)\in \R^{n+1}$. Accordingly, given spheres $S_i$ and $S_j$, $|S_i - S_j|$ denotes the usual distance between two points in $\R^{n+1}$. 

\begin{lemma}\label{lem:count} Let $0\leq s\leq n$. Suppose that $T\subset \R^n$ is a compact set such that $N(T,\delta) \les \delta^{-s}$  for all $\delta\in (0,1/2)$. Let $\{\omega_i\}$ be a $\delta$-separated set of points of $\R^n$. For each $i$, we let $S_i=S_i(x_i,t_i)$ be a sphere, where $t_i \in [1,2]$ and $x_i = w_i+ t_iu_i$ for some $u_i\in T$. Then 
\[  \#\{i : S_i \in B_\rho \}
\les \delta^{-n} \rho^{n-s}  \]
for any ball $B_\rho \subset \R^{n+1}$ of radius $\delta\leq \rho \leq 1$.
\end{lemma}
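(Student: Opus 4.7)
The plan is to transfer the counting condition $S_i \in B_\rho$ from sphere-space $\R^{n+1}$ into a location constraint on the base points $\omega_i \in \R^n$, and then combine the $\delta$-separation of $\{\omega_i\}$ with the covering hypothesis \eqref{eqn:coverassume} on $T$.

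Write $B_\rho$ as a ball of radius $\rho$ centered at some $(x^*, r^*) \in \R^{n+1}$. For an index $i$ counted on the left-hand side we have $|x_i - x^*| \leq \rho$ and $|t_i - r^*| \leq \rho$, and by hypothesis $\omega_i = x_i - t_i u_i$ for some $u_i \in T$. The key identity
\[
\omega_i - (x^* - r^* u_i) = (x_i - x^*) - (t_i - r^*)u_i,
\]
combined with the compactness of $T$ (so $|u_i|$ is uniformly bounded), gives $|\omega_i - (x^* - r^* u_i)| \les \rho$. Hence every such $\omega_i$ lies in the $O(\rho)$-neighborhood of the set $E := \{x^* - r^* u : u \in T\}$, which is a translated and dilated copy of $T$.

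Since $t_i \in [1,2]$ together with $|t_i - r^*| \leq \rho$ forces $r^*$ to be of order one (for $\rho$ in a bounded range), the hypothesis \eqref{eqn:coverassume} gives $N(E, \rho) = N(T, \rho/r^*) \les \rho^{-s}$. A standard covering step — enlarge each $\rho$-ball of a minimal cover by a constant factor and subdivide it into $\delta$-balls — yields
\[
N(E^{O(\rho)}, \delta) \les \rho^{-s} \cdot (\rho/\delta)^{n} = \delta^{-n}\rho^{n-s}.
\]
The $\delta$-separation of $\{\omega_i\}$ then bounds the number of indices with $\omega_i \in E^{O(\rho)}$ by this covering number, giving the claimed estimate.

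There is no substantive obstacle: once the correct target set $E$ is identified, the rest is a routine covering count. The only mild care needed is ensuring the dilation factor $r^*$ stays bounded above and below, which is automatic in the relevant small-$\rho$ regime; for $\rho$ comparable to or larger than $1$, the bound $\delta^{-n}\rho^{n-s}$ is already at least $\delta^{-n}$, which is absorbed by the trivial total count under the compact-support assumption in the application to \Cref{thm:NS2}.
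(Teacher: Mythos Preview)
Your proof is correct and follows essentially the same route as the paper: both identify the target set $E = x^* - r^*T$, use the triangle inequality (your ``key identity'') together with compactness of $T$ to place each $\omega_i$ in $E^{O(\rho)}$, and then count $\delta$-separated points via the covering hypothesis on $T$. The paper phrases the final step as a volume bound $|E^{O(\rho)}| \lesssim \rho^{n-s}$ rather than your covering-number computation $N(E^{O(\rho)},\delta)\lesssim \delta^{-n}\rho^{n-s}$, but these are equivalent. Your remark about large $\rho$ is a fair caveat; the paper handles it implicitly by reducing (without loss of generality) to a center with $t$-coordinate in $[1,2]$, which keeps the dilation factor bounded.
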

\begin{proof}
Let $(x,t)\in \R^{n+1}$ be the center of $B_\rho$. Since $t_i\in [1,2]$, without loss of generality, we may assume that $t\in [1,2]$. We claim that $\omega_i$ belongs to $\nbd{x-tT}{O(\rho)}$ if $S_i \in B_\rho$. Since the volume of $\nbd{x-tT}{O(\rho)}$ is $O(\rho^{n-s})$, it may contain $O(\delta^{-n}\rho^{n-s})$ $\delta$-separated set of points $\omega_i$, which completes the proof. 

To verify the claim, it suffices to show that $|\omega_i - (x - t u_i) | = O(\rho)$. By the triangle inequality,
\[ |\omega_i - (x - t u_i) |  \leq |x_i - x| + |t_i-t| |u_i|.\]
The claim follows from the assumption $S_i \in B_\rho$ and the compactness of $T$.
\end{proof}

We are ready to prove Theorem \ref{thm:NS3}. By the same duality argument for $\NM$, it suffices to prove the following; Let $\{\omega_i\}$ be a $\delta$-separated subset of $\R^n$. Then
\begin{equation}\label{eqn:NS3:dual}
    \delta^{n-1} \sup \| \sum_{i} a_i 1_{S_i^{\delta}} \|_{L^{2}(\R^n)} \lesssim \log \delta^{-1}
\end{equation}
where the supremum is taken over all choices of spheres $S_i=S_i(x_i,t_i)$, where $x_i = w_i+t_i u_i$ for some $t_i\in [1,2]$ and $u_i\in T$, and $a_i\geq0$ such that $\delta^n \sum_{i} a_i^{2} = 1$. We know that by \Cref{lem:count}, any such collection of spheres satisfies the bound
\begin{equation} \label{eq:NS2:count}
    \#\{i : S_i \in B_\rho \} \leq A\rho
\end{equation}
for $A\sim \delta^{-n}$, for any ball $B_\rho \subset \R^{n+1}$ of radius $\delta\leq \rho \leq 1$. Hence, \eqref{eqn:NS3:dual} is a consequence of the following. 

\begin{proposition} \label{prop:NS3dual} Let $n\geq 3$, $\delta \in (0,1/2)$, and $A>0$. Let $\{ S_i \}$ be a collection of spheres of radius comparable to 1 satisfying \eqref{eq:NS2:count} for any ball $B_\rho \subset \R^{n+1}$ of radius $\delta\leq \rho \leq 1$. Then  
\[ \| \sum_{i} a_{i} 1_{S_i^\delta} \|_{L^{2}(\R^n)} \les \delta \log \delta^{-1} A^{1/2}  \left(\sum_{i} a_i^{2}\right)^{1/2} \]
for any $a_i \in [0,\infty)$.
\end{proposition}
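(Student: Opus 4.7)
My approach is to open the $L^2$ norm and reduce to a uniform row-sum estimate for the Gram matrix $\{|S_i^\delta \cap S_j^\delta|\}$. By symmetrization $2a_i a_j \leq a_i^2 + a_j^2$,
\[ \Bigl\| \sum_i a_i 1_{S_i^\delta} \Bigr\|_{L^2(\R^n)}^2 = \sum_{i,j} a_i a_j \, |S_i^\delta \cap S_j^\delta| \leq \sum_i a_i^2 \sum_j |S_i^\delta \cap S_j^\delta|, \]
so it suffices to prove
\[ \sup_i \sum_j |S_i^\delta \cap S_j^\delta| \les A \delta^2 \log \delta^{-1}. \]
The claim then follows upon taking square roots, in fact giving the slightly stronger $(\log \delta^{-1})^{1/2}$ in place of $\log \delta^{-1}$.

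The geometric crux is an intersection estimate extending \eqref{eq:intersection-2-spheres-vary-special} to spheres of radii in $[1,2]$: in $\R^n$ with $n \geq 3$,
\[ |S_i^\delta \cap S_j^\delta| \les \frac{\delta^2}{\|S_i - S_j\|_{\R^{n+1}} + \delta}, \]
where the distance is computed under the identification $S(x,t) \leftrightarrow (x,t)$ used in \Cref{lem:count}. The hypothesis $n \geq 3$ is essential: a near-tangent pair of $(n-1)$-spheres has intersection volume of order $\delta^{(n+1)/2}$, which sits under the $\delta^2$ ceiling only when $n \geq 3$. I would verify this bound by a standard local computation in normal coordinates at a point of $S_i \cap S_j$, tracking the $(n-2)$-area of the intersection manifold against the angle $\theta$ between the outward normals, and checking that $\sin \theta \gtrsim \|S_i - S_j\|_{\R^{n+1}}$ in the transverse regime; alternatively one cites the volume bounds of \Cref{section:volumebound}.

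With the intersection bound in hand, a dyadic decomposition in the distance $\|S_i - S_j\|_{\R^{n+1}} \sim 2^k \delta$ finishes the argument. The counting hypothesis \eqref{eq:NS2:count} yields at most $A \cdot 2^k \delta$ spheres in each shell, and the intersection bound contributes $\les \delta/2^k$ per pair, so each dyadic level contributes $\les A \delta^2$. Since all spheres lie in a bounded region of $\R^{n+1}$, $k$ ranges over $O(\log \delta^{-1})$ values, giving the desired bound.

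The main obstacle is the varying-radii intersection estimate: for unit spheres the formula depends only on $|x_i - x_j|$, but allowing $t_i, t_j$ to vary introduces a two-parameter family of near-degenerate configurations (external tangency, internal tangency, near-concentric pairs) that must all be controlled by a single expression in $\|S_i - S_j\|_{\R^{n+1}}$. Once this geometric lemma is in place, the rest is a short counting argument.
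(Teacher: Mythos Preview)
Your approach is correct and matches the paper's own proof: expand the $L^2$ norm, symmetrize to reduce to a uniform row-sum estimate, apply the two-sphere intersection bound $|S_i^\delta \cap S_j^\delta| \lesssim \delta^2/(|S_i-S_j|+\delta)$ for $n\geq 3$, and sum dyadically using the counting hypothesis \eqref{eq:NS2:count}. The varying-radii intersection estimate you flag as the main obstacle is already established in \Cref{lem:intersection} (specifically \eqref{eq:intersection-2-spheres-vary-special}) for radii $r,s \in [1/2,2]$, since the quantity $d = |a-b|+|r-s|$ there is comparable to the Euclidean distance $|S_i-S_j|$ in $\R^{n+1}$; your observation that the argument actually yields $(\log \delta^{-1})^{1/2}$ rather than $\log \delta^{-1}$ is also correct.
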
 
\begin{proof}
The proof is a minor modification of the proof of the $L^2$ bound \eqref{eq:NM-L2} for $\NM$. Therefore, we only indicate necessary modifications. In Section \ref{sec:L2unitsphere}, we replace $|x_i- x_j|$ by $|S_i - S_j|$ and use \Cref{lemma:basic-sum-estimate} with $|a-x_j|$ replaced by $|S_i- S_j|$.
\end{proof}

\Cref{prop:NS3dual} cannot be extended to the $n=2$ case. This is due to the fact that the volume bound $|S_i^\delta \cap S_j^\delta|  \les
\frac{\delta^2}{|S_i-S_j|+ \delta}$ used in the proof fails to hold for $n=2$ in general; indeed, $|S_i^\delta \cap S_j^\delta|\sim \delta^{3/2}$ for two internally tangent circles $S_i$ and $S_j$ such that $|S_i-S_j| \sim 1$.

\subsection{Proof of Theorem \ref{thm:NS2}}
By the duality argument in \Cref{lem:duality-unit-radius}, it suffices to prove that, for $p'=3/2$, 
\begin{equation}\label{eqn:NS2:dual}
    \delta \sup \| \sum_{i} a_i 1_{C_i^{\delta}} \|_{L^{p'}(\R^2)} \less 1,
\end{equation}
where the supremum is taken over all choices of circles $C_i=C_i(x_i,t_i)$, where $x_i = w_i+t_i u_i$ for a $\delta$-separated set of points $\{w_i\}$,  $t_i\in [1,2]$ and $u_i\in T$, and $a_i\geq0$ such that $\delta^2 \sum_{i} a_i^{p'} = 1$. As in the proof of Theorem \ref{thm:NS3}, \eqref{eqn:NS2:dual} is a consequence of the following.

\begin{proposition}\label{prop:M2} Let $n=2$, $\delta \in (0,1/2)$, $0 < A < \delta^{-O(1)}$. Let $\{ C_i \}_{i\in I}$ be a collection of circles of radius comparable to 1 satisfying 
\begin{align}
\label{eq:M2-nonconcentration}
\# \{i \in I : C_i \in B \} \leq A r/\delta    
\end{align}
for any ball $B \subset \R^3$ of radius $\delta\leq r \leq 1$.
Then  
\[ 
\| \sum_{i \in I} 1_{C_i^\delta} \|_{L^{3/2}(\R^2)} \leq C \delta^{-\epsilon} A^{1/3}   (\delta \# I )^{2/3}, 
\]
where $C$ depends only on $\epsilon$ and on the $O(1)$ constant appearing in the upper bound of $A$.
\end{proposition}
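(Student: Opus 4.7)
My strategy is to deduce the $L^{3/2}$ estimate in \Cref{prop:M2} from a Wolff-type $L^3$ incidence bound via interpolation, where the $L^3$ bound is essentially the content of \cite[Theorem 1.7]{Pramanik-Yang-Zahl} applied under the non-concentration hypothesis \eqref{eq:M2-nonconcentration}.

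\textbf{Step 1: Reduction to an $L^3$ bound.} Set $F = \sum_{i\in I} 1_{C_i^\delta}$. By H\"older,
\[
\|F\|_{3/2} \;\le\; \|F\|_{1}^{1/2}\,\|F\|_{3}^{1/2}.
\]
The trivial $L^1$ bound gives $\|F\|_1 = \sum_{i\in I}|C_i^\delta| \lesssim \delta\,\#I$, so the stated bound in \Cref{prop:M2} follows once we prove, for every $\epsilon'>0$,
\begin{equation}\label{eq:my-L3-target}
\|F\|_3^3 \;\le\; C_{\epsilon'}\,\delta^{-\epsilon'}\,A^2\,\delta\,\#I .
\end{equation}
A short computation shows that \eqref{eq:my-L3-target} combined with the interpolation bound recovers the exponents $A^{1/3}$ and $(\delta\,\#I)^{2/3}$ in the target inequality, with $\epsilon=\epsilon'/6$.

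\textbf{Step 2: Reduction to triple incidences.} Expanding the $L^3$ norm,
\[
\|F\|_3^3 \;=\; \sum_{i,j,k\in I}\bigl|C_i^\delta\cap C_j^\delta\cap C_k^\delta\bigr|,
\]
I would dyadically partition the set of triples $(i,j,k)$ according to the pairwise tangency scales. Two circles of radius $\sim 1$ meeting at angle $\theta \ge \delta$ have $|C_i^\delta\cap C_j^\delta|\lesssim \delta^{2}/\theta$, and a similar bound controls the triple intersection in terms of the smallest pairwise angle of tangency. Hence in each dyadic class the pointwise intersection volume is cheaply controlled, and the problem reduces to counting how many triples fall into each dyadic class.

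\textbf{Step 3: Tangency count.} Counting, in each dyadic class, the number of triples of circles that are pairwise $\tau$-tangent is precisely the incidence problem addressed by \cite[Theorem 1.7]{Pramanik-Yang-Zahl}; their theorem applies under a non-concentration assumption of exactly the shape \eqref{eq:M2-nonconcentration} and yields the quantitative counts we need. Feeding their bound into Step 2, summing over the $O(\log \delta^{-1})$ dyadic scales of $\tau$, and absorbing the logarithm into $\delta^{-\epsilon'}$ gives \eqref{eq:my-L3-target}.

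\textbf{Main obstacle.} The actual difficulty lies entirely in the tangency incidence bound of Step 3: near-tangent triples of circles can have intersections substantially larger than the generic $O(\delta^3)$, and the naive pigeonhole count is not strong enough to produce the $A^2$ factor. Controlling this is exactly the Wolff-type two-ends / induction-on-scales argument developed in \cite{Pramanik-Yang-Zahl}, which I would apply as a black box. The remaining bookkeeping is to verify that \eqref{eq:M2-nonconcentration} matches their hypothesis, and that the exponents of $A$ and of $\delta\,\#I$ combine as claimed in \eqref{eq:my-L3-target}.
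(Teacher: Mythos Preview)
Your reduction in Step~1 is clean, but the target $L^3$ bound \eqref{eq:my-L3-target} is false, so the proof cannot go through. Take $\#I=\delta^{-1}$ unit circles through the origin with centers $\delta$-separated on $\S^1$. In parameter space these lie on a curve, so \eqref{eq:M2-nonconcentration} holds with $A\sim 1$, and $\delta\,\#I\sim 1$; your claim \eqref{eq:my-L3-target} would give $\|F\|_3^3\lessapprox 1$. But for $|x|=\rho\in[\delta,1]$ one has $F(x)\sim\rho^{-1}$ (the annulus $C_i^\delta$ contains $x$ iff $|c_i\cdot x|\lesssim\delta$, which pins $c_i$ to an arc of length $\sim\delta/\rho$), so $\int F^3\gtrsim\int_\delta^1 r^{-3}\cdot r\,dr\sim\delta^{-1}$. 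The same computation gives $\|F\|_{3/2}^{3/2}\sim 1$, consistent with the proposition---so the $L^{3/2}$ estimate is genuinely sharper than anything recoverable by interpolating an $L^3$ bound against the trivial $L^1$ bound. The point is that the high-multiplicity region near a common focus is small enough for the $3/2$-power to be integrable but not for the cube.

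You also mischaracterize \cite[Theorem~1.7]{Pramanik-Yang-Zahl}: it is already an $L^{3/2}$ estimate of the form $\|\sum_i 1_{C_i^\delta}\|_{3/2}^{3/2}\lessapprox\delta\,\#I$, valid when the non-concentration constant is $\sim 1$, not a tangency count to be fed into an $L^3$ computation. The paper's proof exploits this directly: it passes from the given family (with constant $A$) to a random subfamily $J\subset I$ obtained by keeping each circle independently with probability $p\sim\delta^{\epsilon}/A$. Chernoff bounds show $J$ satisfies \eqref{eq:M2-nonconcentration} with constant $\sim 1$, so \cite{Pramanik-Yang-Zahl} applies to $J$; meanwhile Jensen gives $\E\|\sum_{i\in J}1_{C_i^\delta}\|_{3/2}^{3/2}\ge p^{3/2}\|\sum_{i\in I}1_{C_i^\delta}\|_{3/2}^{3/2}$, and the mismatch between $p^{3/2}$ on one side and $p\,\#I$ on the other is exactly what produces the factor $A^{1/2}$ in $\|F\|_{3/2}^{3/2}$, hence $A^{1/3}$ in $\|F\|_{3/2}$.
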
 

To be more precise, for \eqref{eqn:NS2:dual}, we use a strong-type inequality
\[
   \| \sum_{i \in I} a_i 1_{C_i^\delta} \|_{L^{3/2}(\R^2)} \less A^{1/3} \left(\delta\sum_{i \in I} a_i^{3/2} \right)^{2/3}  
\]
for any $a_i\geq 0$. This inequality follows from the restricted strong-type inequality in \Cref{prop:M2} by an interpolation argument (see \Cref{prop:duality} for details). Applying it with $A \sim \delta^{-1}$ gives \eqref{eqn:NS2:dual}.

\begin{proof}
\Cref{prop:M2} is a consequence of \cite{Pramanik-Yang-Zahl}. We provide some details. We may assume that $\{ C_i\}_{i\in I} \subset B$ for a fixed ball $B\subset \R^3$ of radius $\sim 1$ as we may assume that the centers of the circles $\{ C_i\}_{i\in I}$ lie in a set of diameter $\sim 1$ (cf. \eqref{eq:bounded-overlap}).  First note that if $\# I \leq A$, then \eqref{eq:M2-nonconcentration} also holds with with $(\#I) (r/\delta)$ on the right-hand side. Thus, it suffices to prove this theorem when $\# I \geq A$.

Fix $\epsilon > 0$. We define a random subset $J \subset I$ by including each $i \in I$ independently with probability $p = \delta^{\epsilon}/A$.
First, by the Chernoff bound and $\#I \geq A$,
\begin{align*}
\P\left[ \# J \geq \delta^{-2\epsilon} p \# I \right] 
\leq
\left(\frac{e p \#I}{\delta^{-2\epsilon}p \# I}\right)^{\delta^{-2\epsilon}p \# I}
\leq
\exp(-\delta^{-\epsilon})
.
\end{align*}
(In the second inequality, we assume $\delta$ is sufficiently small, depending on $\epsilon$.)
Similarly, by Chernoff and \eqref{eq:M2-nonconcentration},
\begin{align*}
\P\left[ \# \{ i \in J : C_i \in B\} \geq \delta^{-\epsilon}r/\delta\right] 
&\leq 
\left(\frac{e p \#\{ i \in I : C_i \in B\}}{\delta^{-\epsilon}r/\delta}\right)^{\delta^{-\epsilon}r/\delta}
\leq 
\exp(-\delta^{-\epsilon})
\end{align*}
for any ball $B \subset \R^3$ of radius $r \geq \delta$. (In the second inequality, we assume $\delta$ is sufficiently small, depending on $\epsilon$.)

As noted in \cite[Section 2]{Pramanik-Yang-Zahl} (see the part after equation (2.14)), with probability $1-O(\delta^{-3}\exp(-\delta^\epsilon))$, both of the following occur.
\begin{gather}
\label{eq:M2-J-not-too-big}
\# J \lesssim \delta^{-2\epsilon} p \#I
\\
\label{eq:M2-J-nonconcentration}
\# \{i \in J : C_i \in B \} \lesssim \delta^{-\epsilon} r/\delta  
\quad \text{for any ball $B \subset \R^3$ of radius $r \geq \delta$},
\end{gather}

Next, by Jensen's inequality,
\begin{align*}
\E \| \sum_{i \in J} 1_{C_i^\delta} \|_{3/2}^{3/2}
\geq
\| \E \sum_{i \in J} 1_{C_i^\delta} \|_{3/2}^{3/2} 
=
p^{3/2} \| \sum_{i \in I} 1_{C_i^\delta} \|_{3/2}^{3/2}
\end{align*}
so the event
\begin{align}
\label{eq:M2-J-norm}
\| \sum_{i \in J} 1_{C_i^\delta} \|_{3/2}^{3/2}
 \geq \frac{1}{2} p^{3/2} \| \sum_{i \in I} 1_{C_i^\delta} \|_{3/2}^{3/2}
\end{align}
occurs with probability at least $\frac{1}{2} p^{3/2} = \frac{1}{2} (\frac{\delta^\epsilon}{A})^{3/2}$.

Since $A \leq \delta^{-O(1)}$, we have $\delta^{-3}\exp(-\delta^\epsilon) \ll (\frac{\delta^\epsilon}{A})^{3/2}$. Thus, there exists a $J \subset I$ such \eqref{eq:M2-J-not-too-big}, \eqref{eq:M2-J-nonconcentration}, and \eqref{eq:M2-J-norm} all hold, so we can apply \cite[Theorem 1.7]{Pramanik-Yang-Zahl} (see the second remark after the theorem) to obtain 
\begin{align*}
p^{3/2} \| \sum_{i \in I} 1_{C_i^\delta} \|_{3/2}^{3/2}
\labelrel\lesssim{eq:M2-J-norm}
\| \sum_{i \in J} 1_{C_i^\delta} \|_{3/2}^{3/2}
\lesssim
\delta^{-O(\epsilon)} \delta \# J
\labelrel\lesssim{eq:M2-J-not-too-big}
\delta^{-O(\epsilon)} p \delta \# I,
\end{align*}
which completes the proof.
\end{proof}

\subsection{Maximal functions associated with spheres} \label{sec:maxspheres}
We discuss a class of maximal functions associated with spheres which include $\NS$ as a special case. Let $\Omega \subset \R^m$ be a set of measure $|\Omega| \les 1$ for some $m\geq 1$.  Suppose that for each  $\omega \in \Omega$, we are given a collection of spheres $
\A(\omega) \subset \R^n\times [1,2]$. Then for $f : \mathbb{R}^n \to \mathbb{R}$, we define the maximal function $M^\delta_{\A} f : \Omega \to \mathbb{R}$ by 
\begin{align}
M^\delta_{\A} f(\omega) = \sup_{S \in \A(\omega)} \frac{1}{|S^{\delta}|} \left| \int_{S^{\delta}} f \right|,
\end{align}
where $S^\delta$ denotes the $\delta$-neighborhood of the sphere $S$ in $\R^n$.

We give some concrete examples. Let $\Omega \subset \R^n$ be a ball and $T\subset \R^n$ be a compact set with finite $(n-1)$-dimensional upper Minkowski content (see \eqref{eqn:coverassume}). If we take 
\[ \A(\omega) = \{ (\omega+tu, t) \in \R^{n+1} : u\in T , t\in [1,2] \}, \] then $M^\delta_{\A} = \NS_T$. When $\Omega = [1,2]$ and 
\[ \A(\omega) = \R^n \times \{ \omega \}, \] then $M^\delta_{\A}$ is the maximal function $W^\delta$ (see \eqref{eqn:Wdelta}).

We make the following assumptions on the collection $\{\A(\omega) \}_{\omega \in \Omega}$.
\begin{enumerate}[label={(MAX\arabic*)},leftmargin=\widthof{[MAX2]}+\labelsep]
\item \label{item:comparability} For any $\omega, \omega' \in \Omega$ with $|\omega - \omega'| \leq \delta$,  $M^\delta_{\A} f(\omega) \lesssim M^{O(\delta)}_{\A} f(\omega')$.

\item \label{item:nonconcentration} Let $\{\omega_i\} \subset \Omega$ be a $\delta$-separated set and $S_i\in \A(\omega_i)$ for each $i$. Then
 \[ \#\{i : S_i \in B_\rho \} \les \delta^{-m} \rho \]
for any ball $B_\rho\subset \R^{n+1}$ of radius $\rho \in [\delta,1]$.
\end{enumerate} 

Note that these assumptions are satisfied in both of the above examples. Therefore, the following result generalizes Theorems \ref{thm:NS3} and  \ref{thm:NS2} as well as results from \cite{Kol_Wol, Wolff_Circ}.

\begin{theorem}\label{thm:M3}
Let $\{\A(\omega) \}_{\omega \in \Omega}$ be a collection satisfying the assumptions \ref{item:comparability} and \ref{item:nonconcentration}. Then 
\[ \|M^\delta_{\A} f \|_{L^2(\Omega)} \les (\log \delta^{-1})^{1/2} \|f \|_{L^2(\R^n)} \]
for $n\geq 3$. For $n=2$, 
\[ \|M^\delta_{\A} f \|_{L^3(\Omega)} \less \|f \|_{L^3(\R^n)}. \]
\end{theorem}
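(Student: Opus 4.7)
\medskip

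\noindent\textbf{Proof proposal for \Cref{thm:M3}.}
The plan is to mirror the duality-and-counting arguments used in \Cref{thm:NS3} and \Cref{thm:NS2}, now feeding abstractly from the hypotheses \ref{item:comparability} and \ref{item:nonconcentration} rather than from the concrete structure of $\NS_T$. Let $\{\omega_i\}_{i \in I}$ be a maximal $\delta$-separated subset of $\Omega$, so that $\#I \lesssim \delta^{-m}$. Using \ref{item:comparability}, every $\omega \in \Omega$ is within $\delta$ of some $\omega_i$, hence
\[
\|M^\delta_{\A} f\|_{L^q(\Omega)}^q \lesssim \delta^m \sum_{i \in I} M^{O(\delta)}_{\A} f(\omega_i)^q.
\]
For each $i$ I choose $S_i \in \A(\omega_i)$ so that $M^{O(\delta)}_{\A} f(\omega_i) \lesssim |S_i^{O(\delta)}|^{-1} \int_{S_i^{O(\delta)}} |f|$, and I observe that $|S_i^{O(\delta)}| \sim \delta$ since the radii of the $S_i$ lie in $[1,2]$. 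Dualizing the $\ell^q$ norm and applying H\"older then yields
\[
\|M^\delta_{\A} f\|_{L^q(\Omega)} \lesssim \delta^{m-1} \|f\|_{L^p} \sup \Big\| \sum_{i \in I} a_i \mathbf{1}_{S_i^{O(\delta)}} \Big\|_{L^{p'}},
\]
with $p = q$ and the supremum taken over non-negative $\{a_i\}$ with $\delta^m \sum_i a_i^{q'} = 1$. Crucially, \ref{item:nonconcentration} applied to the collection $\{S_i\}_{i \in I}$ gives the non-concentration bound $\#\{i : S_i \in B_\rho\} \lesssim \delta^{-m}\rho$ for every ball $B_\rho \subset \R^{n+1}$ of radius $\rho \geq \delta$, which is precisely the hypothesis of \Cref{prop:NS3dual} with $A \sim \delta^{-m}$ and of \Cref{prop:M2} with $A \sim \delta^{1-m}$ (using $A r / \delta = \delta^{-m} r$).

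For $n \geq 3$, I apply \Cref{prop:NS3dual} with $A \sim \delta^{-m}$ and the normalization $\delta^m \sum a_i^2 = 1$ (so $(\sum a_i^2)^{1/2} \sim \delta^{-m/2}$) to obtain
\[
\Big\|\sum_i a_i \mathbf{1}_{S_i^\delta}\Big\|_{L^2(\R^n)} \lesssim \delta \,(\log \delta^{-1})^{1/2} \delta^{-m/2} \cdot \delta^{-m/2} = \delta^{1-m} (\log \delta^{-1})^{1/2},
\]
and multiplying by $\delta^{m-1}$ gives the claimed $L^2(\Omega)$ bound. For $n = 2$, I invoke \Cref{prop:M2} with $A \sim \delta^{1-m}$ (which satisfies $A \leq \delta^{-O(1)}$ since $m$ is a fixed integer), and use the strong-type upgrade noted after that proposition (i.e., \Cref{prop:duality}) to deduce
\[
\Big\| \sum_i a_i \mathbf{1}_{S_i^\delta} \Big\|_{L^{3/2}(\R^2)} \lessapprox A^{1/3} \Big(\delta \sum_i a_i^{3/2}\Big)^{2/3} \sim \delta^{(1-m)/3} \cdot \delta^{2(1-m)/3} = \delta^{1-m},
\]
and multiplying by $\delta^{m-1}$ yields the $L^3(\Omega)$ bound up to $\delta^{-\epsilon}$.

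The main technical point, beyond bookkeeping, is to verify that the exponent of $\rho$ in \ref{item:nonconcentration} pairs correctly with the $\delta^m$ coming from the discretization of $\Omega$ so that the parameter $m$ disappears entirely from the final estimate, leaving only the logarithmic (respectively $\delta^{-\epsilon}$) loss. This is exactly what happens because the linear dependence on $\rho$ in \ref{item:nonconcentration} matches the linear dependence on $A$ in the ``one-dimensional'' circle-counting bound of \Cref{prop:M2} and in the quadratic form estimate inside \Cref{prop:NS3dual}. Everything else — the duality step, the passage from $M^\delta$ to $M^{O(\delta)}$, and the extraction of $S_i \in \A(\omega_i)$ — is formal once \ref{item:comparability} and \ref{item:nonconcentration} are in hand.
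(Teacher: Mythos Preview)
Your proposal is correct and follows essentially the same route the paper intends: the paper omits the proof of \Cref{thm:M3}, citing its similarity to the proofs of \Cref{thm:NS3} and \Cref{thm:NS2}, and you have correctly carried out that similarity by replacing the concrete duality \Cref{lem:duality-unit-radius} and counting \Cref{lem:count} with the abstract hypotheses \ref{item:comparability} and \ref{item:nonconcentration}, then feeding the resulting non-concentration into \Cref{prop:NS3dual} (for $n\geq 3$) and \Cref{prop:M2} with its strong-type upgrade (for $n=2$). The bookkeeping with the parameter $m$ is handled correctly, and the cancellation of the $\delta^{m-1}$ prefactor against the $A$-dependence is exactly the point.
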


We omit the proof since it is similar to the proof of Theorems \ref{thm:NS3} and \ref{thm:NS2}.

\section{The maximal functions $\MT$ and $\ST$}\label{sec:ST}
In this section, we prove \Cref{thm:MT}, \Cref{thm:ST} and \Cref{thm:avg_frac}. 

\subsection{Reductions using Littlewood-Paley theory} \label{sec:Littlewood-Paley}
We first start by discussing a standard reduction using Littlewood-Paley theory. 
Let $\psi$ be a smooth function such that $\ft{\psi}$ is a radial function  supported on $\{
\xi: 1/2\leq |\xi|\leq 2 \}$, $0\leq \ft{\psi} \leq 1$, and $\sum_{j\in \Z} \ft{\psi_j}(\xi) = 1$ for any $\xi\neq 0$, where $\ft{\psi_j}(\xi) = \ft{\psi}(2^{-j} \xi)$. We also let $\varphi_{k} = \sum_{j\leq k} \psi_j$, whose Fourier transform is a smooth bump function supported on $\{\xi: |\xi|\leq 2^{k+1} \}$. In addition, we fix a function $\tilde{\psi}$ whose Fourier transform is supported on $\{ \xi :1/4\leq |\xi| \leq 4 \}$ such that $\psi = \psi * \tilde{\psi}$. We set $\ft{\tilde{\psi}_j}(\xi) = \ft{\tilde{\psi}}(2^{-j} \xi)$ so that $\psi_j = \psi_j * \tilde{\psi}_j$.

Let $\sigma_t$ be the measure on $t\S^{n-1}$ defined by $\ft{\sigma_t}(\xi) = \ft{\sigma}(t\xi)$. For $t\sim 1$, an elementary computation shows that $\psi_j * \sigma_t$ is $O(2^j)$ and decays rapidly away from the $O(2^{-j})$ neighborhood of $t\S^{n-1}$. To be specific, we have for all $j\geq 1$ and $N>0$,
\begin{equation}\label{eqn:pointwise}
\begin{split}
   |\psi_j * \sigma_t (x)| &\les_N 2^j(1+2^j| |x|- t|)^{-N} \les_N 2^j (1+|x|)^{-N}.    \\
      |\varphi_0 * \sigma_t (x)| &\les_N (1+|x|)^{-N}.
\end{split}
\end{equation}

In the following, we write 
\[\avg f (x,t) := \int_{\S^{n-1}} f(x+ty)d\sigma(y) = f*\sigma_t (x).\]
\begin{proposition} \label{prop:reduce} 
Let $j\geq 1$. If
\begin{equation}\label{eqn:STreduction}
    \| \sup_{u\in T} \sup_{1\leq t\leq 2} |\avg(f*\psi_j)(\cdot + tu,t)| \|_{L^p(\R^n)} \les A  \| f \|_{L^p(\R^n)}
\end{equation}
holds for some $2\leq p< \infty$ and $A>0$, then 
\begin{equation}\label{eqn:STreduction2}
    \| \sup_{l\in \Z} \sup_{u\in T} \sup_{1\leq t\leq 2} |\avg(f*\psi_{j-l})(\cdot + 2^ltu,2^lt)| \|_{L^p(\R^n)} \les A \| f \|_{L^p(\R^n)}.
\end{equation}
Let $1<p<\infty$. If there exists $\epsilon>0$ such that 
\begin{equation}\label{eqn:STreduction2_decay}
    \| \sup_{l\in \Z} \sup_{u\in T} \sup_{1\leq t\leq 2} |\avg(f*\psi_{j-l})(\cdot + 2^ltu,2^lt)| \|_{L^p(\R^n)} \les 2^{-j \epsilon} \| f \|_{L^p(\R^n)}
\end{equation}
for all $j\geq 1$, then $\ST$ is bounded on $L^p(\R^n)$. 
\end{proposition}
\begin{proof}
The proof is standard, so we sketch the proof. Using a rescaled version of \eqref{eqn:STreduction}, we get
\begin{align*}
    &\| \sup_{l\in \Z} \sup_{u\in T} \sup_{1\leq t\leq 2} |\avg(f*\psi_{j-l})(\cdot + 2^ltu,2^lt)| \|_{L^p(\R^n)}^p  \\
    \leq& \sum_{l\in \Z} \| \sup_{u\in T} \sup_{1\leq t\leq 2} |\avg(f*\psi_{j-l})(\cdot + 2^ltu,2^lt)| \|_{L^p(\R^n)}^p \\
    \les& A^p \sum_{l\in \Z}  \| f*\psi_{j-l} \|_{L^p(\R^n)}^p \les A^p \| f \|_{L^p(\R^n)}^p.
\end{align*}
In the last step, we used the embedding $l^2 \hookrightarrow l^p$ and the Littlewood-Paley theory.

Next, we make the decomposition
\begin{equation}\label{eqn:LP}
\begin{split}
    \ST f(x) =& \sup_{l\in \Z} \sup_{u\in T} \sup_{1\leq t\leq 2} |\avg f(x + 2^ltu,2^lt)| \\
    \leq & \sum_{j\geq 1} \sup_{l\in \Z} \sup_{u\in T} \sup_{1\leq t\leq 2} |\avg(f*\psi_{j-l})(x + 2^ltu,2^lt)| \\
    &+  \sup_{l\in \Z} \sup_{u\in T} \sup_{1\leq t\leq 2} |\avg(f*\varphi_{-l})(x + 2^ltu,2^lt)|.
\end{split}
\end{equation} 

Assuming \eqref{eqn:STreduction2_decay}, we may bound the first term in \eqref{eqn:LP} by using the triangle inequality and the exponential decay in $j$. The second term in \eqref{eqn:LP} is bounded pointwise by the Hardy-Littlewood maximal function of $f$;
\begin{equation}\label{eqn:HardyLittlewood}
   \sup_{l\in \Z} \sup_{u\in T} \sup_{1\leq t\leq 2} |f* \varphi_{-l} * \sigma_{2^lt}(x+2^ltu)| \les M_{HL}f(x). 
\end{equation}
Indeed, a rescaling of \eqref{eqn:pointwise} implies the pointwise estimate 
\[ |\varphi_{-l} * \sigma_{2^lt} (x+2^ltu)| \les 2^{-ln}(1+|2^{-l}x+tu|)^{-N} \les 2^{-ln}(1+|2^{-l}x|)^{-N} \]
for $t\sim 1$ and $u\in T$ as $|tu| \les 1$. Therefore, $\ST$ is bounded on $L^p$.
\end{proof}

Here we record a weak-type estimate to be used later.
\begin{lemma} \label{lem:L1} For every $j\geq 1$,
\[ \| \sup_{l\in \Z} \sup_{u\in T} \sup_{1\leq t\leq 2} |\avg(f*\psi_{j-l})(\cdot + 2^ltu,2^lt)| \|_{L^{1,\infty}(\R^n)} \les 2^{j} \| f \|_{L^1(\R^n)}.\]
\end{lemma}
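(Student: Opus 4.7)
The plan is to prove a pointwise bound of $|(f*\psi_{j-l}*\sigma_{2^lt})(x+2^ltu)|$ by $2^j$ times the Hardy-Littlewood maximal function $M_{HL}f$, uniformly in $(l,u,t)$, and then invoke the weak-type $(1,1)$ inequality for $M_{HL}$. This approach mirrors the treatment of the second term in \eqref{eqn:LP} via \eqref{eqn:HardyLittlewood}, except here the kernel is concentrated on a shell rather than a ball, so the rapid decay in \eqref{eqn:pointwise} will be crucial.

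First, I would use \eqref{eqn:pointwise} together with the elementary scaling identity $2^{ln}\psi_{j-l}(2^lw)=\psi_j(w)$ and the relation $\sigma_{2^lt}(2^l\cdot)=\sigma_t(\cdot)$ (as measures, with the appropriate Jacobian $2^{-ln}$) to derive the rescaled kernel bound
\[
 |h_{l,t}(y)| \les_N 2^{j-ln}\bigl(1+2^{j-l}\bigl||y|-2^l t\bigr|\bigr)^{-N},\qquad h_{l,t}:=\psi_{j-l}*\sigma_{2^lt}.
\]

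Next, applying this bound and the change of variables $y=x-2^l\tilde v$ in the convolution integral, I would obtain
\[
 |(f*h_{l,t})(x+2^ltu)| \les_N 2^j \int |f(x-2^l\tilde v)|\bigl(1+2^j\bigl||\tilde v+tu|-t\bigr|\bigr)^{-N}\,d\tilde v.
\]
The density in $\tilde v$ is bounded by $1$ and (up to rapid decay) is concentrated on the sphere $\{|\tilde v+tu|=t\}$, which is contained in $B(0,C)\subset\R^n$ for some $C$ depending only on $\operatorname{diam}(T)$, since $T$ is compact and $t\in[1,2]$. Splitting the integration region into $|\tilde v|\leq 2C$ and $|\tilde v|>2C$ and undoing the scaling via $w=x-2^l\tilde v$, the near piece is at most a constant multiple of the average of $|f|$ over $B(x,2C\cdot 2^l)$, which is bounded by $M_{HL}f(x)$. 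The far piece is handled by the Schwartz tail: for $|\tilde v|>2C$ one has $||\tilde v+tu|-t|\gtrsim |\tilde v|$, so a dyadic decomposition over $|\tilde v|\sim 2^k$ produces
\[
 2^j \sum_{k\geq 0}(2^{j+k})^{-N}\cdot 2^{kn} M_{HL}f(x) \les 2^{j(1-N)}M_{HL}f(x),
\]
which is trivially absorbed into $2^j M_{HL}f(x)$ for $N>n$.

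Combining these estimates yields the pointwise inequality
\[
 \sup_{l\in\Z}\sup_{u\in T}\sup_{1\leq t\leq 2}|(f*h_{l,t})(x+2^ltu)| \les 2^j\, M_{HL}f(x)
\]
uniformly in all parameters, and \Cref{lem:L1} follows immediately from the weak-type $(1,1)$ bound for $M_{HL}$. There is no serious obstacle here; the only care required is in the bookkeeping of the rescaling parameters $(j,l,t)$ and confirming that the tail contribution in the $\tilde v$-integral is summable, both of which are routine.
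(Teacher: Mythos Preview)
Your proposal is correct and follows the same strategy as the paper: establish the pointwise bound $\sup_{l,u,t}|\avg(f*\psi_{j-l})(x+2^ltu,2^lt)|\les 2^j M_{HL}f(x)$ and then invoke the weak-type $(1,1)$ inequality for $M_{HL}$. The only difference is that you work with the first (shell-localized) inequality in \eqref{eqn:pointwise} and then perform a near/far decomposition in $\tilde v$, whereas the paper (via the reference to \eqref{eqn:HardyLittlewood}) uses the cruder second inequality $|\psi_j*\sigma_t(x)|\les_N 2^j(1+|x|)^{-N}$, which after rescaling gives a radially decreasing kernel of $L^1$-norm $\sim 2^j$ and hence the pointwise bound in one stroke; this shortcut avoids your near/far split but yields the identical conclusion.
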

\begin{proof}
It follows from the pointwise estimate, cf. \eqref{eqn:HardyLittlewood},
\[  \sup_{l\in \Z} \sup_{u\in T} \sup_{1\leq t\leq 2} |f* \psi_{j-l}* \sigma_{2^lt}(x+2^ltu)| \les 2^j M_{HL}f(x) \]
and the fact that $M_{HL}$ is of weak type $(1,1)$.
\end{proof}

\subsection{Proof of \Cref{thm:MT}}\label{section:NT}
We prove \Cref{thm:MT} in this subsection. The approach here also provides a Fourier analytic proof of the $L^2$-bound \eqref{eq:NM-L2}; see \Cref{thm:NTfulldim} below. We note that this subsection is independent of the remainder of this section and will not be used for the proof of \Cref{thm:ST}.

We recall the decomposition of the measure $\sigma  = \sigma* \varphi_0 + \sum_{j\geq 1} \sigma * \psi_j$. We note that $\varphi_0$ and $\psi_1$ satisfy similar bounds, so Theorem \ref{thm:MT} is obtained by summing over the following frequency localized estimates \Cref{prop:AVG} and \Cref{prop:AVG-using-NT}.

\begin{proposition} \label{prop:AVG} Let $n\geq 2$ and $0 \leq s \leq n$.\footnote{Unlike many other bounds in our paper, this bound is valid for $0\leq s \leq n$.} Suppose that $T\subset \R^n$ is a compact set with finite $s$-dimensional upper Minkowski content (see \eqref{eqn:coverassume}). Then for $1\leq p\leq 2$, 
\[ \| \sup_{u\in T} |f*\psi_j*\sigma(\cdot+u)| \|_{p}  \les  \min( 2^{-j( n-1-\frac{n-1+s}{p} )}, 2^{-j( n-s-\frac{n-s+1}{p} )}) \| f \|_p. \]
\end{proposition}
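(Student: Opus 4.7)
My plan is to reduce \Cref{prop:AVG} to three endpoint estimates for the convolution by $K_j := \psi_j * \sigma$ and then interpolate. The two core facts about $K_j$ itself are $\|K_j\|_1 \lesssim 1$, an immediate consequence of \eqref{eqn:pointwise}, and $\|f*K_j\|_2 \lesssim 2^{-j(n-1)/2}\|f\|_2$, which follows from Plancherel together with the classical decay $|\ft\sigma(\xi)| \lesssim (1+|\xi|)^{-(n-1)/2}$.

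To handle the supremum over $T$, I would discretize. Using \eqref{eqn:coverassume} at scale $2^{-j}$, choose a $2^{-j}$-net $T_j \subset T$ with $\#T_j \lesssim 2^{js}$, and establish two pointwise majorizations valid for every $u\in T$ and every $u'\in T_j$ with $|u-u'|\leq 2^{-j}$. First, with the non-negative envelope
\[
K_j^*(y) := 2^j\bigl(1+2^j\bigl||y|-1\bigr|\bigr)^{-N}(1+|y|)^{-N},
\]
which satisfies $|K_j|\lesssim K_j^*$, $\|K_j^*\|_1 \lesssim 1$, and is invariant under $2^{-j}$-translations up to constants, we obtain $|f*K_j(x+u)|\lesssim |f|*K_j^*(x+u')$. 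Second, since $\wh{K_j}$ is supported in $\{|\xi|\sim 2^j\}$ we may write $f*K_j = (f*K_j)*\tilde\psi_j$; the Peetre-type bound $|\tilde\psi_j(z+v)|\lesssim 2^{jn}(1+2^j|z|)^{-N}$, uniform in $|v|\leq 2^{-j}$, then yields $|f*K_j(x+u)|\lesssim M_{HL}(f*K_j)(x+u')$.

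The ``sup-$\leq$-sum'' trick, combined with these majorizations, gives the strong $L^1$ bound
\[
\Bigl\|\sup_{u\in T}|f*K_j(\cdot+u)|\Bigr\|_1 \leq \sum_{u'\in T_j}\||f|*K_j^*\|_1 \lesssim \#T_j \cdot \|K_j^*\|_1 \|f\|_1 \lesssim 2^{js}\|f\|_1
\]
via (i) and Young's inequality, and the strong $L^2$ bound
\[
\Bigl\|\sup_{u\in T}|f*K_j(\cdot+u)|\Bigr\|_2^2 \leq \sum_{u'\in T_j}\|M_{HL}(f*K_j)\|_2^2 \lesssim \#T_j \cdot 2^{-j(n-1)}\|f\|_2^2
\]
via (ii) and the $L^2$-boundedness of $M_{HL}$. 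A direct pointwise domination in the spirit of \Cref{lem:L1} (using $|K_j(z)|\lesssim 2^j(1+|z|)^{-N}$ and compactness of $T$) produces the weak-$(1,1)$ bound $\lesssim 2^j$. Marcinkiewicz interpolation between the strong $L^1$ bound $2^{js}$ and the strong $L^2$ bound $2^{-j(n-1-s)/2}$ yields the first exponent $-\bigl(n-1-(n-1+s)/p\bigr)$; interpolation between the weak-$(1,1)$ bound $2^j$ and the strong $L^2$ bound yields the second exponent $-\bigl(n-s-(n-s+1)/p\bigr)$.

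The main subtlety is that the $L^1$ and $L^2$ endpoints require different discretizations: the non-negative envelope $K_j^*$ is essential at $L^1$ (where oscillation is useless) but has $L^2 \to L^2$ norm only $\lesssim 1$, so it cannot transmit the curvature decay of $\wh\sigma$; conversely, the Peetre--Hardy--Littlewood reduction in (ii) keeps $f*K_j$ itself on the right-hand side, thereby preserving the $2^{-j(n-1)/2}$ factor that drives the $L^2$ gain.
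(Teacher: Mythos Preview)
Your proof is correct and follows essentially the same route as the paper: discretize $T$ at scale $2^{-j}$, use the frequency localization (via $\tilde\psi_j$) to pass the supremum to the net $T_j$, and then interpolate between the $L^1$ and $L^2$ endpoints, with the alternative $L^1$ bound $2^j$ coming from the crude pointwise estimate $|K_j(z)|\lesssim 2^j(1+|z|)^{-N}$. The only cosmetic difference is the order of operations: the paper first interpolates the convolution bound $\|f*\psi_j*\sigma\|_p \lesssim 2^{-j(n-1)(1-1/p)}\|f\|_p$ and then applies the discretization lemma directly in $L^p$ (picking up $(\#T_j)^{1/p}$), whereas you discretize at the endpoints and interpolate the maximal bounds afterward. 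One small point worth tightening: your pointwise domination $\sup_{u\in T}|f*K_j(\cdot+u)|\lesssim 2^j\,|f|*(1+|\cdot|)^{-N}$ actually yields a \emph{strong} $L^1$ bound (since $(1+|\cdot|)^{-N}\in L^1$), not merely weak-$(1,1)$; this matters at the endpoint $p=1$, where Marcinkiewicz alone does not conclude.
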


\begin{proposition} \label{prop:AVG-using-NT} 

Let $n\geq 3$ and $0 \leq s \leq n-1$. Suppose that $T\subset \R^n$ is a compact set with finite $s$-dimensional upper Minkowski content. Then
\begin{align*}\| \sup_{u\in T} |f*\psi_j*\sigma (\cdot + u)| \|_{p} &\lessapprox 
	2^{-j(\frac{9-4s}{2}-\frac{7-3s}{p} )} \|f\|_{p}
	&(n = 3, \tfrac{3}{2} \leq p \leq 2) \\
	\| \sup_{u\in T} |f*\psi_j*\sigma (\cdot + u)| \|_{p} &\lessapprox  2^{-j(\frac{3n-2-3s}{2}-\frac{2n-1-2s}{p} )} \|f\|_{p}
&(n \geq 4, \tfrac{4}{3} \leq p \leq 2)
\end{align*}
where $A \lessapprox B$ means $A \lesssim_\epsilon 2^{j\epsilon}B$ for any $\epsilon > 0$.
\end{proposition}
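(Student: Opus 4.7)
The strategy is to establish two endpoint estimates, one at $L^2$ and one at $L^{p_0}$ with $p_0 = 3/2$ for $n = 3$ and $p_0 = 4/3$ for $n \geq 4$, then interpolate. A direct computation confirms that linear interpolation in $1/p$ between the endpoint exponents $E_{p_0} = -1/6$ and $E_2 = (2-s)/2$ (for $n=3$), or $E_{p_0} = -1/4$ and $E_2 = (n-1-s)/2$ (for $n\geq 4$), reproduces exactly the exponents $\frac{9-4s}{2} - \frac{7-3s}{p}$ and $\frac{3n-2-3s}{2} - \frac{2n-1-2s}{p}$ claimed in the proposition. Set $\delta = 2^{-j}$ throughout.

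For the $L^{p_0}$ endpoint I would reduce to the Nikodym maximal operator $\NT$ by decomposing the kernel $\psi_j * \sigma$ dyadically in the distance to $\S^{n-1}$. Each dyadic piece is, up to a factor $2^{-\ell N}$ coming from the Schwartz decay of $\psi$, comparable to the normalized indicator of the $2^\ell \delta$-neighborhood of $\S^{n-1}$; summing over $\ell \geq 0$ with $N$ chosen sufficiently large reduces the task to bounding $\NT$ at scale $\delta$. Since the hypothesis $s \leq n-1$ implies $N(T,\rho) \lesssim \rho^{-(n-1)}$ as well, \Cref{remark:NT-same-bounds-as-NM} combined with \Cref{theorem:NM-upper-bounds}\ref{theorem:NM-upper-bounds-item:upper-bounds} supplies $\|\NT\|_{L^{p_0}\to L^{p_0}} \lessapprox \delta^{-E_{p_0}}$, the desired endpoint bound. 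Notably, this bound does not improve as $s$ decreases below $n-1$, consistent with $E_{p_0}$ being $s$-independent.

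For the $L^2$ endpoint the $s$-dependence must be exploited. Since $F := f * \psi_j * \sigma$ has Fourier support on $|\xi| \sim 2^j$, a Plancherel--Polya reverse inequality applied in the $u$ variable yields
\[
\sup_{u \in B(u_0,\delta)} |F(x+u)|^2 \lesssim \delta^{-n} \int_{B(u_0, C\delta)} |F(x+u)|^2 \, du
\]
for each $u_0 \in T$. Covering $T$ by $\lesssim \delta^{-s}$ balls of radius $\delta$, summing, integrating in $x$ with Fubini, and using $|T^{O(\delta)}| \lesssim \delta^{n-s}$ together with Plancherel and the standard decay $|\widehat\sigma(\xi)| \lesssim |\xi|^{-(n-1)/2}$ (which gives $\|F\|_2 \lesssim 2^{-j(n-1)/2}\|f\|_2$), I obtain
\[
\|\sup_{u\in T}|F(\cdot + u)|\|_{L^2(\R^n)} \lesssim 2^{-j(n-1-s)/2}\|f\|_{L^2(\R^n)},
\]
which is the claimed $L^2$ bound.

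To combine the two endpoints I would linearize the supremum by a measurable selection $u^\star: \R^n \to T$ that pointwise near-realizes the supremum, then apply Riesz--Thorin to the linear operator $\mathcal{T} f(x) := F(x + u^\star(x))$; the endpoint bounds for $\mathcal{T}$ are immediate from the sup-bounds and are uniform in $u^\star$, so the interpolated bound transfers back to the supremum. The main obstacle will be the bookkeeping at the $L^{p_0}$ endpoint: the dyadic decomposition of the Schwartz tail of $\psi_j*\sigma$ produces contributions from $\NT$ at larger scales $2^\ell \delta$, and one must choose $N$ large enough to dominate the polynomial losses from the $\NT$ bounds at those coarser scales. This is routine but requires care.
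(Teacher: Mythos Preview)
Your proposal is correct and follows essentially the same route as the paper. The paper packages your $L^{p_0}$ reduction to $\NT$ as \Cref{lem:compare} (part 2) and your $L^2$ endpoint as the $p=2$ case of \eqref{eqn:smallk} (proved via \Cref{lem:discrete}, which is your Plancherel--P\'olya step), then interpolates; the only cosmetic difference is that the paper does not linearize the supremum explicitly but simply interpolates the sublinear operator.
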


For the proof of \Cref{prop:AVG}, we discretize $T$ by using the frequency localization.
\begin{lemma} \label{lem:discrete} Let $T_j$ be the collection of centers of balls of radius $2^{-j}$ covering $T$. Then for $p\geq 1$
\begin{equation}\label{eqn:discretization}
\| \sup_{u\in T}  |f*\psi_j*\sigma (\cdot+u)|\|_p^p \les  \# T_j \|   f*\psi_j*\sigma \|_p^p.
\end{equation}
\end{lemma}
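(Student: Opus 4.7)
The main idea is that $F := f*\psi_j*\sigma$ has Fourier transform supported in $\{|\xi| \sim 2^j\}$, hence is essentially constant on spatial scales of order $2^{-j}$. Thus one expects the supremum of $|F(\cdot+u)|$ over $u \in T$ to be comparable, up to a bounded pointwise maximal function, to the supremum over a $2^{-j}$-net in $T$. Combined with the trivial $\ell^\infty \le \ell^p$ bound on a discrete set, this yields the factor $\#T_j$.

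More precisely, my first step would be to note that by the covering property, for every $u \in T$ there exists $u' \in T_j$ with $|u - u'| \le 2^{-j}$, so
\[
\sup_{u\in T}|F(x+u)| \le \sup_{u' \in T_j}\,G(x+u'), \qquad G(y) := \sup_{|h|\le 2^{-j}} |F(y+h)|.
\]
The second step is the key one: I want to prove the pointwise bound $G(y) \lesssim |F| * \Phi_j(y)$ for an $L^1$-normalized bump $\Phi_j(w) = 2^{jn}(1+2^j|w|)^{-N}$. To do this, I use $\psi_j = \psi_j * \tilde{\psi}_j$ (as introduced just before the lemma), which gives $F = F * \tilde{\psi}_j$ and hence, for any $y$ and $|h|\le 2^{-j}$,
\[
|F(y+h)| \le \int |F(z)|\,|\tilde{\psi}_j(y+h-z)|\,dz.
\]
Since $\tilde{\psi}_j$ is Schwartz at scale $2^{-j}$, taking the supremum over $|h|\le 2^{-j}$ inside the integral dominates $|\tilde{\psi}_j(y+h-z)|$ pointwise by a constant multiple of $\Phi_j(y-z)$, yielding $G \lesssim |F|*\Phi_j$ and in particular $\|G\|_p \lesssim \|F\|_p$ for every $1 \le p \le \infty$ by Young's inequality.

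The third step is the discrete estimate: since translation is an $L^p$-isometry,
\[
\Bigl\|\sup_{u'\in T_j} G(\cdot+u')\Bigr\|_p^p \le \sum_{u'\in T_j} \|G(\cdot+u')\|_p^p = (\#T_j)\,\|G\|_p^p \lesssim (\#T_j)\,\|F\|_p^p,
\]
which gives the claimed inequality. The only subtle point is step two; a naive appeal to the Hardy--Littlewood maximal function would fail at $p=1$, so it is important to argue directly via convolution with $\tilde{\psi}_j$ so that one gets an unweighted $L^p$-bound valid for all $p \ge 1$. Everything else is bookkeeping.
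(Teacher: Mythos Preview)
Your proposal is correct and follows essentially the same approach as the paper: both use the reproducing identity $\psi_j=\psi_j*\tilde\psi_j$ to dominate $|F(\cdot+u)|$ for $u$ in a $2^{-j}$-ball by $|F|*\Phi_j$ at the center, then discretize to the net $T_j$ and replace $\sup$ by $\sum$. The only cosmetic difference is ordering: the paper applies H\"older to pass to $|F|^p*\Psi_j$ before integrating, while you bound $G\lesssim|F|*\Phi_j$ and invoke Young's inequality; the content is the same.
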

\begin{proof}
Let $\Psi_{j}(x) = 2^{jn}(1+2^j|x|)^{-(n+1)}$ so that $\| \Psi_{j}\|_{L^1} \sim 1$ and $|\tilde{\psi}_j (x)| \les \Psi_{j}(x)$. Note that if $|x-x_0| \les 2^{-j}$, then $\Psi_{j}(x) \les \Psi_{j}(x_0)$. 
Therefore, by using $\psi_j = \psi_j * \tilde{\psi}_j$, we obtain
\begin{equation}\label{eqn:roughlyconst}
\begin{split}
 |f*\psi_j*\sigma (x+u)| &\les |f*\psi_j*\sigma|* \Psi_{j}(x+u_0)\\
 &\les [|f*\psi_j*\sigma|^p* \Psi_{j}(x+u_0)]^{1/p},
\end{split} 
\end{equation} 
whenever $|u-u_0| \les 2^{-j}$. In the last inequality, we have used H\"{o}lder's inequality.

Using \eqref{eqn:roughlyconst}, we get
\begin{align*}
\sup_{u\in T}  |f*\psi_j*\sigma (x+u)|^p \les \sup_{u\in T_j} |f*\psi_j*\sigma|^p*\Psi_{j} (x+u) 
\end{align*}
Therefore, we have
\begin{align*}
\sup_{u\in T}   |f*\psi_j*\sigma(x+u)|^p 
&\les \sum_{u\in T_j}  |f*\psi_j*\sigma|^p*\Psi_{j}(x+u)\\
&\les \sum_{u\in T_j} \int |f*\psi_j*\sigma|^p(x-y+u) \Psi_{j}(y) dy.
\end{align*} 
Integrating the expression over $x$ completes the proof.
\end{proof}

\begin{proof}[Proof of Proposition \ref{prop:AVG}] 
We first recall a standard estimate 
\begin{equation}\label{eq:fixedu}
    \| f*\psi_j*\sigma \|_{L^p} \les 2^{-j(n-1)(1-\frac{1}{p})}\| f\|_{L^p}. 
\end{equation}
for $1\leq p\leq 2$. To see that, one interpolates 
\[ \| f*\psi_j*\sigma \|_{L^1} \les \| f\|_{L^1} \;\; \text{and} \;\; \| f*\psi_j*\sigma \|_{L^2} \les 2^{-j\frac{n-1}{2}}\| f\|_{L^2}, \]
which follow from pointwise estimates (cf. \eqref{eqn:pointwise} and \eqref{eq:stationaryphase})
\begin{equation}\label{eqn:pointwiseN}
|\psi_j*\sigma(x)| \les 2^j(1+2^j| |x|-1|)^{-N} \;\; \text{and} \;\; |\wh{\psi_j*\sigma}(\xi)| \les 2^{-j\frac{n-1}{2}}.
\end{equation}

Next, we cover $T$ with a minimal number of balls of radius $2^{-j}$. Let $T_j$ be the collection of centers of these balls so that $\# T_j = N(T,2^{-j})\les 2^{sj}$. Then by Lemma \ref{lem:discrete} and \eqref{eq:fixedu}, 
\begin{equation}\label{eqn:smallk}
 \| \sup_{u\in T} |f*\psi_j*\sigma(\cdot+u)| \|_p 
\les (\# T_j)^{1/p} \| f*\psi_j*\sigma \|_p  \les 2^{sj/p}
2^{-j(n-1)(1-\frac{1}{p})} \| f\|_p.
\end{equation}
Note that \eqref{eqn:smallk} is one of the claimed bounds. 

We give the other claimed bound by using a different $L^1$-estimate, which is better for $s>1$. The pointwise bound \eqref{eqn:pointwiseN} shows that 
\begin{equation}\label{eqn:pointwise2}
|\psi_j*\sigma(x) |\les 2^j(1+|x|)^{-N}.
\end{equation}
We use this bound to obtain 
\[ |f*\psi_j*\sigma(x+u)| \les 2^j [|f|*(1+|\cdot|)^{-N}] (x+u) \les 2^j [|f|*(1+|\cdot|)^{-N}] (x) \] 
which holds for any $u\in T$ because $T$ is compact. The pointwise estimate implies 
\[ \| \sup_{u\in T} |f*\psi_j*\sigma(\cdot+u)|  \|_1
\les 2^j \| f \|_1. \]
Interpolating this with the $L^2$ bound from \eqref{eqn:smallk} gives the other claimed bound.
\end{proof}

To prove \Cref{prop:AVG-using-NT}, we need, for $\delta \in (0,1/2)$, the $\delta$-regular version of $\MT$, defined by 
\[ \NT f(x) = \sup_{u\in T}  \left|\frac{1}{|S^{\delta}(0)|} \int_{S^\delta(0)} f(x+u+y)dy \right|. \]
Note that $\NT = \NM$ for $T=\S^{n-1}$. We also need the following, which relates $\NT$ and $\MT$.

\begin{lemma}[Relations between $\NT$ and $\MT$] \label{lem:compare} 
Let $0<\delta<1$. Given a function $\phi$, we let $\phi_\delta(x) = \delta^{-n} \phi(x/\delta)$. 

\begin{enumerate}
\item We have
\[ \NT f(x) \les \MT (|f|*\phi_\delta) (x)  \]
for any integrable $\phi \geq 1_{B_2(0)}$ and any integrable $f$. Consequently, 
\[\| \NT \|_{L^p\to L^q} \les \| \MT \|_{L^p\to L^q}.\] 

\item Conversely, if $\| \NT \|_{L^p\to L^q} \leq A(\delta)$ for a non-increasing function $A\gtrsim 1$, then 
\[ \| \MT(f*\phi_\delta) \|_{L^q} \les_\phi A(\delta)\| f \|_{L^p}\]
for any Schwartz function $\phi$ and any $f \in L^p$.
\end{enumerate}
\end{lemma}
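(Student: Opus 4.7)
\textbf{Proof plan for \Cref{lem:compare}.} For part (1), the key is a geometric observation: every $z \in S^\delta(x+u)$ lies in $B_{2\delta}(x+u+y)$ for a spherical cap's worth of directions $y \in \S^{n-1}$ of $\sigma$-measure $\ges \delta^{n-1}$. To see this, write $z-(x+u) = (1+s)e$ with $|s| \le \delta$ and $e \in \S^{n-1}$, and parameterize $y$ by its angle $\theta$ to $e$; then $|y-(1+s)e|^2 \approx s^2 + \theta^2$, so the cap $\{y : |y-(1+s)e| \le 2\delta\}$ has angular radius $\ges \sqrt{4\delta^2 - s^2} \ges \delta$. Since $\phi \ge 1_{B_2(0)}$ gives $\phi_\delta \ge \delta^{-n}1_{B_{2\delta}(0)}$, Fubini then yields
\[
\int_{\S^{n-1}} (|f|*\phi_\delta)(x+u+y)\,d\sigma(y) \ges \delta^{-1} \int_{S^\delta(x+u)} |f| \ges \frac{1}{|S^\delta|}\int_{S^\delta(x+u)} |f|,
\]
and taking the supremum over $u \in T$ yields $\NT f(x) \les \MT(|f|*\phi_\delta)(x)$ pointwise. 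The operator-norm bound follows from Young's inequality $\||f|*\phi_\delta\|_p \le \|\phi\|_1 \|f\|_p$.

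For part (2), write $\MT(f*\phi_\delta)(x) = \sup_{u \in T}|(f*(\phi_\delta*\sigma))(x+u)|$ where $\sigma$ is surface measure on $\S^{n-1}$. A direct polar-coordinate calculation using Schwartz decay of $\phi$ (splitting according to whether $|z| \le 2$ or $|z| \ge 2$) produces, for any $N$, the envelope
\[
|\phi_\delta*\sigma(z)| \les_N \delta^{-1}\bigl(1+\dist(z,\S^{n-1})/\delta\bigr)^{-N}\mathbf{1}_{|z|\le 2} + \delta^{N-n}(1+|z|)^{-N}\mathbf{1}_{|z|\ge 2}.
\]
Dyadically decomposing the near-sphere region into shells $\dist(z,\S^{n-1}) \sim 2^k\delta$ for $0 \le k \lesssim \log_2(1/\delta)$, one has
\[
|\phi_\delta*\sigma(z)| \les \sum_{k \ge 0} 2^{-(N-1)k}\,\frac{1}{|S^{2^k\delta}|}\,1_{S^{2^k\delta}(0)}(z) + \delta^{N-n}(1+|z|)^{-N}.
\]

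Convolving with $|f|$ and taking $\sup_{u\in T}$ of the $u$-shift, the $k$th term in the sum is pointwise dominated by $\mathcal{N}_T^{2^k\delta}|f|(x)$, while the tail is controlled by $\delta^{N-n}M_{HL}|f|(x)$. Taking $L^q$ norms and using $\|\mathcal{N}_T^{2^k\delta}\|_{L^p\to L^q} \le A(2^k\delta) \le A(\delta)$ (the second inequality from monotonicity of $A$), the geometric series sums to $\les A(\delta)\|f\|_p$. For $p=q>1$ (the case of interest) the Hardy--Littlewood tail contributes only $O(\delta^{N-n}\|f\|_p)$, which is absorbed into $A(\delta)\|f\|_p$ via $A \ges 1$ by choosing $N$ sufficiently large.

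\textbf{Main obstacle.} The substantive content is in two elementary but careful calculations: the spherical-cap measure bound in part (1), and the Schwartz envelope for $\phi_\delta*\sigma$ in part (2). Once these are established, the remainder is organizational bookkeeping --- Young's inequality, dyadic summation using monotonicity of $A$, and absorbing the Hardy--Littlewood tail using $A \ges 1$.
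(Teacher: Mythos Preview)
Your proposal is correct and follows essentially the same approach as the paper. For part (1), the paper packages your spherical-cap Fubini argument as the pointwise convolution inequality $\phi_\delta * \sigma \ges \delta^{-1} 1_{S^\delta(0)}$ (their equation \eqref{eqn:conv}), which is exactly the content of your cap-measure computation; for part (2), the paper performs the same Schwartz envelope and dyadic shell decomposition of $|\phi_\delta|*\sigma$, summing the geometric series via monotonicity of $A$ and absorbing the far-field tail as you do.
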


Lemma \ref{lem:compare} seems standard; e.g., see \cite[Lemma 5.1]{Schlag_Bo} for a version of the second statement for the circular maximal function. We include a proof for the sake of completeness.

\begin{proof}
Note that 
\begin{equation} \label{eqn:conv}
 \delta^{-1} 1_{S^\delta(0)} \les \delta^{-n} 1_{B_{2\delta}(0)} * \sigma \les \delta^{-1} 1_{S^{2\delta}(0)}.
\end{equation}
Since $\phi \geq 1_{B_2(0)}$, we have $\phi_\delta* \sigma(x)  \ges \delta^{-1} 1_{S^\delta(0)}(x)$ by \eqref{eqn:conv}. Therefore, 
\[ \NT f(x) \les \sup_{u\in T} |f|*\phi_\delta*\sigma (x-u) =   \MT (|f|*\phi_\delta) (x). \]

For the second statement, we note that $\phi_\delta(x)$ decays rapidly away from $B_\delta(0)$. Using a dyadic decomposition, we get
 \[ |\phi_\delta|*\sigma(x) \les 
\sum_{\delta\leq 2^k\delta\leq 1 } 2^{-kN} 
(2^k\delta)^{-1}1_{S^{2^{k+1}\delta}(0)}(x) + \frac{\delta^{N}}{(1+|\cdot|)^N} * \sigma(x).\] by \eqref{eqn:conv}. Therefore, 
\[ \| \MT(f*\phi_\delta) \|_q \leq \| \sup_{u\in T} |f|*|\phi_\delta|*\sigma (\cdot-u) \|_q \les \sum_{\delta\leq 2^k\delta\leq 1 } 2^{-kN} \| \NTk f \|_q + \delta^N \| f\|_p. \]
Since $A(2^k \delta)\leq A(\delta)$ for $k\geq0$ by the assumption, the claim follows from summing over the geometric series.
\end{proof}

\begin{proof}[Proof of \Cref{prop:AVG-using-NT}]

We use the $L^{3/2}$ and $L^{4/3}$ bounds from \Cref{section:NM}. Since $T$ has finite $s$-dimensional upper Minkowski content and $s \leq n-1$, it follows that $T$ has finite $(n-1)$-dimensional upper Minkowski content. By \Cref{remark:NT-same-bounds-as-NM} and \Cref{lem:compare}, we obtain
\begin{align*}\| \sup_{u\in T} |f*\psi_j*\sigma (\cdot + u)| \|_{3/2} &\les j^{1/3} 2^{j/6} \|f\|_{3/2}
	&(n \geq 3) \\
	\| \sup_{u\in T} |f*\psi_j*\sigma (\cdot + u)| \|_{4/3} &\les j^{1/4} 2^{j/4} \|f\|_{4/3}
&(n \geq 4)
\end{align*}
Interpolating these with the $p=2$ case of \eqref{eqn:smallk} gives the desired bounds.
\end{proof}

We note that \Cref{thm:MT} and \Cref{lem:compare} imply bounds for $\| \NT \|_{L^p\to L^p}$ uniform in $\delta$ for a range of $p$ provided that $0\leq s<n-1$. For the case $s=n-1$, we give a Fourier analytic proof of the $L^2$-bound \eqref{eq:NM-L2} by using Proposition \ref{prop:AVG}. 
\begin{theorem}\label{thm:NTfulldim}
Let $0<\delta<1/2$ and $T \subset \R^n$ be a compact set with finite $(n-1)$-dimensional upper Minkowski content. Then 
\[ \| \NT \|_{L^p\to L^p} \les
\begin{cases}
\delta^{-(\frac{2}{p}-1)} & \text{ for } 1\leq p<2 \\
(\log \delta^{-1})^{1/p} & \text{ for } 2\leq p\leq \infty.
\end{cases} \]
\end{theorem}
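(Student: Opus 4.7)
The plan is to prove the two endpoint estimates $\|\NT f\|_1 \lesssim \delta^{-1}\|f\|_1$ and $\|\NT f\|_2 \lesssim (\log \delta^{-1})^{1/2}\|f\|_2$ and then interpolate. The $L^1$ bound is immediate from the pointwise estimate
\[
 \NT f(x) \lesssim \delta^{-1}\,(|f| \ast \mathbf{1}_{B(0,R)})(x),
\]
where $R$ depends only on $\mathrm{diam}(T)$: since $|S^\delta(0)|\gtrsim \delta$ and $S^\delta(0)\subset B(0,2)$, the supremum over $u\in T$ is absorbed by enlarging the ball to account for $T$, after which Young's inequality yields the $L^1$ bound. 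Riesz--Thorin interpolation between the $L^1$ and $L^2$ bounds gives $\lesssim \delta^{-(2/p-1)}$ for $1\leq p<2$ (with any extraneous $\log$-factors absorbed into $\les$), and interpolation between the $L^2$ bound and the trivial $\|\NT\|_{\infty\to\infty}\leq 1$ gives $\lesssim (\log \delta^{-1})^{1/p}$ for $2\leq p\leq \infty$.

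For the $L^2$ bound, I will use \Cref{lem:compare}(1) to reduce to $\|\MT g\|_2$ where $g := |f|\ast \phi_\delta$ for a Schwartz mollifier $\phi_\delta$, so that $\wh g$ decays rapidly above frequency $\delta^{-1}$. Decomposing $\sigma = \sigma * \varphi_0 + \sum_{j\geq 1}\sigma * \psi_j$ and discarding the pieces $g*\psi_j*\sigma$ with $2^j \gg \delta^{-1}$ (which contribute $O((2^j\delta)^{-N})$ in $L^2$), only $J := O(\log \delta^{-1})$ dyadic scales remain. Cauchy--Schwarz in $j$ yields
\[
\sup_{u \in T}\Bigl|\sum_{2^j \lesssim \delta^{-1}} g*\psi_j*\sigma(x+u)\Bigr|
\leq J^{1/2}\Bigl(\sum_j \sup_{u \in T}|g*\psi_j*\sigma(x+u)|^2\Bigr)^{1/2}.
\]

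The crux is a frequency-localized bound
\[
\|\sup_{u \in T}|G(\cdot+u)|\|_2 \lesssim \|G\|_2 \qquad \text{whenever } \wh G \text{ is supported in } \{|\xi|\sim 2^j\}.
\]
This follows from the argument of \Cref{lem:discrete} (which, for any such $G$, loses only $\#T_j \lesssim 2^{(n-1)j}$ in passing from a single translate to the supremum, by the assumption $N(T,2^{-j})\lesssim 2^{(n-1)j}$) combined with the Plancherel decay $\|G\ast\sigma\|_2 \lesssim 2^{-j(n-1)/2}\|G\|_2$ coming from $|\wh\sigma(\xi)|\lesssim |\xi|^{-(n-1)/2}$; the exponents cancel exactly. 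Applying this to $G_j := g*\psi_j*\sigma$ (which is frequency-localized at $2^j$) and invoking Plancherel orthogonality gives
\[
\|\MT g\|_2^2 \lesssim J \sum_j \|g*\psi_j\|_2^2 \lesssim J\|g\|_2^2 \lesssim (\log \delta^{-1})\|f\|_2^2.
\]

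The main obstacle is the exact cancellation between $\#T_j\sim 2^{(n-1)j}$ and the $\sigma$-decay $2^{-j(n-1)/2}$ in $L^2$: it is precisely the critical case $s=n-1$ in \Cref{prop:AVG} (where both exponents in the min vanish at $p=2$) that makes this frequency-localized bound sharp. This cancellation is what lets us trade the naive $\log \delta^{-1}$ loss from summing $J$ frequency pieces via the triangle inequality for the sharper $(\log \delta^{-1})^{1/2}$ loss via Cauchy--Schwarz and Plancherel orthogonality; everything else is standard Littlewood--Paley bookkeeping.
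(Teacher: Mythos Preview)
Your $L^2$ argument is essentially the paper's: reduce via \Cref{lem:compare}(1), Littlewood--Paley decompose $\sigma$, truncate to $J\sim\log\delta^{-1}$ scales, and exploit that at $p=2$, $s=n-1$ the loss $(\#T_j)^{1/2}\sim 2^{j(n-1)/2}$ from \Cref{lem:discrete} exactly cancels the gain $2^{-j(n-1)/2}$ from $|\wh\sigma|$. However, your displayed ``crux'' inequality $\|\sup_{u\in T}|G(\cdot+u)|\|_2\lesssim\|G\|_2$ for band-limited $G$ is \emph{false} as written (take $G$ essentially supported on a $2^{-j}$-ball; the ratio is $\sim(\#T_j)^{1/2}$). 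What you actually need, and what your prose describes, is
\[
\bigl\|\sup_{u\in T}|h*\psi_j*\sigma(\cdot+u)|\bigr\|_2\lesssim\|h*\psi_j\|_2,
\]
i.e.\ the $p=2$, $s=n-1$ case of \Cref{prop:AVG}. Applied to $h=g$ (not to $G_j=g*\psi_j*\sigma$), this feeds correctly into your Cauchy--Schwarz/Plancherel step.

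The more substantive gap is in the range $1<p<2$. Interpolating $\|\NT\|_{1\to1}\lesssim\delta^{-1}$ with $\|\NT\|_{2\to2}\lesssim(\log\delta^{-1})^{1/2}$ produces $\delta^{-(2/p-1)}(\log\delta^{-1})^{1-1/p}$, and in this paper $\les$ means $\lesssim$, so the $\log$ factor cannot be ``absorbed'' (indeed, the Remark following the theorem emphasizes that there is \emph{no} logarithmic factor here). The paper instead applies \Cref{prop:AVG} directly at each $p\in[1,2)$ to get a per-scale bound $2^{j(2/p-1)}\|f*\phi_\delta*\tilde\psi_j\|_p$ and sums the resulting geometric series over $2^j\lesssim\delta^{-1}$; since $2/p-1>0$, this yields exactly $\delta^{-(2/p-1)}$ with no $\log$. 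Replacing your interpolation step by this geometric summation fixes the proof.
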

\begin{proof}
Without loss of generality, we may assume that $f\geq 0$. Let $\phi_\delta(x) = \delta^{-n} \phi(x/\delta)$, where $\phi$ is a function satisfying $\phi \geq 1_{B_2(0)}$ with a compact Fourier support. By \Cref{lem:compare}, it is enough to estimate the $L^p$-norm of $\MT(f*\phi_\delta)$.

We recall the decomposition $\sigma = \varphi_0*\sigma + \sum_{j\geq 1} \tilde{\psi_j}*\psi_j*\sigma$ from Section \ref{sec:Littlewood-Paley}. In the following, we suppress the term $\varphi_0*\sigma$ as it behaves similar to $\tilde{\psi_1}*\psi_1*\sigma$. 
\begin{align*}
 \| \NT f\|_p \les \| \MT(f*\phi_\delta) \|_p &\les \sum_{j\geq 1} \| \sup_{u\in T} |f*\phi_\delta*\tilde{\psi_j}*\psi_j *\sigma(\cdot +u)| \|_p \\
 &\les \sum_{j\geq 1} 2^{j(\frac{2}{p}-1)} \| f*\phi_\delta*\tilde{\psi_j} \|_p,
\end{align*}
where the sum in $j$ is a finite sum for $2^j \les \delta^{-1}$ due to the frequency localization of $\phi_\delta$. We have used Proposition \ref{prop:AVG} in the last inequality. Since $\| f*\phi_\delta*\tilde{\psi_j} \|_p \les \| f \|_p$, this yields the claim for $1\leq p<2$. 

For $p=2$, we use Cauchy-Schwarz and Plancherel to get
\begin{align*}
 \| \NT f\|_2 \les \sum_{j} \| f*\phi_\delta*\tilde{\psi_j} \|_2 &\les (\log \delta^{-1})^{1/2} \Big( \sum_{j}  \| f*\phi_\delta*\tilde{\psi_j} \|_2^2 \Big)^{1/2} \\ &\les (\log \delta^{-1})^{1/2}  \| f \|_2. 
 \end{align*}
The case $p>2$ follows from interpolation with the trivial $L^\infty$ bound.
\end{proof}

\begin{remark}
For $n=2$ and the range $1 < p < 2$, \Cref{thm:NTfulldim} yields slightly better bounds than the proof in \Cref{section:NM}, since here, there is no logarithmic factor. 
\end{remark}

\subsection{Spherical averages relative to fractal measures}
In this section, we relate \eqref{eqn:STreduction} with spherical averages relative to fractal measures. We recall that $\mathcal{C}(\alpha)$ is defined in \eqref{eqn:alpha_dimensional_measure}.

\begin{lemma}\label{cor:linearize}
Let $T\subset \R^n$ be a compact set with finite $s$-dimensional upper Minkowski content for some $0\leq s < n-1$. Then \eqref{eqn:STreduction} holds if
\begin{equation}\label{eqn:smoothingfractal}
   \| \avg(f*\psi_j) \|_{L^p(\nu)} \leq A \| f\|_{L^p(\R^n)} 
\end{equation}
holds for any $\nu\in \mathcal{C}(\alpha)$ with $\alpha=n-s$ for some $A>0$ independent of $f$ and $\nu$.
\end{lemma}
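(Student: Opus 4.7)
The plan is to linearize the supremum on the left-hand side of \eqref{eqn:STreduction} and reinterpret the resulting quantity as an $L^p$-norm against a fractal measure. By a standard measurable selection (e.g., discretizing $T$ and $[1,2]$ into fine nets and picking pointwise maximizers), we may choose measurable maps $u:\R^n\to T$ and $t:\R^n\to[1,2]$ for which the supremum is (essentially) realized at every $x$, so that
\begin{equation*}
\big\|\sup_{u',t'}|\avg(f*\psi_j)(\cdot+t'u',t')|\big\|_{L^p(\R^n)} \lesssim \big\|\avg(f*\psi_j)\big\|_{L^p(\mu)},
\end{equation*}
where $\mu$ denotes the pushforward of Lebesgue measure on $\R^n$ under the map $\Phi(x) := (x+t(x)u(x),\, t(x))$.

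The main step is to verify a Frostman-type bound $\mu(B_r) \lesssim r^{n-s}$ for balls $B_r \subset \R^{n+1}$ of radius $r\leq 1$. Given $B_r(y_0,t_0)$, any $x$ with $\Phi(x)\in B_r(y_0,t_0)$ satisfies $|t(x)-t_0|\leq r$ and $x\in y_0 - t(x)u(x) + B_r(0)$, so the preimage is contained in $y_0 - \{tv : t\in[t_0-r,t_0+r],\, v\in T\} + B_r(0)$. Since $N(T,r)\lesssim r^{-s}$ and $T$ is bounded (and $|t_0|=O(1)$ by the constraint $t(x)\in[1,2]$), this set is covered by $O(r^{-s})$ balls of radius $O(r)$, giving the desired volume bound. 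This is the only place the Minkowski-content hypothesis on $T$ enters, and it is precisely what matches $\alpha=n-s$.

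The main obstacle is that $\mu$ has infinite total mass, while every measure in $\mathcal{C}(n-s)$ is compactly supported in $\R^n\times(0,4)$, so we must localize. Fix a partition of unity $\{\chi_Q\}$ subordinate to unit cubes $Q\subset\R^n$, and let $\mu_Q$ be the pushforward of $\chi_Q(x)\,dx$ under $\Phi$; each $\mu_Q$ has diameter and total mass $O(1)$, inherits the Frostman bound from $\mu$, and so $\mu_Q/C_0\in\mathcal{C}(n-s)$ for an absolute constant $C_0$ (using $s<n$ for the bound at scales $r\gtrsim 1$). Writing
\begin{equation*}
\big\|\avg(f*\psi_j)\big\|_{L^p(\mu)}^p = \sum_Q \|\avg(f*\psi_j)\|_{L^p(\mu_Q)}^p,
\end{equation*}
we apply \eqref{eqn:smoothingfractal} to each $\mu_Q$ with $f$ replaced by a cutoff to the $O(1)$-neighborhood of $Q$; the Schwartz tails of $\psi_j$ produced by this cutoff are easily absorbed via the rapid decay of $\psi_j$ (alternatively, via the weak-type bound of \Cref{lem:L1}). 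Summing in $Q$ using bounded overlap of the localized $f$'s yields the required $A\|f\|_{L^p(\R^n)}$ bound. The substantive content is the Frostman estimate above; the remaining work is routine bookkeeping of this global-to-local reduction.
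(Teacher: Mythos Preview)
Your proposal is correct and follows essentially the same approach as the paper: linearize the supremum via measurable selections, push Lebesgue measure forward through $\Phi(x)=(x+t(x)u(x),t(x))$, and verify the Frostman bound $\mu(B_r)\lesssim r^{n-s}$ from the covering-number hypothesis on $T$. The only cosmetic difference is the order of operations: the paper localizes to a unit ball first and then builds the measure (via the Riesz representation theorem, with explicit cutoffs $\chi,\eta$), whereas you build the global pushforward first and localize afterward with a partition of unity; both handle the Schwartz tails of $\psi_j$ in the same standard way.
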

\begin{proof}
First, we note that it suffices to prove a local estimate of the maximal function on a unit ball. This can be seen from the decomposition $f = \sum_{Q} f_Q$, where $f_Q$ is supported on a lattice unit cube $Q$ and then observing that the function $|f_Q*\psi_j*\sigma_t (\cdot + tu)|$ decays rapidly away from $O(2^{c\epsilon j})$ neighborhood of $Q$ for some $c>0$. See, for instance, the proof of Lemma 2.4 in \cite{HKL} for details.  

We use the Kolmogorov-Seliverstov-Plessner linearization, \emph{cf}. \cite[Chapter XIII]{Zygmund}. For given measurable functions $t : \R^n \to [1,2]$ and $u: \R^n \to T$, define
\begin{equation*}
   T_j f(x) :=  f*\psi_j*\sigma_{t(x)} (x+ t(x)u(x)) = \avg (f*\psi_j)(x+t(x)u(x),t(x)).
\end{equation*}
Then it suffices to prove that there exists a measure $\nu \in \mathcal{C}(\alpha)$ for $\alpha = n-s$, such that 
\[ \| T_j f \|_{L^p(B^n_1)} \les \| \avg(f*\psi_j) \|_{L^p(\nu)}, \]
where the implicit constant is independent of $t, u$ and $f$.

Let $l$ be a positive linear functional on $C_c(\R^{n}\times \R)$ defined by
\[ l(F) = \int F(x+t(x)u(x), t(x)) dx.\]
Then by the Riesz representation theorem, there exists a unique Radon measure $\tilde{\nu}$ on $\R^n\times \R$ such that 
\[ l(F) = \int F(x,t) d\tilde{\nu}(x,t).\]

Let $\chi\geq 0$ be a smooth function such that $1_{B^n_C} \leq \chi \leq 1_{B^n_{2C}}$ for a sufficiently large absolute constant $C=C_T>0$. With $F(x,t)= |\avg (f*\psi_j) (x,t)|^p \chi(x)\eta(t)$ for a smooth cutoff function $1_{[1,2]} \leq  \eta \leq 1_{[1/2,4]}$, we have
\[ \int_{B_1^n} |T_jf|^p  \leq l(F) \sim \int |\avg (f*\psi_j) (x,t)|^p d\nu(x,t), \]
where we set $d\nu(x,t) = c \chi(x)\eta(t) d\tilde{\nu}(x,t)$ for some small absolute constant $c=c_{n,s,T}>0$.

It remains to verify that $\nu \in \mathcal{C}(\alpha)$ for $\alpha=n-s$ provided that $c>0$ is sufficiently small (cf. \Cref{lem:count}). To see this, fix a ball $B^{n+1}_r$ centered at $(x_0,t_0) \in \R^n\times  \R$ of radius $0<r\leq 1$. We may further assume that $0<r\leq 1/2$ since any ball of radius comparable to 1 is a union of $O_n(1)$ balls of radius $1/2$. Observe that 
\[\nu(B^{n+1}_r)  \leq c | \{ x \in \R^n : |x+t(x)u(x)- x_0|^2+ |t(x)-t_0|^2 \leq r^2  \}|. \]
We may assume that $t_0 \sim 1$ since otherwise the set under consideration is empty. Note that if $|x+t(x)u(x)- x_0|^2+ |t(x)-t_0|^2 \leq r^2 $, then by the triangle inequality and the assumption that $T$ is compact,
\[ |x+t_0 u(x) - x_0| \leq |x+ t(x) u(x) - x_0| + |(t(x)-t_0) u(x)| \les r, \]
so $x \in x_0 - \nbd{t_0 T}{O(r)}$. Therefore, 
 \[ \nu(B^{n+1}_r) \leq c| x_0 - \nbd{t_0 T}{O(r)}| = c|\nbd{t_0 T}{O(r)}|  \les cr^{n} N(t_0T,O(r))\les cr^{n-s}. \] 
This finishes the proof.
\end{proof}

We discuss the connection between spherical averages and the half-wave propagator $e^{it\sqrt{-\Delta}}$ associated with the Fourier multiplier $e^{it|\xi|}$. By the method of stationary phase (see e.g. \cite{Sogge}), one may write
\begin{equation}\label{eq:stationaryphase}
   \ft{\sigma}(\xi) =  e^{i|\xi|} a_{+}(\xi) + e^{-i|\xi|} a_{-}(\xi), 
\end{equation}
for $a_\pm$ satisfying $|\partial^m a_\pm(\xi) | \les (1+|\xi|)^{-\frac{n-1}{2} -|m| }$ for each multi-index $m$. Therefore, for $t\in [1,2]$, we may write
\begin{equation}\label{eq:avgtowave}
\begin{split}
\avg(f*\psi_j)(x,t) = 2^{-j\frac{n-1}{2}} \sum_{\pm} \int e^{i x\cdot \xi \pm i t|\xi| } a_{j,\pm}(t,\xi) \ft{f*\psi_j}(\xi) d\xi
\end{split}
\end{equation}
where $a_{j,\pm}(t,\xi) := 2^{j(n-1)/2}a_\pm(t\xi)\ft{\tilde{\psi_j}}(\xi) \eta(t)\in C^\infty(\R^n\times [1/4,4] )$ for a smooth bump function $\eta$ supported on $[1/2,4]$. We note that $a_{j,\pm}(t,\xi)$ is supported on $\{(\xi,t): |\xi|\sim 2^j, t\sim 1 \}$  and satisfies 
\begin{equation}\label{eq:symbolbound}
 |\partial_{t}^l \partial_{\xi}^m a_{j,\pm}(t,\xi)| \les_{m, l} 2^{-|m|j}
\end{equation}
for every $l\geq 0$ and multi-index $m$. Since the contribution from $+$ and $-$ terms in \eqref{eq:avgtowave} can be handled similarly, from now on we shall ignore the contribution from the $-$ term.

We state a version of Sogge's local smoothing conjecture for the wave equation to be used later. Let $I\subset (0,4)$ be a compact interval. The conjecture asserts that 
\begin{equation}\label{eqn:smoothing0}
\| e^{it\sqrt{-\Delta}} (f*\psi_j) \|_{L^p(\R^{n} \times I)} \less 2^{j\alpha(p)} \| f \|_{L^p(\R^n)},
\end{equation}
where $\alpha(p)= \frac{n-1}{2} - \frac{n}{p}$ for $p\geq  2n/(n-1)$ and $\alpha(p)=0$ for $2\leq p \leq 2n/(n-1)$. We will later use, in \Cref{prop:duality}, the fact that the local smoothing estimate \eqref{eqn:smoothing0} holds for \emph{some} $p\geq 2n/(n-1)$; we may take, for instance, $p= 2(n+1)/(n-1)$ for every $n\geq 2$ by the Bourgain--Demeter decoupling inequality for the cone \cite{BD}. We refer the reader to \cite{Beltran-Hickman-Sogge-exposition} for a nice survey of the theory of Fourier integral operators and local smoothing estimates.

\Cref{prop:reduce} and \Cref{cor:linearize} reduce $L^p$-estimates for $\ST$ to estimates for spherical averages relative to fractal measures \eqref{eqn:smoothingfractal}. In view of \eqref{eq:avgtowave}, such estimates can be deduced from local smoothing estimates \eqref{eqn:smoothing0} with the Lebesgue measure on $\R^n\times I$ replaced by measures $\nu \in \mathcal{C}(\alpha)$. We recall some special cases of known results in this direction from \cite{CHL,HKL}. 

\begin{theorem}[{\cite[Theorem 3.1]{CHL}}, see also {\cite[Theorem 3.3]{HKL}}] \label{thm:CHL} 
Let $n\geq 3$, $1<\alpha\leq n$ and $\nu \in \mathcal{C}(\alpha)$. Then 
\[ 
    2^{-j(n-1)/2} \| e^{it\sqrt{-\Delta}} (f*\psi_j) \|_{L^2(\nu)} \less 2^{-j\min( \frac{\alpha-1}{2}, \frac{n+2\alpha-5}{8})} \| f\|_{L^2(\R^n)}.
\]
\end{theorem}
We note that the bound is sharp when $n=3$ or $1<\alpha \leq \frac{n-1}{2}$; see \cite{CHL}.

\begin{theorem}[{\cite[Theorem 1.5]{HKL}}] \label{thm:HKL} Let $n=2$, $1<\alpha\leq 2$ and $\nu \in \mathcal{C}(\alpha)$. Then there exists $\epsilon = \epsilon(\alpha,p) > 0$ such that
\[
   2^{-j/2}\| e^{it\sqrt{-\Delta}} (f*\psi_j) \|_{L^p(\nu)} \les  2^{-j\epsilon } \| f\|_{L^p(\R^2)} 
\]
for $p>\max(4-\alpha, (6-2\alpha)/\alpha)$.
\end{theorem}
For the proof of \Cref{thm:ST}, we additionally need the following estimates which is better than \Cref{thm:HKL} for $1<\alpha<6/5$ for $n=2$ and \Cref{thm:CHL} for $\alpha > (n+3)/2$ for $n\geq 3$. 

\begin{proposition}\label{prop:alpha1} Let $n=2$, $1<\alpha\leq 3$ and $\nu \in \mathcal{C}(\alpha)$. Then there exists $\epsilon = \epsilon(\alpha,p) > 0$ such that
\begin{equation}\label{eqn:fractalavg2d}
   \| \avg (f*\psi_j) \|_{L^p(\nu)} \les  2^{-j\epsilon } \| f\|_{L^p(\R^2)} 
\end{equation}
for any $p>3$. Moreover, for $n\geq 2$, 
\begin{equation}\label{eqn:fractalavg}
     \| \avg (f*\psi_j) \|_{L^2(\nu)} \les 2^{-j \frac{\alpha-2}{2}} \| f\|_{L^2(\R^n)}.
\end{equation}
\end{proposition}
\Cref{prop:alpha1} will be proved in \Cref{sec:duality} using the geometric input from \Cref{prop:M2}.

\subsection{A geometric approach to fractal local smoothing estimates} \label{sec:duality}
In this section, we prove \Cref{prop:alpha1}, \Cref{thm:ST} and \Cref{thm:avg_frac}. The proof relies on a geometric characterization of fractal local smoothing estimates. We first recall the following basic properties of measures in $\mathcal{C}(\alpha)$ defined in \eqref{eqn:alpha_dimensional_measure}.

\begin{lemma}[cf.~Lemmas 2.7 and 3.1 in \cite{HKL}]\label{lem:Calpha}
    Let $\nu \in \mathcal{C}(\alpha)$, $N>n+1$ and $1\leq q <\infty$. If $F$ is a function whose Fourier transform is supported on $B^{n+1}(0, O(\delta^{-1}))$ for some $\delta\in (0,1/2)$, then  
    \begin{equation} \label{eq:measuretoweight}
        \| F \|_{L^q(\nu)} \les \| F \|_{L^q(\nu*\Psi_{\delta,N})},
    \end{equation}
  where  $\Psi_{\delta,N} (z) = \delta^{-(n+1)}(1+\delta^{-1}|z|)^{-N}$ for $z\in \R^{n+1}$. Moreover, 
    \begin{enumerate}[label=(\roman*)]
        \item $\| \nu*\Psi_{\delta,N} \|_{L^\infty} \les \delta^{\alpha-(n+1)}$ \label{eq:measuresup}
        \item $\| \nu*\Psi_{\delta,N} \|_{L^\infty(\R^n\times [2^{-1},5\cdot 2^{-1}]^c)} \les \delta^{N-(n+1)}$  \label{eq:measuresupexceptional}
        \item $\int_{B_r} \nu*\Psi_{\delta,N} \les r^\alpha$ for any ball $B_r \subset \R^{n+1}$ of radius $r\in [\delta, 1]$. \label{eq:measureball}
    \end{enumerate}
In the above estimates, implicit constants may depend on $n,N$ and $\alpha$. 
\end{lemma}

 From now on, we fix $\delta = 2^{-j} \ll 1$ and write $\psi_\delta := \psi_j$. Note that the Fourier transform of the function $(x,t) \mapsto e^{it\sqrt{-\Delta}} (f*\psi_\delta)(x)$ is supported on a truncated cone contained in a ball of radius $\sim \delta^{-1}$. Thus, we obtain the following corollary.
\begin{corollary}\label{eqn:removemesure} Let $\nu \in \mathcal{C}(\alpha)$ and $1\leq q  < \infty$. Then 
\[
\| e^{it\sqrt{-\Delta}} (f*\psi_\delta) \|_{L^{q}(\nu)} \les \delta^{(\alpha-(n+1))/q} \| e^{it\sqrt{-\Delta}} (f*\psi_\delta) \|_{L^{q}(\R^{n} \times I)}. \]
\end{corollary}

   \Cref{lem:Calpha} essentially follows from proofs of Lemmas 2.7 and 3.1 in \cite{HKL}. However, our definition of the class of measures $\mathcal{C}(\alpha)$ is slightly different from that paper in that we only require $0<r\leq 1$ in the condition \eqref{eqn:alpha_dimensional_measure} which necessitates a few minor modifications. For completeness, we sketch the proof. 
    
\begin{proof}[Proof of \Cref{lem:Calpha}]   
    Inequality \eqref{eq:measuretoweight} follows from the fact that, by the Fourier localization of $F$, we have $|F|^q=|F*\psi_\delta|^q \les |F|^q*|\psi_\delta|$ for a smooth function $\psi_\delta$ satisfying $|\psi_\delta(z)|\les_N \Psi_{\delta, N} (z)$.

    For the bounds on $\nu*\Psi_{\delta,N}$, we use the dyadic decomposition
    \begin{equation}\label{eqn:measure_dyadic}
       \nu*\Psi_{\delta,N}(z) \les \delta^{-(n+1)} \sum_{l=0}^\infty 2^{-lN} \nu(B(z,2^l \delta)).
    \end{equation}
    In the sum over $l$, we consider the following parts separately: a) $\delta \leq 2^l \delta \leq 1$ and b) $2^l \delta > 1$. For part a), we use $\nu(B(z,r)) \leq r^\alpha$ for $r\leq 1$, while for part b), we use $\nu(B(z,r)) \leq Cr^{n+1}$ for $r\geq 1$, which follows from covering $B(z,r)$ by balls of radius 1. Combining these estimates, we obtain \ref{eq:measuresup}. For \ref{eq:measuresupexceptional}, we need to use the additional fact that if $z\in \R^n \times [2^{-1},5\cdot 2^{-1}]^c$, then $\nu(B(z,2^l \delta))=0$ whenever $2^l \delta \leq 2^{-1}$ since $\nu$ is supported on $\R^n\times [1,2]$. 

    For part \ref{eq:measureball}, we consider the following two cases separately: a) $\delta \leq 2^l\delta \leq 1$ and b) $2^l \delta > 1$. Note that 
    \[ \int_{B_r} \nu(B(z,2^l\delta)) dz = \int |B(z',2^l\delta) \cap B_r|  d\nu(z'), \]
    where the Lebesgue measure $|B(z',2^l\delta) \cap B_r|$ is 0 unless $z'$ belongs to the ball of radius $r+2^l\delta$ centered at the center of $B_r$. For part a), we use $\int |B(z',2^l\delta) \cap B_r|  d\nu(z') \les \min(2^l\delta,r)^{n+1} \max(2^l\delta,r)^\alpha$. For part b), we use $\int_{B_r} \nu(B(z,2^l\delta)) dz \les (2^l\delta)^{n+1} r^{n+1}\leq (2^l\delta)^{n+1} r^{\alpha}$. Summing over $l$ in each case and applying \eqref{eqn:measure_dyadic} yields the claimed bound.
\end{proof}

For each $z=(x,t) \in \R^n\times [1,2]$, let $z^\delta$ and $S^\delta(x,t)$ denote the $\delta$-neighborhood of the sphere $S(x,t)=\{y\in \R^n: |x-y| = t\}$. For $0< \alpha \leq n+1$ and $A>0$, let $\mathcal{X}(\alpha, \delta, A)$ be the class of $\delta$-separated set of points $X$ in $\R^{n+1}$ such that 
\begin{equation}\label{eq:density}
 \# (X \cap B_r) \leq  Ar^\alpha,
\end{equation}
for any ball $B_r\subset \R^{n+1}$ of radius $r \in (\delta, 1]$ and $X^\delta \subset \R^{n}\times [1,2]$, where $X^\delta$ denotes the $\delta$-neighborhood of $X$. Here $A$ may depend on $\delta$, and we may assume that $0<A\les \delta^{-(n+1)}$.

\begin{lemma}\label{prop:duality}  Let $n\geq 2$,  $0<\delta \ll 1$, $0<\alpha \leq n+1$, and $1\leq p\leq q \leq \infty$. Consider the following statements.
\begin{align}
\| \sum_{z \in X} 1_{z^{\delta}} \|_{L^{p'}} &\less \delta A^{1/q} ( \# X)^{1/q'}, &\text{ for any } X\in\mathcal{X}(\alpha,\delta, A). \label{eq:dual1}\\
\| \sum_{z \in X} a_z 1_{z^{\delta}} \|_{L^{p'}} &\less \delta A^{1/q}  \left( \sum_{z\in X} |a_z|^{q'} \right)^{1/q'}, & \text{ for any } X\in\mathcal{X}(\alpha,\delta, A). \label{eq:dual} \\
 \delta^{(n-1)/2} \| e^{it\sqrt{-\Delta}} (f*\psi_\delta) \|_{L^q(X^\delta)} &\less (A\delta^{n+1})^{1/q}  \| f\|_{L^p(\R^n)},  &\text{ for any } X\in\mathcal{X}(\alpha,\delta, A). \label{eq:weight1} \\
  \delta^{(n-1)/2} \| e^{it\sqrt{-\Delta}} (f*\psi_\delta) \|_{L^q(\nu)} &\less  \| f\|_{L^p(\R^n)},  &\text{ for any } \nu \in\mathcal{C}(\alpha). \label{eq:measure} \\
   \| \avg (f*\psi_\delta) \|_{L^{q}(\nu)} &\less  \| f\|_{L^{p}(\R^n)},  &\text{ for any } \nu \in\mathcal{C}(\alpha). \label{eq:average_localized} 
\end{align}
Then $\eqref{eq:dual1} \implies \eqref{eq:dual} \implies \eqref{eq:weight1} \implies \eqref{eq:measure} \implies \eqref{eq:average_localized}$. 

Let $\frac{2n}{n-1} \leq p_n < \infty $ be an exponent for which the $L^{p_n}(\R^n) \to L^{p_n}(\R^n\times I)$ local smoothing estimate \eqref{eqn:smoothing0} holds. When $\alpha>1$, all the above statements are equivalent to the following:  for any $(1/\tilde{p},1/\tilde{q})$ in the open line segment connecting $(1/p,1/q)$ and $(1/p_n,1/p_n)$, there exists $\epsilon>0$ such that
\begin{align}
  \| \avg (f*\psi_\delta) \|_{L^{\tilde{q}}(\nu)} &\les \delta^{\epsilon} \| f\|_{L^{\tilde{p}}(\R^n)},  &\text{ for any } \nu \in\mathcal{C}(\alpha). \label{eq:average}
\end{align}
\end{lemma}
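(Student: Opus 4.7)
The plan is to prove the five-statement equivalence \eqref{eq:dual1}--\eqref{eq:average_localized} via a cycle of implications, relying throughout on the dictionary $\avg(f*\psi_\delta) \approx \delta^{(n-1)/2}e^{it\sqrt{-\Delta}}(f*\psi_\delta)$ afforded by the stationary-phase decomposition \eqref{eq:avgtowave}, and then to handle the extra equivalence with \eqref{eq:average} under $\alpha>1$ by a real-interpolation argument powered by local smoothing \eqref{eqn:smoothing0}.

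For the forward chain, I would argue as follows. The step $\eqref{eq:dual1}\Rightarrow\eqref{eq:dual}$ is a Marcinkiewicz-style upgrade: after discarding coefficients $a_z\le\delta^N$, dyadically decompose the index set into $O(\log\delta^{-1})$ level sets $X_k\subset X$, each of which inherits $\mathcal{X}(\alpha,\delta,A)$-membership, and sum via the triangle inequality (the logarithm is absorbed by $\less$). For $\eqref{eq:dual}\Rightarrow\eqref{eq:weight1}$, I use that the kernel of $\sigma_t*\psi_\delta$ is essentially $\delta^{-1}\mathbf{1}_{S^\delta(x,t)}$ (up to rapidly decaying tails); dualizing the $L^q(X^\delta)$ norm and decomposing the dual function into $\delta$-cube pieces around each $z\in X$ reduces the bound to the weighted indicator sum that appears in \eqref{eq:dual}, and then \eqref{eqn:smoothingtoaverage} converts $\avg$ into $\delta^{(n-1)/2}e^{it\sqrt{-\Delta}}$. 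For $\eqref{eq:weight1}\Rightarrow\eqref{eq:measure}$, I take $X$ to be a maximal $\delta$-separated subset of $\mathrm{supp}(\nu)$; the condition \eqref{eqn:alpha_dimensional_measure} then forces $X\in\mathcal{X}(\alpha,\delta,C\delta^{-\alpha})$, and because $F=e^{it\sqrt{-\Delta}}(f*\psi_\delta)$ is essentially constant on $\delta$-balls,
\[
\int |F|^q\,d\nu \;\lesssim\; \delta^\alpha\sum_{z\in X}|F(z)|^q \;\lesssim\; \delta^{\alpha-(n+1)}\int_{X^\delta}|F|^q,
\]
which combined with \eqref{eq:weight1} yields the desired bound. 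The step $\eqref{eq:measure}\Rightarrow\eqref{eq:average_localized}$ is immediate from \eqref{eqn:smoothingtoaverage}. The reverse implications closing the loop are symmetric: $\eqref{eq:average_localized}\Rightarrow\eqref{eq:measure}$ runs \eqref{eq:avgtowave} in reverse, using that the bounded symbols $a_\pm$ are invertible on frequency-localized input; $\eqref{eq:measure}\Rightarrow\eqref{eq:weight1}$ tests against a suitably smoothed version of $A^{-1}\sum_{z\in X}\delta_z$ (which lies in $\mathcal{C}(\alpha)$ by \eqref{eq:density}); $\eqref{eq:weight1}\Rightarrow\eqref{eq:dual}$ dualizes; and $\eqref{eq:dual}\Rightarrow\eqref{eq:dual1}$ is trivial.

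For the equivalence with \eqref{eq:average} under $\alpha>1$, the key ingredient is a genuine positive-power gain at the endpoint $(1/p_n,1/p_n)$. Local smoothing \eqref{eqn:smoothing0} gives $\|\avg(f*\psi_\delta)\|_{L^{p_n}(dx\,dt)}\less\delta^{n/p_n}\|f\|_{p_n}$, and the elementary inequality $\|F\|_{L^{p_n}(\nu)}\lesssim\delta^{(\alpha-n-1)/p_n}\|F\|_{L^{p_n}(dx\,dt)}$ (valid because $F$ is essentially constant on $\delta$-scale and $\nu(B_\delta)\le\delta^\alpha$) combines to yield $\|\avg(f*\psi_\delta)\|_{L^{p_n}(\nu)}\less\delta^{(\alpha-1)/p_n}\|f\|_{p_n}$, which is a positive power precisely when $\alpha>1$. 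Interpolating this endpoint bound with \eqref{eq:average_localized} at $(p,q)$ (where only a $\delta^{-\epsilon}$ loss is permitted) yields a net positive power gain $\delta^{\epsilon'}$ at every interior point of the segment, establishing \eqref{eq:average}. The converse $\eqref{eq:average}\Rightarrow\eqref{eq:average_localized}$ I expect to be the easier direction: localizing $f$ to unit cubes (using rapid off-sphere decay of the spherical convolution) and applying H\"older's inequality in $L^p(\R^n)$ and $L^q(\nu)$ (exploiting that $\nu$ is a finite measure) transfers the interior bound down to the endpoint $(p,q)$ with at worst a $\delta^{-\epsilon}$ loss.

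The main technical obstacle I foresee is establishing the two-sided equivalence $\|\avg(f*\psi_\delta)\|_{L^q(\nu)}\approx\delta^{(n-1)/2}\|e^{it\sqrt{-\Delta}}(f*\psi_\delta)\|_{L^q(\nu)}$. One direction follows immediately from \eqref{eq:avgtowave} since $a_\pm$ are bounded, but the reverse direction (required to close the cycle at $\eqref{eq:average_localized}\Rightarrow\eqref{eq:measure}$) forces one to disentangle the two half-wave propagators $e^{\pm it\sqrt{-\Delta}}$ and to invert the symbols $a_\pm$ as Fourier multipliers acting on the frequency-localized pieces, while simultaneously controlling the Schwartz tails that appear at every discretization step.
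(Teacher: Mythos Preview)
Your forward chain $\eqref{eq:dual1}\Rightarrow\eqref{eq:dual}\Rightarrow\eqref{eq:weight1}\Rightarrow\eqref{eq:measure}\Rightarrow\eqref{eq:average_localized}$ is essentially correct and matches the paper (the paper upgrades $\eqref{eq:dual1}$ to $\eqref{eq:dual}$ by interpolating with the trivial $\ell^1\to L^1$ and $\ell^{(q/p)'}\to L^\infty$ endpoints rather than by dyadic level-set decomposition, but your route also works). Note, however, that the first assertion of the lemma is only this forward chain; the reverse implications you sketch for that part are not claimed and are not needed.

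The real gap is in how you close the loop when $\alpha>1$. You correctly obtain $\eqref{eq:average_localized}\Rightarrow\eqref{eq:average}$ by interpolating with the local-smoothing endpoint. But your proposed converse $\eqref{eq:average}\Rightarrow\eqref{eq:average_localized}$ --- pushing interior bounds to the endpoint $(p,q)$ via localization and H\"older --- does not work: the implicit constant in $\eqref{eq:average}$ is allowed to blow up as $(\tilde p,\tilde q)\to(p,q)$, so no limiting argument recovers the endpoint. Likewise, the direct reverse step $\eqref{eq:average_localized}\Rightarrow\eqref{eq:measure}$ via inverting the stationary-phase symbols $a_\pm$ --- precisely the obstacle you flag --- is never attempted in the paper.

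The paper instead closes the loop via $\eqref{eq:average}\Rightarrow\eqref{eq:dual1}$, and the idea you are missing is this: the genuine gain $\delta^{\epsilon}$ in $\eqref{eq:average}$ allows one to \emph{sum over all dyadic scales} $\delta=2^{-j}$, yielding the scale-free bound $\|\avg f\|_{L^{\tilde q}(\nu)}\les\|f\|_{L^{\tilde p}}$ for the full (unlocalized) spherical average. One then checks that for any $X\in\mathcal{X}(\alpha,\delta,A)$ the measure $d\nu=c(A\delta^{n+1})^{-1}\mathbf{1}_{X^\delta}\,dx\,dt$ lies in $\mathcal{C}(\alpha)$, and testing the scale-free bound against this $\nu$ dualizes to give $\eqref{eq:dual1}$ at the nearby exponents $(\tilde p,\tilde q)$. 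A final interpolation between two such nearby pairs (on either side of $(p,q)$) recovers the exact exponent with only a $\delta^{-\epsilon}$ loss. This route entirely sidesteps the symbol-inversion problem, so the ``main technical obstacle'' you identify is in fact a red herring.
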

\begin{remark} 
\label{remark:duality-lemma-constants}
   For any $M=M_{\delta,n,\alpha,p,q}>0$, the chain of implications from \eqref{eq:dual1} to \eqref{eq:average_localized} holds under the replacement of $\less$ by $\less M$ in each inequality. Moreover, the implication $\eqref{eq:measure} \implies \eqref{eq:average_localized}$ holds with $\less$ replaced by $\les M$ in both inequalities.
\end{remark}
\begin{remark}
 We may take any finite $p_n \geq 2(n+1)/(n-1)$ for the exponent $p_n$ in \Cref{prop:duality} for every $n\geq 2$; see the discussion following \eqref{eqn:smoothing0}. 
\end{remark}
\begin{remark}
    Without loss of generality, we may assume that each $X \in \mathcal{X}(\alpha, \delta, A)$ is contained in a unit ball in this section. This is because \eqref{eq:dual1} is equivalent to its local version concerning $X\in \mathcal{X}(\alpha, \delta, A)$ contained in a unit ball. \end{remark}

Before proving \Cref{prop:duality}, we give proofs of \Cref{prop:alpha1}, \Cref{thm:ST} and \Cref{thm:avg_frac}.
\begin{proof}[Proof of \Cref{prop:alpha1}]
Let $n=2$. We have \eqref{eq:dual1} for $p=q=3$ by \Cref{prop:M2} for any $1\leq \alpha\leq 3$. Fix $p>3$ and $p_n>p$ such that the local smoothing estimate \eqref{eqn:smoothing0} holds at the exponent $p_n$. Then \eqref{eqn:fractalavg2d} follows from \Cref{prop:duality} for $1<\alpha\leq 3$.

Next, let $n\geq 2$. By the implication $\eqref{eq:measure} \implies \eqref{eq:average_localized}$, \eqref{eqn:fractalavg} follows from
\begin{equation}\label{eqn:prop:duality}
\| e^{it\sqrt{-\Delta}} (f*\psi_j) \|_{L^2(\nu)} \les 2^{j(n+1-\alpha)/2} \| f \|_{L^2(\R^2)}.
\end{equation}
Note that \eqref{eqn:prop:duality} is a consequence of \Cref{eqn:removemesure} (and \Cref{remark:duality-lemma-constants}) and the Plancherel theorem.

\end{proof}

\begin{proof}[Proof of \Cref{thm:ST}]
Let $n=2$. In view of \Cref{prop:duality}, we know that \eqref{eqn:fractalavg2d} holds for $p>\min\left(3,\max\left(4-\alpha,(6-2\alpha)/\alpha \right) \right)$  by combining \Cref{thm:HKL} and \Cref{prop:alpha1}. By \Cref{prop:reduce} and \Cref{cor:linearize}, $\ST$ is bounded on $L^p(\R^2)$ for the same $p$-range with $\alpha=2-s$, which completes the proof for $n=2$.

Let $n\geq 3$. By \Cref{prop:reduce}, \Cref{cor:linearize} and \Cref{prop:duality}, estimates from \Cref{thm:CHL} and \Cref{prop:alpha1} imply 
\begin{align*}
    &\| \sup_{l\in \Z} \sup_{u\in T} \sup_{1\leq t\leq 2} |\avg(f*\psi_{j-l})(\cdot + 2^ltu,2^lt)| \|_{L^2(\R^n)} \\
    &\less 2^{-j\max( \frac{\alpha-2}{2}, \min( \frac{\alpha-1}{2}, \frac{n+2\alpha-5}{8}) )} \| f \|_{L^2(\R^n)}.
\end{align*}
for $\alpha=n-s$. 
By interpolation with the weak-type estimate in \Cref{lem:L1}, there exists $\epsilon=\epsilon(p)>0$ 
such that
\[ 
    \| \sup_{l\in \Z} \sup_{u\in T} \sup_{1\leq t\leq 2} |\avg(f*\psi_{j-l})(\cdot + 2^ltu,2^lt)| \|_{L^p(\R^n)} \les 2^{-j\epsilon} \| f\|_{L^p(\R^n)}
\]
for $p> 1+ ( \max(n-s-1 , \min(n-s, (3n-2s-1)/4)) )^{-1} $. Therefore, by \Cref{prop:reduce}, $\ST$ is bounded on $L^p$ for the same $p$-range.
\end{proof}

\begin{proof}[Proof of \Cref{thm:avg_frac}]
    We recall that the solution $u$ can be written as 
    \[ u(x,t) = \cos(t\sqrt{-\Delta}) f(x) =  \frac{1}{2} e^{it\sqrt{-\Delta}} f(x) + \frac{1}{2} e^{-it\sqrt{-\Delta}} f (x). \]
Therefore, \eqref{eqn:fraclo} is a consequence of \Cref{prop:M2} and \Cref{prop:duality}. The inequality \eqref{eqn:avg_frac} follows from \Cref{prop:alpha1} by summing up frequency localized estimates \eqref{eqn:fractalavg2d}.
\end{proof}

The proof of \Cref{prop:duality} is fairly standard. We will use a pointwise bound for the kernel associated with the wave propagator, which can be obtained by using \eqref{eq:stationaryphase};
\[ e^{it\sqrt{-\Delta}} (f*\psi_\delta)(x) = \int f(x-y)\delta^{-n}K(\delta^{-1}y,\delta^{-1}t) dy, \] where $K$ satisfies
\begin{equation}\label{eq:kernel}
|K(y,t)| \les_N (1+|y|)^{-(n-1)/2} (1+||y|-t|)^{-N}.
\end{equation}
We note that the standard bound \eqref{eq:kernel} follows from the decay and the oscillation of the Fourier transform of the spherical measure; see e.g., \cite{Sogge}.

\begin{proof}[Proof of \Cref{prop:duality}]

$\eqref{eq:dual1} \implies \eqref{eq:dual}$:
An interpolation argument can be used to upgrade the restricted strong type estimate from \eqref{eq:dual1} to a strong type estimate 
\eqref{eq:dual}. To see that, we first note the following trivial $l^{1} \to L^1$ and $l^{r'} \to L^\infty$ estimates;
\begin{align}
\| \sum_{z \in X} a_z 1_{z^{\delta}} \|_{L^{1}} &\les \delta  \sum_{z \in X} |a_z| \label{eq:trivialL1} \\
\| \sum_{z \in X} a_z 1_{z^{\delta}} \|_{L^{\infty}} &\les \sup_{B_1}  \sum_{z \in X\cap B_1} |a_z| \les \| \{a_z\} \|_{l^{r'}(X)}  \sup_{B_1} \#(X\cap B_1)^{1/r} \nonumber \\
&\labelrel\lesssim{eq:density}
A^{1/r} \| \{a_z\} \|_{l^{r'}(X)}. \label{eq:trivialLinfty}
\end{align}
We choose $r=q/p \geq 1$ so that the line segment between $(1,1)$ and $(1/r',1/\infty)$ contains $(1/q',1/p')$. 
We first do a real interpolation between \eqref{eq:dual1} and \eqref{eq:trivialLinfty} to get a strong type $l^{q_1'} \to L^{p_1'}$ estimate, and then interpolate that estimate with \eqref{eq:trivialL1}. As we may take $(1/q_1',1/p_1')$ arbitrarily close to $(1/q',1/p')$, these interpolations imply \eqref{eq:dual}, since we are allowed to lose $\delta^{-\epsilon}$.

$\eqref{eq:dual} \implies \eqref{eq:weight1}$:
For any given $0<\epsilon \ll 1$, \eqref{eq:kernel} (with $N=100n/\epsilon$) implies that
\[ \delta^{(n-1)/2} |e^{it\sqrt{-\Delta}} (f*\psi_\delta) (x)| \les_\epsilon \delta^{-1} \int 1_{S^{\delta^{1-\epsilon}}(x,t)} |f| + \delta^{10n} f*(1+|\cdot|)^{-{10n}} (x). \]
The term involving $\delta^{10n}$ is harmless and will be ignored in the following. We have

\begin{align*}
\delta^{(n-1)q/2} \|e^{it\sqrt{-\Delta}} (f*\psi_\delta)\|_{L^q(X^\delta)}^q &\les_{\epsilon} \delta^{-q}  \sum_{z \in X} \delta^{n+1} \left( \int 1_{z^{O(\delta^{1-\epsilon})}} |f| \right)^q \\
&= \delta^{-q} \delta^{n+1}  \left(\int |f| \sum_{z\in X} a_z 1_{z^{O(\delta^{1-\epsilon})}} \right)^q \\
&\leq \delta^{-q} \delta^{n+1}  \| f\|_{L^p}^q \| \sum_{z\in X} a_z 1_{z^{O(\delta^{1-\epsilon})}} \|_{L^{p'}}^q 
\end{align*}
for some $a_z\geq0$ satisfying $\sum_{z\in X}  a_z^{q'} = 1$. Applying \eqref{eq:dual}, we obtain \eqref{eq:weight1}.

$\eqref{eq:weight1} \implies \eqref{eq:measure}$: By \Cref{lem:Calpha}, it suffices to prove \eqref{eq:measure} with $\nu$ replaced by $\nu*\Psi_{\delta,N}$ for some $N>n+1$. Note that $\nu*\Psi_{\delta,N}$ is essentially constant on $\delta$-balls in the sense that $\nu*\Psi_{\delta,N}(z) \sim \nu*\Psi_{\delta,N}(z')$ for $|z-z'|\leq \delta$. Therefore, we have
\begin{equation}\label{eq:decom}
\nu*\Psi_{\delta,N} \sim \sum_{l\in \Z} 2^l 1_{X_l^\delta}, 
\end{equation}
for some $\delta$-separated set of points $X_l \subset \R^{n+1}$. We may assume that $X_l$ is empty when $2^l \les \delta^C$ for some sufficiently large $C$, say $C=10n$, since the sum over $2^l \les \delta^C$ can be handled by using a trivial inequality. Moreover, we note that $X_l$ is empty when $2^l \gtrsim \delta^{-C}$ by \ref{eq:measuresup} of \Cref{lem:Calpha}. Thus, it suffices to deal with the sum over $O(\log \delta^{-1})$ many $l$ satisfying $\delta^C  \les 2^l \les \delta^{-C}$. We may assume, by taking $N\geq C+n+1$, that $X_l^\delta\subset \R^n \times [2^{-1}, 5\cdot 2^{-1}]$ by \ref{eq:measuresupexceptional} of \Cref{lem:Calpha}. Thus, there exists $l$ for which 
\begin{equation}\label{eq:pigeon}
  \| e^{it\sqrt{-\Delta}} (f*\psi_\delta) \|_{L^q(\nu*\Psi_{\delta,N})}  \less 2^{l/q} \| e^{it\sqrt{-\Delta}} (f*\psi_\delta) \|_{L^q(X_l^\delta)}. 
\end{equation}
Note that for $\delta<r\leq 1$, 
\[  \delta^{n+1} \#(X_l\cap B_r) \les |X_l^\delta \cap B_{r+\delta}| \les 2^{-l} \int_{X_l^\delta \cap B_{r+\delta}}  \nu*\Psi_{\delta,N} \les 2^{-l} r^\alpha, \]
by \ref{eq:measureball} of \Cref{lem:Calpha}.
Applying \eqref{eq:weight1} with $A\delta^{n+1}  \sim 2^{-l}$ in \eqref{eq:pigeon} finishes the proof of the implication. To be precise, we have to apply a slightly more general version of \eqref{eq:weight1} which holds for $\delta$-separated set of points $X$ satisfying \eqref{eq:density} such that $X^\delta \subset \R^n \times [2^{-1}, 5\cdot 2^{-1}]$. Note that \eqref{eq:dual1} can be extended to such a class of sets by scaling, which implies similar extensions for \eqref{eq:dual} and \eqref{eq:weight1} required. 

$\eqref{eq:measure} \implies \eqref{eq:average_localized}$:
Recall that we ignore the $-$ term in the sum in \eqref{eq:avgtowave}. In view of \eqref{eq:avgtowave}, this implication would be straightforward if we are allowed to replace $a_{j,+}(t,\xi)$ by $\eta(t)\ft{\tilde{\psi_j}}(\xi)$ as the term $\ft{\tilde{\psi_j}}(\xi)$ can be then absorbed into $f$ as $f*\tilde{\psi_j}$. To handle $a_{j,+}(t,\xi)$, we may employ an argument which is used to prove the $L^2$-boundedness of pseudo-differential operators; see e.g. \cite[Chapter VI \se 2]{Stein}. We give the argument in our setting for the convenience of the reader. 

For $\tau\in \R$, define $\eta_{j,\tau}(x) = \mathcal{F}^{-1} [a_{j,+}](\tau,x)$, so that 
\[ a_{j,+}(t,\xi) = \int \ft{\eta_{j,\tau}}(\xi) e^{-it\tau} d\tau. \]
Note that the integral in \eqref{eq:avgtowave} can now be written as 
\[ \int e^{it\sqrt{-\Delta}} (\eta_{j,\tau}*f*\psi_j) e^{-it\tau} d\tau.  \]
Therefore, by \eqref{eq:avgtowave},  \eqref{eq:measure}, and Minkowski's inequality,
\begin{align*}
   \| \avg (f*\psi_\delta) \|_{L^{q}(\nu)} &\les 2^{-j\frac{n-1}{2}} \int \| e^{it\sqrt{-\Delta}} (\eta_{j,\tau}*f*\psi_\delta)\|_{L^{q}(\nu)} d\tau \\
   &\less \int \| \eta_{j,\tau}*f \|_{L^p(\R^n)} d\tau \leq  \| f\|_{L^p(\R^n)} \int \| \eta_{j,\tau}\|_{L^1(\R^n)} d\tau.
\end{align*}
To complete the proof, it remains to observe that $\| \eta_{j,\tau} \|_{L^1(\R^n)}\les (1+|\tau|)^{-2}$, 
which is a consequence of the pointwise estimate obtained by integration by parts using \eqref{eq:symbolbound}: 
\[ |\eta_{j,\tau}(x) | \les 2^{jn}(1+2^j|x|)^{-(n+1)}(1+|\tau|)^{-2}.\] 
This finishes the proof of the chain of implications from \eqref{eq:dual1} to \eqref{eq:average_localized}.

Next, we assume that $\alpha>1$. Suppose that the local smoothing estimate \eqref{eqn:smoothing0} is available at the exponent $2n/(n-1)\leq p_n < \infty$, i.e.,
\begin{equation}\label{eqn:smoothing0n}
\delta^{(n-1)/2} \| e^{it\sqrt{-\Delta}} (f*\psi_\delta) \|_{L^{p_n}(\R^{n} \times [1,2])} \less \delta^{n/p_n}  \| f \|_{L^{p_n}(\R^n)}.
\end{equation}
By \Cref{eqn:removemesure} and \eqref{eqn:smoothing0n}, we obtain
\[ \delta^{(n-1)/2} \| e^{it\sqrt{-\Delta}} (f*\psi_\delta) \|_{L^{p_n}(\nu)} \less \delta^{(\alpha-1)/p_n}  \| f \|_{L^{p_n}(\R^n)}. \]
By the implication $\eqref{eq:measure} \implies \eqref{eq:average_localized}$, we have
\begin{equation}\label{eqn:avgfromsmoothing}
   \| \avg(f*\psi_\delta) \|_{L^{p_n}(\nu)} \less  \delta^{(\alpha-1)/p_n}  \| f \|_{L^{p_n}(\R^n)} .
\end{equation}
To show equivalence of these statements, it remains  to verify that $\eqref{eq:average_localized} \implies \eqref{eq:average} \implies \eqref{eq:dual1}$.

$\eqref{eq:average_localized} \implies \eqref{eq:average}$: 
Since $\alpha>1$, an interpolation of \eqref{eq:average_localized} and \eqref{eqn:avgfromsmoothing} gives \eqref{eq:average}.

$\eqref{eq:average} \implies \eqref{eq:dual1}$: 
By summing over dyadic $\delta=2^{-j}$ over $j\geq 0$, \eqref{eq:average} implies 
 \begin{align}
  \| \avg f \|_{L^{\tilde{q}}(\nu)} &\les \| f\|_{L^{\tilde{p}}(\R^n)},  &\text{ for any } \nu \in\mathcal{C}(\alpha).  \label{eq:fullaverage}
 \end{align}
It is enough to prove that \eqref{eq:fullaverage} implies \eqref{eq:dual} since \eqref{eq:dual} implies \eqref{eq:dual1}. By duality, it suffices to show that
\begin{align}\label{eq:dualstep}
\left(\sum_{z\in X} \left(\frac{1}{\delta} \int_{z^\delta} |f|\right)^q  \right)^{1/q} \less A^{1/q}   \| f \|_{L^p} & \text{ for any } X\in\mathcal{X}(\alpha,\delta, A).
\end{align}
Firstly, observe that if $z' \in B_\delta(z)$, then $z^\delta \subset (z')^{3\delta}$. Thus,
\[ \left(\sum_{z\in X} \left(\frac{1}{\delta} \int_{z^\delta} |f|\right)^q  \right)^{1/q} \les \left(\sum_{z\in X} \frac{1}{|B_\delta(z)|} \int_{B_\delta(z)} \left( \frac{1}{\delta}\int_{z'^{3\delta}} |f| \right)^{q}  dz' \right)^{1/q} . \]
Let $\varphi_\delta(x) = \delta^{-n} \varphi(x/\delta)$ for an integrable function $\varphi\geq 1_{B_{10}}$. Then we have $\delta^{-1} \int_{z'^{3\delta}} |f|  \les \avg(|f|*\varphi_\delta)(z')$ (c.f. \eqref{eqn:conv}), which implies that
\begin{equation}\label{eq:du}
 \left(\sum_{z\in X} \left(\frac{1}{\delta} \int_{z^\delta} |f|\right)^q  \right)^{1/q}\les \delta^{-(n+1)/q} \| \avg(|f|*\varphi_\delta) \|_{L^q(X^\delta)}. 
\end{equation} 

We claim that, for a sufficiently small absolute constant $c_0>0$, the measure $\nu$ defined by 
\[ d\nu = c_0 (A\delta^{n+1})^{-1} 1_{X^\delta}(x,t) \, dx\, dt \]
belongs to $\mathcal{C}(\alpha)$. Indeed, for $\delta<r\leq 1$, we have
\begin{align*}
\int_{B_r} (A\delta^{n+1})^{-1} 1_{X^\delta} \les (A\delta^{n+1})^{-1} \delta^{n+1} \#(X \cap B_{r+\delta}) \les (r+\delta)^{\alpha} \les r^\alpha.
\end{align*}
For $0<r<\delta$, 
\[ \int_{B_r} (A\delta^{n+1})^{-1} 1_{X^\delta} \les (A\delta^{n+1})^{-1} r^{n+1} \les r^\alpha (r/ \delta)^{n+1-\alpha} \leq r^\alpha.\]
For the last inequality, we used the fact that for any non-empty $X$, there exists a ball $B_\delta$ such that $1 \leq \#(X\cap B_\delta)  \leq A \delta^{\alpha}$. 

We apply \eqref{eq:fullaverage} with the measure $\nu$, which yields 
\[ \| \avg(|f|*\varphi_\delta) \|_{L^{\tilde q}(X^\delta)} \les (A\delta^{n+1})^{1/{\tilde q}}  \| |f|*\varphi_\delta \|_{L^{\tilde p}} \les (A\delta^{n+1})^{1/{\tilde q}}  \| f \|_{L^{\tilde p}}. \]
This inequality implies, by \eqref{eq:du},
\begin{equation}\label{eq:du1}
 \left(\sum_{z\in X} \left(\frac{1}{\delta} \int_{z^\delta} |f|\right)^{\tilde{q}}  \right)^{1/\tilde{q}} \les A^{1/\tilde{q}}  \| f \|_{L^{\tilde p}}
\end{equation} 
giving, in view of the reduction from \eqref{eq:dualstep}, the desired inequality \eqref{eq:dual} for the pair of exponents $(\tilde{p},\tilde{q})$ rather than $(p,q)$. However, this defect can be remedied by an interpolation argument similar to the one used for the implication $\eqref{eq:dual1} \implies \eqref{eq:dual}$, since we may take $(\tilde{p},\tilde{q})$ arbitrarily close to $(p,q)$ and we allow a loss of $\delta^{-\epsilon}$.
\end{proof}

\section{Lower bounds}\label{sec:LowerBound}

\subsection{Lower bounds for $\NM$}

In this section, we prove \Cref{theorem:NM-upper-bounds}\ref{theorem:NM-upper-bounds-item:sharpness}.

\begin{proposition} \label{prop:NDeltaLowerBd}
Let $1 \leq p \leq \infty$ and $0<\delta<1$. Then
\begin{align*}
\| \NM \|_{L^p \to L^p} 
&\ges 
\max(
\delta^{1 - \frac{2}{p}}
,
1
)
&(n=2)
\\
\| \NM \|_{L^p \to L^p} &\ges 
\max(
\delta^{\frac{3}{2}-\frac{5}{2p}} 
,
\delta^{\frac{1}{2}-\frac{1}{p}}
,
1)
&(n=3)
\\
\| \NM \|_{L^p \to L^p} &\ges 
\max(
\delta^{2-\frac{3}{p}},
\delta^{\frac{1}{2}-\frac{1}{p}}, 
1
)
&(n\geq 4)
\end{align*}
\end{proposition}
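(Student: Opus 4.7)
The plan is to establish each claimed lower bound by exhibiting an explicit test function $f = \mathbf{1}_E$ for a concrete set $E$ and computing the ratio $\|\NM f\|_p / \|f\|_p$ directly. The trivial bound $\gtrsim 1$ is immediate: with $f = \mathbf{1}_{B_{10}(0)}$, every unit sphere through $x \in B_2(0)$ lies inside $B_{10}(0)$, so $\NM f \equiv 1$ on $B_2(0)$.

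For the $n=2$ bound $\delta^{1-2/p}$, I take $f = \mathbf{1}_{B_\delta(0)}$, so $\|f\|_p \sim \delta^{2/p}$; for every $x$ with $|x|\le 2$, the unit circle through both $x$ and the origin passes through $B_\delta(0)$, giving $|S^\delta \cap B_\delta(0)| \gtrsim \delta^2$ and hence $\NM f(x) \gtrsim \delta$ on a set of measure $\sim 1$. For the tube bound $\delta^{3/2-5/(2p)}$ when $n = 3$, expand the equations $|x - a_i|^2 = 1$ of three unit spheres centered at equally spaced points $a_i$ on the unit circle in $\{x_3 = 0\}$ near their common zero (the origin); the three constraints $|a_i \cdot x - |x|^2/2|\lesssim \delta$ confine $(x_1, x_2)$ to a $\delta \times \delta$ box and force $|x_3| \lesssim \delta^{1/2}$, so the triple intersection contains a tube $T$ of dimensions $\delta \times \delta \times \delta^{1/2}$. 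Direct verification shows $T \subset S_p^{O(\delta)}$ for every unit sphere $S_p$ centered on the unit circle; taking $f = \mathbf{1}_T$ then gives $\NM f(x) \gtrsim |T|/|S^\delta| = \delta^{3/2}$ on the solid torus $\bigcup_p S_p$, which has measure $\sim 1$. The higher-dimensional bound $\delta^{2-3/p}$ for $n \geq 4$ is handled analogously via the identity $|p-q|^2 = \tfrac{1}{2} + \tfrac{1}{2} = 1$ for $p \in C := \tfrac{1}{\sqrt{2}}\mathbb{S}^1 \times \{0\}^{n-2}$ and $q \in \Sigma := \{0\}^2 \times \tfrac{1}{\sqrt{2}}\mathbb{S}^{n-3}$: every unit sphere centered on $C$ contains $\Sigma$, so $f = \mathbf{1}_{\Sigma^\delta}$ (mass $\sim \delta^{3/p}$) yields $\NM f \gtrsim \delta^3/\delta = \delta^2$ on $\bigcup_{p \in C} S_p$, a set of measure $\sim 1$.

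The most delicate case is the ``belt'' bound $\delta^{1/2 - 1/p}$ for $n \geq 3$. I plan to take $S_0$ the unit sphere at the origin and $A := S_0^\delta \cap \{|x_n| \leq C\delta^{1/2}\}$ (for a suitable absolute constant $C$), the equatorial band of volume $\sim \delta^{3/2}$. With $f = \mathbf{1}_A$ the naive choice $S = S_0$ already gives $\NM f(x) \geq |A|/|S_0^\delta| = \delta^{1/2}$ on all of $S_0$, but thickening only yields the weaker ratio $\delta^{1/2 - 1/(2p)}$. To obtain the claimed $\delta^{1/2 - 1/p}$ I will enlarge the ``good'' set to measure $\sim \delta^{1/2}$: for each $x$ at radial distance $r \in [0,\delta^{1/2}]$ from $S_0$ with $|x_n| \gtrsim r/\delta^{1/2}$, choose a unit sphere $S$ through $x$ whose center is $c_S = t\,e_n$ on the $x_n$-axis (a valid $t$ with $|t|\leq \delta^{1/2}$ is given explicitly by $t = x_n - \operatorname{sgn}(x_n)\sqrt{x_n^2 - (|x|^2-1)}$). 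Because $c_S - 0$ is parallel to $e_n$, the perpendicular bisector hyperplane of $c_0c_S$ sits at $\{x_n = t/2\}$, so the $(n-2)$-sphere $S \cap S_0$ together with its surrounding tube $S^\delta \cap S_0^\delta$ all lie inside $\{|x_n|\leq C\delta^{1/2}\}$, hence inside $A$. The two-sphere intersection bound \Cref{lem:intersection} then yields $|S^\delta \cap A| \gtrsim \delta^2/(|t|+\delta) \gtrsim \delta^{3/2}$, so $\NM f(x) \gtrsim \delta^{1/2}$. A short integration shows that the set of admissible $x$ is approximately the Minkowski sum of $S_0$ with the segment $[-\delta^{1/2},\delta^{1/2}]e_n$, whose volume is $\sim \delta^{1/2}$. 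The main obstacle is verifying that the tube $S^\delta \cap S_0^\delta$ really does lie inside $\{|x_n|\leq C\delta^{1/2}\}$: one needs that the tube's $x_n$-extent around the $(n-2)$-sphere (at height $t/2$) is $O(\delta/|t|)$, which is $\lesssim \delta^{1/2}$ for $|t| \gtrsim \delta^{1/2}$ but needs more careful bookkeeping near the equator, where $x_n$ is small and $|t|$ must be chosen small to compensate.
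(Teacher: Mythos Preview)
Your examples are exactly the ones the paper uses (up to relabeling axes), and your overall strategy of feeding an explicit $f = \mathbf{1}_E$ into $\NM$ matches the paper's framework. The paper packages this uniformly: given a ``test set'' $E$ and a ``set of centers'' $Z$, it checks the single condition $|E^\delta \cap S^\delta(z)| \gtrsim |E^\delta|$ for all $z \in Z^\delta$, from which $\NM 1_{E^\delta} \gtrsim \delta^{-1}|E^\delta|$ on all of $Z^\delta + \S^{n-1}$, and then reads off the exponent from $|E^\delta|$ and $|Z^\delta + \S^{n-1}|$.

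Your belt argument is the one place you make things harder than necessary. You try to verify $S^\delta(te_n) \cap S_0^\delta \subset A$, which (as you correctly note) requires the $x_n$-width of the lens to be $O(\delta^{1/2})$, and this genuinely fails for small $|t|$. The much easier route---and the one the paper takes---is to reverse the containment: just check $A \subset S^{O(\delta)}(te_n)$ for every $|t| \le \delta^{1/2}$. This is a one-line computation: for $x \in A$ one has $\bigl||x-te_n|^2 - 1\bigr| \le \bigl||x|^2-1\bigr| + 2|t|\,|x_n| + t^2 = O(\delta)$. Then $|S^{O(\delta)}(te_n) \cap A| = |A| \sim \delta^{3/2}$ directly, so $\NM 1_A \gtrsim \delta^{1/2}$ on all of $[-\delta^{1/2},\delta^{1/2}]e_n + \S^{n-1}$, a set of measure $\sim \delta^{1/2}$, and the bookkeeping you flag simply evaporates. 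If you prefer to keep your lens-containment approach, you can also sidestep the obstacle by restricting to $|t| \in [\tfrac12\delta^{1/2}, \delta^{1/2}]$: the good set $\{te_n : |t| \sim \delta^{1/2}\} + \S^{n-1}$ still has measure $\sim \delta^{1/2}$, and in that range your containment $S^\delta(te_n) \cap S_0^\delta \subset A$ does hold.
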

\begin{proof}
To obtain each lower bound in \Cref{prop:NDeltaLowerBd}, we will specify a pair of sets $E, Z \subset \R^n$ satisfying
\begin{align}
\label{eq:EZ-good}
\text{for all } z \in Z^\delta,
\qquad
|E^{\delta} \cap S^\delta(z)| \gtrsim |E^\delta|.
\end{align}
We view $E$ as the ``test set'' and $Z$ as the ``set of centers.''

Suppose we have sets $E$ and $Z$ satisfying \eqref{eq:EZ-good}. Then $\NM 1_{E^\delta}(x) \gtrsim \delta^{-1}|E^\delta|$ for all $x \in Z^\delta+\S^{n-1}$, so
\[
\|\NM \|_{L^p\to L^p}
\geq
\frac{\|\NM 1_{E^\delta} \|_p}{\|1_{E^\delta}\|_p}
\gtrsim
\delta^{-1} |E^\delta|^{1-1/p} |Z^\delta+\S^{n-1}|^{1/p}.
\]
Thus, 
\begin{align}
\label{eq:alpha-beta-gamma-def}
\text{if $|E^\delta| \gtrsim \delta^{\alpha}$ and $|Z^\delta+\S^{n-1}| \gtrsim \delta^{\beta}$, then $\|\NM \|_{L^p\to L^p} \gtrsim \delta^\gamma$},
\end{align}
where $\gamma = (\alpha-1) - \frac{\alpha-\beta}{p}$.

The sets are given in \Cref{figure:examples-NM}. 
\end{proof}
\begin{figure}[h]
\begin{align*}
\renewcommand{\arraystretch}{1.7}
\begin{array}{|l|c|c|c|c|c|c|}
\hline
\text{name} & n \text{ (dim)} & E \text{ (test set)} & Z \text{ (set of centers)} & \alpha & \beta & \gamma  
\\
\hline
\hline
\text{sphere}
& \geq 2
& \S^{n-1}
& \{0\}^{n}
& 1
& 1
& 0
\\
\hline
\text{$\delta$-ball}
& 2
& \{(0,0)\}
& \S^1
& 2
& 0
& 1 - \frac{2}{p}
\\
\hline
\text{tube} 
& 3 
& \{(0,0)\} \times [0, \sqrt{\delta}]
& \S^{1} \times [0, \sqrt{\delta}]
& \frac{5}{2}
& 0
& \frac{3}{2} - \frac{5}{2p}
\\
\hline
\text{cylindrical shell}
& \geq 3
& [0, \sqrt{\delta}] \times \S^{n-2}
&  [0, \sqrt{\delta}] \times \{0\}^{n-1}
& \frac{3}{2}
& \frac{1}{2}
& \frac{1}{2}-\frac{1}{p}
\\
\hline
\text{radius $1/\sqrt{2}$}
& \geq 4
& \{(0,0)\} \times \frac{1}{\sqrt{2}} \S^{n-3}
& \frac{1}{\sqrt{2}} \S^{1} \times \{0\}^{n-2}
& 3
& 0
& 2-\frac{3}{p}
\\
\hline
\end{array}
\end{align*}
\caption{Table of examples for $\NM$. The sets $E$ and $Z$ satisfy \eqref{eq:EZ-good}. The quantities $\alpha, \beta, \gamma$ are defined as in \eqref{eq:alpha-beta-gamma-def}. (See \Cref{figure:cyl-shell-rad-1/sqrt2} for some pictures.)}
\label{figure:examples-NM}
\end{figure}

\begin{figure}[h!]
\centering
	\begin{subfigure}[b]{0.4\textwidth} \centering
\includegraphics[width=0.8\textwidth]{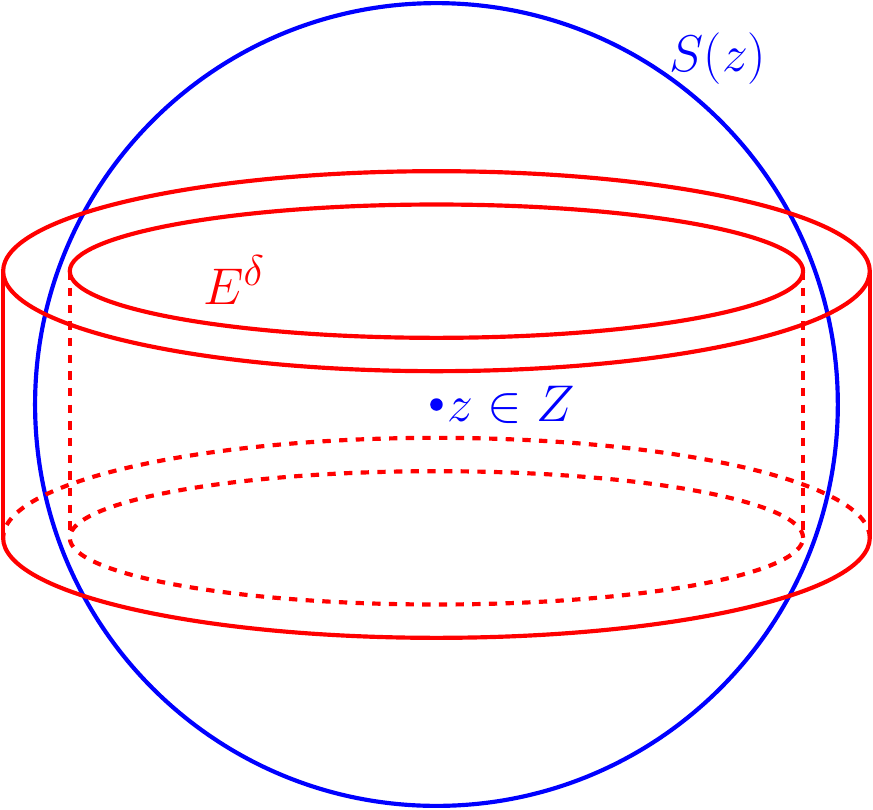}
\caption{Cylindrical shell example.}
	\end{subfigure}
\qquad
	\begin{subfigure}[b]{0.4\textwidth} \centering
\includegraphics[width=0.8\textwidth]{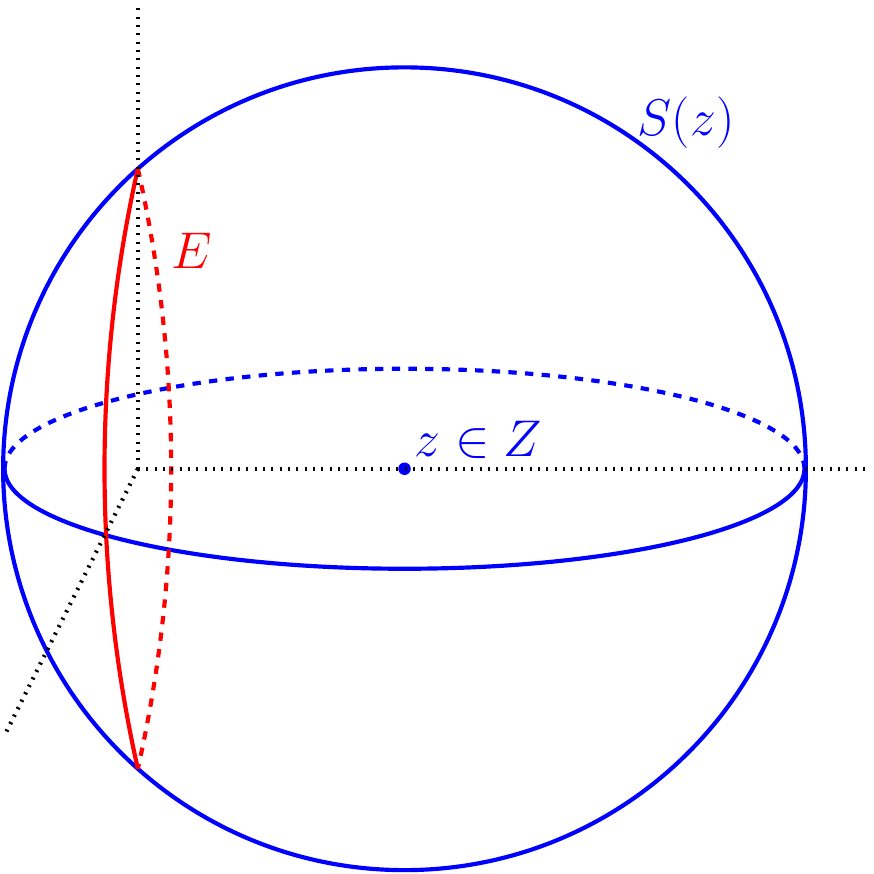}
\caption{Radius $1/\sqrt{2}$ example. (The horizontal axis represents $\R^2$, and the two remaining axes together represent $\R^{n-2}$).}
	\end{subfigure}
  \caption{The last two examples from \Cref{figure:examples-NM}.}
  \label{figure:cyl-shell-rad-1/sqrt2}
\end{figure}
\begin{remark}
\label{remark:lenz}
When $n=4$, the ``radius $1/\sqrt{2}$'' example is also known as the \emph{Lenz construction} in the context of the Erd\H{o}s unit distance problem. (See, e.g., \cite[Section 5.2]{BrassMoserPach}.) This type of example also appears in \cite[Proposition 2.1]{Kol_Wol}.
\end{remark}

\subsection{Lower bounds  for $\NS$}

In this section, we prove \Cref{theorem:NS-upper-bounds}\ref{theorem:NS-upper-bounds-item:sharpness}.

\begin{proposition} 
\label{prop:NSLowerBd}
Let $p \in [1,\infty]$ and $0<\delta<1$. Then \begin{align*}
\| \NS \|_{L^p \to L^p} &\ges \max(\delta^{\frac{1}{2} - \frac{3}{2p}},1) &(n=2)
\\
\| \NS \|_{L^p \to L^p} &\ges \max(\delta^{1-\frac{2}{p}},1) &(n\geq 3)
\end{align*}
\end{proposition}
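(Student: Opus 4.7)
The plan is to exhibit, for each $p$, an explicit test function $f = 1_{E^\delta}$ for a suitable set $E \subset \R^n$ on which the operator $\NS$ behaves badly, exactly paralleling the strategy used for \Cref{prop:NDeltaLowerBd} and \Cref{figure:examples-NM}. The trivial lower bound $\|\NS\|_{L^p \to L^p} \ges 1$ is immediate from $f = 1_{B_R}$ for a large ball $B_R$. For the nontrivial bounds, the general scheme is to find a \emph{good set} $G \subset \R^n$ such that for every $x \in G$ there exists a sphere $S(c_x, t_x)$ through $x$ with $t_x \in [1,2]$ and
\[
|S^{O(\delta)}(c_x, t_x) \cap E^\delta| \ges |E^\delta|.
\]
This yields $\NS 1_{E^\delta}(x) \ges |E^\delta|/\delta$ on $G$, whence $\|\NS\|_{L^p \to L^p} \ges \delta^{\alpha - 1 - (\alpha-\beta)/p}$ when $|E^\delta| \sim \delta^\alpha$ and $|G| \sim \delta^\beta$. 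The key point, in contrast to the $\NM$ case, is that the additional supremum over $t \in [1,2]$ lets us drop $\alpha$ from $n$ to $2$ (resp.\ to $3/2$) by allowing many spheres of different radii to cover $E$ well.

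For $n \geq 3$, the target $\gamma = 1 - 2/p$ is achieved by $\alpha = 2$, $\beta = 0$. I take $E$ to be the $(n-2)$-sphere of radius $1/2$ in the hyperplane $\{x_n = 0\}$, namely $E = \{(x_1, \dots, x_{n-1}, 0) : x_1^2 + \cdots + x_{n-1}^2 = 1/4\}$, giving $|E^\delta| \sim \delta^2$. For each $t \in [1, 2]$, the $(n-1)$-sphere of radius $t$ centered at $(0, \dots, 0, \pm\sqrt{t^2 - 1/4})$ contains $E$ exactly, and a tubular-neighborhood computation shows $|E^\delta \cap S^\delta| \sim |E^\delta|$. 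Letting $G$ be the union of these spheres as $t$ ranges over $[1,2]$ and both signs of $x_n$, the parametrization $(t, \omega) \mapsto (0, \dots, 0, \pm\sqrt{t^2-1/4}) + t\omega$ with $\omega \in \S^{n-1}$ has a generically nonvanishing Jacobian, so $|G| \sim 1$.

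For $n = 2$, the target $\gamma = 1/2 - 3/(2p)$ is achieved by $\alpha = 3/2$, $\beta = 0$. I take $E$ to be the arc of $\S^1$ of angular length $\sqrt\delta$ centered at $(1, 0)$, giving $|E^\delta| \sim \delta^{3/2}$. For each $t \in [1, 2]$, let $C_t$ be the circle of radius $t$ centered at $(1 + t, 0)$, externally tangent to $\S^1$ at $(1, 0)$. A short Taylor expansion gives that for $(\cos\theta, \sin\theta) \in \S^1$ the distance to the center of $C_t$ is $t + O(\theta^2)$, hence the arc $E$ (with $|\theta| \lesssim \sqrt\delta$) lies in $C_t^{O(\delta)}$, and $|E^\delta \cap C_t^{O(\delta)}| \ges |E^\delta|$. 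The set $G$ of points on the family $\{C_t\}_{t \in [1,2]}$ is two-dimensional with $|G| \sim 1$, as the Jacobian of the parametrization $(t, \psi) \mapsto (1 + t - t\cos\psi, t\sin\psi)$ equals $t(1 + \cos\psi)$, nonzero away from $\psi = \pi$.

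The main technical points are (i) the Taylor expansion in the $n = 2$ case verifying that tangent circles of radii $t \in [1,2]$ swallow the short arc in their $\delta$-neighborhood---this is the familiar $\delta^{3/2}$ tangency phenomenon for circles, morally the planar version of the ``tube'' example in \Cref{figure:examples-NM}---and (ii) confirming $|E^\delta|$ and $|G|$ via standard tubular-neighborhood or Jacobian calculations. Neither step is expected to pose a real obstacle.
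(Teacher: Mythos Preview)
Your proposal is correct and follows essentially the same strategy as the paper: the paper uses the ``tube'' example $E=\{0\}\times[0,\sqrt\delta]$ with circles tangent to the $y$-axis for $n=2$, and the ``radius $1/\sqrt{2}$'' example $E=\{0\}\times\tfrac{1}{\sqrt2}\S^{n-2}$ with centers on the $x_1$-axis for $n\ge 3$, which are cosmetic variants of your arc-with-externally-tangent-circles and small-sphere-in-a-hyperplane constructions, yielding the same $(\alpha,\beta)=(3/2,0)$ and $(2,0)$. One minor slip: with your parametrization $(1+t-t\cos\psi,\,t\sin\psi)$ the Jacobian is $t(\cos\psi-1)$, vanishing at $\psi=0$ (the common tangent point $(1,0)$), not $t(1+\cos\psi)$; this does not affect the conclusion $|G|\sim 1$.
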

\begin{proof}
The fact that $\| \NS \|_{L^p \to L^p} \gtrsim 1$ for all $p \in [1, \infty]$ follows from \Cref{prop:NDeltaLowerBd} and the fact that $\NS \geq \NM$, so we only need to consider the remaining two bounds  in \Cref{prop:NSLowerBd}.

To obtain each of these two lower bounds, we will specify a pair of sets $E, Z \subset \R^n$ and a function $r : Z \to [1,2]$ satisfying
\begin{align}
\label{eq:EZ-good2}
\text{for all } z \in Z, 
\qquad
|E^{\delta} \cap S^\delta(z,r(z))| \gtrsim |E^\delta|.
\end{align}
This implies that $\NS 1_{E^\delta}(x) \gtrsim \delta^{-1}|E^\delta|$ for all $x \in \bigcup_{z \in Z} S(z,r(z))$, so
\begin{align}
\label{eq:alpha-beta-gamma-def2}
\text{if $|E^\delta| \gtrsim \delta^{\alpha}$ and $\left|\bigcup_{z \in Z} S(z,r(z))\right| \gtrsim \delta^{\beta}$, then $\|\NS \|_{L^p\to L^p} \gtrsim \delta^\gamma$},
\end{align}
where $\gamma = (\alpha-1) - \frac{\alpha-\beta}{p}$. 

The sets are given in \Cref{figure:examples-NS}.
\end{proof}

\begin{figure}[h]
\begin{align*}
\renewcommand{\arraystretch}{1.7}
\begin{array}{|l|c|c|c|c|c|c|c|}
\hline
\text{name} & n \text{ (dim)} & E \text{ (test set)} & Z \text{ (set of centers)} & r(z) & \alpha & \beta & \gamma
\\
\hline
\hline
\text{tube} 
& 2 
& \{0\} \times   [0, \sqrt{\delta}]
& [1,2] \times [0, \sqrt{\delta}]
& z_1
& \frac{3}{2}
& 0
& \frac{1}{2} - \frac{3}{2p}
\\
\hline
\text{radius $1/\sqrt{2}$}
& \geq 3
& \{0\} \times \frac{1}{\sqrt{2}} \S^{n-2}
& [1,\frac{3}{2}] \times \{0\}^{n-1}
& \sqrt{z_1^2 + \frac{1}{2}}
& 2
& 0
& 1-\frac{2}{p}
\\
\hline
\end{array}
\end{align*}
\caption{Table of examples for $\NS$. The sets $E$ and $Z$ satisfy \eqref{eq:EZ-good2}. The quantities $\alpha, \beta, \gamma$ are defined as in \eqref{eq:alpha-beta-gamma-def2}.}
\label{figure:examples-NS}
\end{figure}

\subsection{Lower bounds for $\MT$}\label{sec:lowerboundMT}

In this section we prove \Cref{prop:lowerboundMT}. It is an immediate consequence of the following.
\begin{lemma}\label{lem:lowMT}
Let $n\geq 2$. For each $0\leq s \leq n-1$, let $\Gamma(n,p,s)$ be the set of all $\gamma \in \R$ such that there exists $E\subset \R^n$ and a compact set $T\subset \R^n$ with finite $s$-dimensional upper Minkowski content such that \[\frac{\| \MT 1_{E^\delta} \|_{L^p}}{\| 1_{E^\delta} \|_{L^p}} \ges \delta^{\gamma}\qquad\text{for all } \delta \in (0, 1/2).\] 
Then $\Gamma(n,p,s)$ contains the following numbers:
\begin{align*}
\begin{cases}
(n-1) - \frac{1}{p}(n-1+\min(s,1)), & 0\leq s \leq n-1  \\
  n-\frac{3}{2} - \frac{1}{p}\left(n-\frac{3}{2} +\frac{1}{2}\min(2,s) \right), &  1 < s \leq n-1 \\ 
  n-\ceil{s}+1 - \frac{1}{p}\left( n-2\ceil{s}+s+2 \right) , & 2 \leq s \leq n-1  \\
  n-\floor{s}+1 - \frac{1}{p}\left( n-\floor{s}+2 \right) , & 2 \leq s \leq n-1 \\
    \frac{1}{2} - \frac{1}{p}(2 + s - n), &  n-2 \leq s \leq n-1.
  \end{cases}
\end{align*}
\end{lemma}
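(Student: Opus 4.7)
The approach mirrors the templates used in Propositions~\ref{prop:NDeltaLowerBd} and~\ref{prop:NSLowerBd}: for each of the five candidate exponents $\gamma$, I would exhibit a compact set $T \subset \R^n$ with $N(T, \delta) \lesssim \delta^{-s}$ and a test set $E \subset \R^n$ for which
\[
|E^\delta| \sim \delta^{\alpha}, \qquad |Z| \sim \delta^{\beta}, \qquad \MT 1_{E^\delta}(x) \gtrsim \delta^{\mu} \text{ for all } x \in Z,
\]
with $\gamma = \mu - (\alpha - \beta)/p$ equal to the target. These immediately yield
\[
\frac{\|\MT 1_{E^\delta}\|_{L^p}}{\|1_{E^\delta}\|_{L^p}} \gtrsim \delta^{\mu} \frac{|Z|^{1/p}}{|E^\delta|^{1/p}} \gtrsim \delta^{\gamma}.
\]
For each choice of $E$, one identifies a ``good-center'' set $V \subset \R^n$ consisting of those $v$ for which $v + \S^{n-1}$ carries normalized surface measure $\gtrsim \delta^{\mu}$ inside $E^{\delta}$, and then sets $Z = V - T$ so that each $x \in Z$ has some $u \in T$ with $x + u \in V$.

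For~(i), take $E = \{0\}$: then $\alpha = n$ and $\mu = n - 1$ (standard spherical cap), $V$ is the $O(\delta)$-shell around $\S^{n-1}$, and choosing $T$ either as a $\delta$-separated $\delta^{-s}$-point subset of a fixed curve (when $s \leq 1$) or as a short segment transverse to $\S^{n-1}$ (when $s \geq 1$) yields $|Z| \sim \delta^{1 - \min(s,1)}$, recovering~(i). For~(ii), let $E$ be a geodesic arc of length $\sqrt{\delta}$ on $\S^{n-1}$; then $\alpha = n - 1/2$, $\mu = n - 3/2$, and a two-sphere intersection computation (in the spirit of \Cref{lem:intersection}) identifies $V$ locally as an $(n-2)$-dimensional manifold thickened by $\sqrt{\delta}$ in one transverse direction and by $\delta$ in another, the two thicknesses arising from the $\sim \sqrt\delta$ tangency angle of unit spheres whose centers lie at distance $\sqrt\delta$. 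Taking $T$ of dimension $s$ to populate the $\sqrt\delta$-direction when $1 < s \leq 2$, and both thin directions when $s > 2$, gives $|Z| \sim \delta^{1 - \min(2,s)/2}$. For~(v), take the cylindrical shell $E = [0, \sqrt\delta] \times \S^{n-2}$, so $\alpha = 3/2$ and $\mu = 1/2$ (from the $\sqrt\delta$-band on $\S^{n-1}$ centered at the origin); the axis $[0, \sqrt\delta] \times \{0\}^{n-1}$ lies in $V$, and choosing $T$ transverse to the axis (e.g.\ $T = \S^{n-1}$ at $s = n-1$, $T = \{0\} \times \S^{n-2}$ at $s = n-2$, with suitable interpolations between) gives $|Z| \sim \delta^{n - 1/2 - s}$. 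For~(iii) and~(iv), which coincide at integer $s$, I would take $E$ to be a lower-dimensional equatorial sub-sphere of $\S^{n-1}$ of codimension $\ceil{s} - 1$ (resp.\ $\floor{s} - 1$) cut out by an affine slice; then $|E^\delta|$ and $\mu$ are dictated by the dimension of $E$, while $V$ concentrates on an $O(\sqrt{\delta})$-neighborhood of $0$ of prescribed box shape, and $T$ of $s$-Minkowski content populating the transverse directions of $V$ produces the corresponding $\beta$.

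The main obstacle is the geometric analysis for~(ii)--(iv): pinning down the precise thickened-manifold shape of the good-center set $V$ and then matching it to a set $T$ of $s$-dimensional Minkowski content that enlarges $V - T$ to the claimed $|Z|$ via Minkowski sum. These shapes follow from the two-sphere intersection lemma together with standard distance-to-sphere expansions; once they are identified, both the ratio computation $\gamma = \mu - (\alpha - \beta)/p$ and the Minkowski-content bookkeeping for $T$ reduce to routine volume estimates.
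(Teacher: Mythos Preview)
Your template matches the paper's exactly: choose $T$, a test set $E$, and a set $Z$ of ``good centers'' so that $\MT 1_{E^\delta}\gtrsim \delta^{\alpha-1}$ on $Z-T$, then read off $\gamma=(\alpha-1)-(\alpha-\beta)/p$ with $\beta$ governed by $|Z^\delta-T|$. Cases~(i), (ii), (v) are essentially the paper's $\delta$-ball, tube, and cylindrical-shell examples, and your outlines for those are on target.

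The genuine gap is in cases~(iii)/(iv). Your choice of $E$ as a sub-sphere of $\S^{n-1}$ of codimension $\ceil{s}-1$ (hence dimension $n-\ceil{s}$) gives $\alpha=\ceil{s}$, but the target exponent requires $\alpha-1=n-\ceil{s}+1$, i.e.\ $\alpha=n-\ceil{s}+2$; these agree only when $2\ceil{s}=n+2$. More importantly, if $E$ is an \emph{equatorial} (great) sub-sphere of $\S^{n-1}$, then the only unit sphere containing $E$ is $\S^{n-1}(0)$, so the good-center set $V$ collapses to a single point and $|V^\delta-T|\sim\delta^{n-s}$, far too small. Your description of $V$ as an ``$O(\sqrt\delta)$-neighborhood of $0$'' reflects this collapse. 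The paper instead uses the Lenz/radius-$\tfrac{1}{\sqrt2}$ construction: take
\[
E=\{0\}^{\,n-\ceil{s}+1}\times \tfrac{1}{\sqrt2}\S^{\ceil{s}-2},
\qquad
Z=\tfrac{1}{\sqrt2}\S^{n-\ceil{s}}\times\{0\}^{\ceil{s}-1},
\]
so that \emph{every} unit sphere centered on $Z$ contains $E$ entirely. This makes $Z$ a positive-dimensional sphere (not a shrinking box), yields $\alpha=n-\ceil{s}+2$, and with $T=\{0\}^{n-\ceil{s}}\times C_s$ (a Cantor set of dimension $s$ in the last $\ceil{s}$ coordinates) gives $|Z^\delta-T|\gtrsim\delta^{\ceil{s}-s}$, i.e.\ $\beta=\ceil{s}-s$, exactly what is needed. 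For case~(iv) the paper does not build a separate example at all: it uses the monotonicity $\Gamma(n,p,s)\supset\Gamma(n,p,\floor s)$ and reads~(iv) off from~(iii) at the integer $\floor s$.
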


\begin{proof}[Proof of \Cref{lem:lowMT}] 
We choose $T$ as follows:
\begin{align}
\label{eq:lowMT-def-T}
T =
\{0\}^{n - \ceil{s}}
\times 
C_s
\end{align}
where $C_s \subset \R^{\ceil{s}}$ is a self-similar $s$-dimensional Cantor-type set; more precisely, let $C_1 = [-\frac{1}{2},\frac{1}{2}]$, and for $0 < d < 1$, let $C_{d} \subset [-\frac{1}{2},\frac{1}{2}]$ be the standard symmetric Cantor set with Minkowski (and Hausdorff) dimension equal to $d$. (See, e.g., \cite[Section 4.10]{mattila95}.) Then for $s > 1$, let $C_s = (C_{s/\ceil{s}})^{\ceil{s}}$.

For four of the five cases in \Cref{lem:lowMT}, we will specify a pair of sets $E, Z \subset \R^n$ satisfying
\begin{align}
\label{eq:EZ-good-NT}
\text{for all } z \in Z^\delta,
\qquad
\cH^{n-1}(E^{\delta} \cap \S^{n-1}(z)) \gtrsim \delta^{-1}|E^\delta|.
\end{align}
This implies $\MT 1_{E^\delta}(x) \gtrsim \delta^{-1}|E^\delta|$ for all $x \in Z^\delta-T$, so
\begin{align}
\label{eq:alpha-beta-gamma-def-NT}
\text{if $|E^\delta| \gtrsim \delta^{\alpha}$ and $|Z^\delta-T| \gtrsim \delta^{\beta}$, then $\frac{\|\MT 1_{E^\delta} \|_p}{\|1_{E^\delta}\|_p}
 \gtrsim \delta^\gamma$},
\end{align}
where $\gamma = (\alpha-1) - \frac{\alpha-\beta}{p}$.

The sets are given in \Cref{figure:examples-NT}. These correspond to the first, second, third, and fifth cases of \Cref{lem:lowMT}, in that order. For the fourth case, we use monotonicity of $\Gamma(n,p,s)$ in $s$: since finite $s$-dimensional upper Minkowski content implies finite $s'$-dimensional Minkowski content for all $s' > s$, we have $\Gamma(n,p,s) \supset \Gamma(n,p,\floor{s}) \ni n-\floor{s}+1 - \frac{1}{p}( n-\floor{s}+2)$.
\end{proof}

\begin{figure}[h]
\footnotesize
\begin{align*}
\renewcommand{\arraystretch}{1.7}
\begin{array}{|l|c|c|c|c|c|}
\hline
\text{name} & \text{range of } s & E \text{ (test set)} & Z \text{ (set of centers)} & \alpha & \beta  
\\
\hline
\hline
\text{$\delta$-ball}
&[0, n-1]
& \{0\}^n
& \S^{n-1}
& n
& 1-\min(1,s)
\\
\hline
\text{tube} 
& [1, n-1]
& \{0\}^{n-1} \times  [0, \sqrt{\delta}] 
& \S^{n-2} \times [0, \sqrt{\delta}] 
& n-\frac12
& 1-\frac{1}{2} \min(2,s)
\\
\hline
\text{rad.~$1/\sqrt{2}$}
& [2, n-1]
&  \{0\}^{n-\lceil s\rceil+1} \times \frac{1}{\sqrt{2}} \S^{\lceil s \rceil - 2}
& \frac{1}{\sqrt{2}} \S^{n-\lceil s \rceil}   \times \{0\}^{\lceil s \rceil-1}
& n-\lceil s \rceil+2
& \lceil s\rceil - s
\\
\hline
\text{cyl.~shell}
& [n-2, n-1]
&  [0, \sqrt{\delta}] \times \S^{n-2}    
&  [0, \sqrt{\delta}] \times  \{0\}^{n-1}
& \frac32
& n - \frac{1}{2} - s
\\
\hline
\end{array}
\end{align*}
\caption{Table of examples for $\MT$. The sets $E$ and $Z$ satisfy \eqref{eq:EZ-good-NT}. The quantities $\alpha, \beta$ are defined as in \eqref{eq:alpha-beta-gamma-def-NT}, and $c > 0$ is a small absolute constant.}
\label{figure:examples-MT}
\label{figure:examples-NT}
\end{figure}

\subsection{Lower bounds for $\ST$}
\label{section:ST-lower}

In this section, we prove \Cref{cor:lower}. It is an immediate consequence of the following.

\begin{lemma}\label{lem:lowST}
Let $n\geq 2$. For each $0\leq s \leq n-1$, let $\Gamma(n,p,s)$ be the set of all $\gamma \in \R$ such that there exists $E\subset \R^n$ and a compact set $T\subset \R^n$ with finite $s$-dimensional upper Minkowski content such that \[\frac{\| \ST 1_{E^\delta} \|_{L^p}}{\| 1_{E^\delta} \|_{L^p}} \ges \delta^{\gamma}\qquad\text{for all } \delta \in (0, 1/2).\] 
Then $\Gamma(n,p,s)$ contains the following:
\begin{align*}
\begin{cases}
(n-\frac{\ceil{s}}{2}-1) - \frac{1}{p} (n-\ceil{s}+\frac{s}{2}) & s \leq  2
\\
(n-\frac{\floor{s}}{2}-1) - \frac{1}{p} (n-\frac{\floor{s}}{2})  & s \leq  2
\\
(n-\ceil{s}) - \frac{1}{p} (n+s-2\ceil{s}+1)
& s \geq 2
\\
(n-\floor{s}) - \frac{1}{p} (n-\floor{s}+1)
& s \geq 2.
\end{cases}
\end{align*}
\end{lemma}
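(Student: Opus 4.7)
The plan mirrors the proof of \Cref{lem:lowMT}: for each of the four claimed values of $\gamma$, I would exhibit explicit $E \subset \R^n$ (the ``test set''), a compact $T \subset \R^n$ satisfying \eqref{eqn:coverassume}, an auxiliary set $Z \subset \R^n$ of ``sphere centers,'' and a radius function $r : Z \to [1, 2]$ such that
\[
  \cH^{n-1}\bigl(E^\delta \cap S^{n-1}(z, r(z))\bigr) \gtrsim \delta^{-1}|E^\delta|
  \qquad\text{for every } z \in Z.
\]
This forces $\ST 1_{E^\delta}(x) \gtrsim \delta^{-1}|E^\delta|$ on $W^\delta$, where $W := \{z - r(z)u : z \in Z,\, u \in T\}$. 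The remainder of the argument is identical to the $\MT$ case: if $|E^\delta| \gtrsim \delta^\alpha$ and $|W^\delta| \gtrsim \delta^\beta$, then $\|\ST 1_{E^\delta}\|_{L^p}/\|1_{E^\delta}\|_{L^p} \gtrsim \delta^{(\alpha-1) - (\alpha-\beta)/p}$, so it suffices to exhibit constructions with the required $\alpha, \beta$.

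Four constructions cover the four bounds, after using monotonicity $\Gamma(n,p,s) \supset \Gamma(n,p,\floor{s})$ to reduce bounds~2 and~4 to the case of integer $s$, exactly as in \Cref{lem:lowMT}. The first two constructions are variants of a ``tangent sphere to a flat patch'': set $E = \{0\}^{n-b} \times [0,\sqrt\delta]^b$ and $T = \{0\}^{n-b} \times T_0$, where $T_0 = [-1/2,1/2]^b$ for bound~2 (with $b = \floor{s}$) and $T_0 = C_s \subset \R^{\ceil{s}}$ is the standard $s$-dimensional Cantor set for bound~1 (with $b = \ceil{s}$). The centers are $z = (c, a)$ with $|c| = t \in [1,2]$ and $a \in [0,\sqrt\delta]^b$, and $r(z) = t$; for such $z$, the sphere $S^{n-1}(z, t)$ is tangent to $E$ along its full extent, and a routine Taylor expansion near the tangent point (essentially contained in \Cref{section:volumebound}) yields the required intersection lower bound. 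Then $\alpha = n - b/2$, and $W$ factors as an $(n-b-1)$-sphere in $\R^{n-b}$ (of $O(1)$ measure) times $\{a - tu : a \in [0,\sqrt\delta]^b, u \in T_0\}$ in $\R^b$; the $\delta$-neighborhood of the second factor has measure $\approx 1$ for $b = \floor{s}$ (bound~2, $\beta = 0$), and $\approx \delta^{(\ceil{s}-s)/2}$ for $b = \ceil{s}$ (bound~1), in which case the exponent comes from $\sqrt\delta$-thickening $C_s$ by the slab, together with the standard fact that the $\epsilon$-neighborhood of an $s$-dimensional self-similar Cantor set in $\R^{\ceil{s}}$ has measure $\approx \epsilon^{\ceil{s}-s}$.

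The remaining two constructions are variants of ``concentric spheres'': set $E = \{0\}^{n-b} \times \tfrac{1}{\sqrt 2}\, S^{b-1}$ with $T_0$ as in the respective sub-case above, take $Z = \{(z_1, 0) \in \R^{n-b} \times \R^b : \sqrt{1/2} \leq |z_1| \leq \sqrt{7/2}\}$, and set $r(z) = \sqrt{|z_1|^2 + 1/2} \in [1, 2]$. The sphere $S^{n-1}(z, r(z))$ intersected with the affine subspace $\{0\}^{n-b} \times \R^b$ is exactly $E$, so the required intersection bound is immediate. Here $\alpha = n - b + 1$; the sweep in the last $b$ coordinates is $\{-r(z_1)u : u \in T_0\}$, whose $\delta$-neighborhood has measure $\approx 1$ for $b = \floor{s}$ (bound~4, $\beta = 0$) and $\approx \delta^{\ceil{s}-s}$ for $b = \ceil{s}$ (bound~3, $\beta = \ceil{s}-s$).

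The only non-trivial analytic input is the tangent-sphere intersection estimate, which is routine. The main obstacle is writing the four constructions consistently, keeping track of which coordinates carry the patch, which carry the Cantor factor, and which carry the sphere of centers, and verifying that the resulting $\alpha, \beta$ match the (somewhat intricate) formulas involving $\ceil{s}$ and $\floor{s}$ in the lemma statement.
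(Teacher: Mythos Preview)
Your proposal is correct and follows essentially the same approach as the paper's proof: the same choice of $T = \{0\}^{n-\ceil{s}} \times C_s$, the same two basic constructions (``tube'' and ``radius $1/\sqrt 2$'') with the same values of $\alpha$ and $\beta$, and the same monotonicity argument $\Gamma(n,p,s) \supset \Gamma(n,p,\floor{s})$ to handle the second and fourth bounds. One minor slip: in the tube case you describe the first factor of $W$ as an ``$(n-b-1)$-sphere'' of $O(1)$ measure, but since $|c|=t$ ranges over $[1,2]$ this factor is actually the full annulus $\{1\le |c|\le 2\}\subset\R^{n-b}$; this does not affect your computation of $\beta$.
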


\begin{proof}
We take $T$ as in \eqref{eq:lowMT-def-T}. For each of the five cases in \Cref{lem:lowST}, we will specify a pair of sets $E, Z \subset \R^n$ and a function $r : Z \to [1,2]$ satisfying
\begin{align}
\label{eq:EZ-good-ST}
\text{for all } z \in Z,
\qquad
\cH^{n-1}(E^{\delta} \cap \S^{n-1}(z,r(z))) \gtrsim \delta^{-1}|E^\delta|.
\end{align}
This implies $\ST 1_{E^\delta}(x) \gtrsim \delta^{-1}|E^\delta|$ for all $x \in \bigcup_{z \in Z} (z-r(z)T)$, so
\begin{align}
\label{eq:alpha-beta-gamma-def-ST}
\text{if $|E^\delta| \gtrsim \delta^{\alpha}$ and $\left|\bigcup_{z \in Z} (z-r(z)T)\right| \gtrsim \delta^{\beta}$, then $\frac{\|\ST 1_{E^\delta} \|_p}{\|1_{E^\delta}\|_p}
 \gtrsim \delta^\gamma$},
\end{align}
where $\gamma = (\alpha-1) - \frac{\alpha-\beta}{p}$.

The sets are given in \Cref{figure:examples-ST}. The calculation of $\beta$ here is less straightforward than in the other proofs in \Cref{sec:LowerBound} so far, so we provide some details for the ``tube'' example. We have
\[
z-r(z)T = (z_I, z_{II}) - |z_I| \{0\}^{n-\ceil{s}}
 \times C_s
=
\{z_I\} \times (z_{II} - |z_I| C_s)
\]
so
\begin{align*}
\bigcup_{z \in Z}
(z-r(z)T)
=
\bigcup_{1 \leq |z_I| \leq 2}
\{z_I\} \times
\left(
\bigcup_{|z_{II}| \leq \delta^{1/2}}
 (z_{II} - |z_I| C_s)
\right)
=
\bigcup_{1 \leq |z_I| \leq 2}
\{z_I\} \times
\left(
B_{\delta^{1/2}}^{\ceil{s}} - |z_I| C_s
\right).
\end{align*}
For each $z_I$, the set $B_{\delta^{1/2}}^{\ceil{s}} - |z_I| C_s$ is the $\delta^{1/2}$-neighborhood of a dilate of $C_s$, and thus has ($\ceil{s}$-dimensional) Lebesgue measure $\approx \delta^{(\ceil{s} - s)/2}$. Thus by Fubini, $|\bigcup_{z \in Z} (z-r(z)T)| \approx \delta^{(\ceil{s}-s)/2}$. The calculation of $\beta$ for the radius $1/\sqrt{2}$ example is similar.

For the second and fourth terms, we use monotonicity of $\Gamma$, i.e., $\Gamma(n,p,s) \supset \Gamma(n,p,\floor{s})$.
\end{proof}

\begin{figure}[h]
\footnotesize
\begin{align*}
\renewcommand{\arraystretch}{1.7}
\begin{array}{|l|c|c|c|c|c|c|}
\hline
\text{name} & \text{range of } s & E \text{ (test set)} & Z \text{ (set of centers)} & r(z) & \alpha & \beta  
\\
\hline
\hline
\text{``tube''}
& s\leq 2
&  \{0\}^{n-\ceil{s}} \times B_{\delta^{1/2}}^{\ceil{s}}
&  (B_2^{n-\ceil{s}} \setminus B_1^{n-\ceil{s}})  \times B_{\delta^{1/2}}^{\ceil{s}}
& |z_I|
& n - \frac{\ceil{s}}{2}
& \frac{\ceil{s}-s}{2}
\\
\hline
\text{rad.~$1/\sqrt{2}$}
& s\geq 2
& 
\{0\}^{n-\ceil{s}}
\times \frac{1}{\sqrt{2}} \S^{\ceil{s}-1}
&  
(B_{3/2}^{n-\ceil{s}} \setminus B_1^{n-\ceil{s}})  \times B_\delta^{\ceil{s}}
& \sqrt{|z_I|^2 + \frac{1}{2}}
& n - \ceil{s} + 1
& \ceil{s}-s
\\
\hline
\end{array}
\end{align*}
\caption{Table of examples for $\ST$. The sets $E$ and $Z$ satisfy \eqref{eq:EZ-good-ST}. The quantities $\alpha, \beta$ are defined as in \eqref{eq:alpha-beta-gamma-def-ST}. In both rows, $Z$ is a set of the form $Z_I \times Z_{II}$, and $z_I$ refers to the components in $Z_I$.}
\label{figure:examples-ST}
\end{figure}

\appendix
\section{The Kakeya needle problem for $\mathbb{S}^{n-1}$}\label{section:KakeyaSet}

Here, we sketch a proof of \Cref{theorem:nikodym-translations-sphere}. We begin with the following.

\begin{theorem}[Kakeya needle problem for spheres]
	\label{theorem:kakeya-translations-sphere}
	Let $\epsilon > 0$ be arbitrary. Then between the origin and any prescribed point in $\R^n$, there exists a polygonal path $P = \bigcup_{i=1}^m L_i$ with each $L_i$ a line segment, and for each $i$ there exists an $(n-1)$-plane $V_i $ containing $ 0$, such that
	\begin{equation}
	\label{eq:theorem-translations}
	\Big|\bigcup_{i}\bigcup_{p\in L_i} (p+\{x\in \S^{n-1}:\, \dist(x, V_i) > \epsilon\})\Big|< \epsilon.
	\end{equation}
\end{theorem}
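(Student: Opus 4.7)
The plan is to adapt the planar construction from \cite[Theorem 6.9]{CC2019} to dimension $n$, keeping the same two-layer architecture: a Pál-join-type building block together with an iterative refinement. The building block will be phrased as follows. Given $\eta > 0$ and two parallel line segments $L_1, L_2 \subset \R^n$ of common direction $e$ with a small perpendicular offset, I would construct a polygonal transition path between them, together with a choice of plane $V$ containing $0$ for each constituent segment (including $L_1$ and $L_2$), such that the combined sweep $\bigcup_i \bigcup_{p \in L_i}(p + \{x \in \S^{n-1} : \dist(x, V_i) > \epsilon\})$ has measure at most $\eta$ larger than the sweep of the original two segments. The key point, as in the planar Pál join, is that the transition's sweep largely \emph{coincides} with the existing sweep, so only a negligible new region is added.

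Second, I would run an iterative refinement starting from the direct polygonal path (the single segment from $0$ to the prescribed point). At each stage, pieces of the current path are replaced by Pál-join transitions arranged so that the sweeps of the many new small segments collapse onto previously swept regions. The Pál join is applied at smaller and smaller scales; because each application introduces only an $\eta$-increment to the sweep while geometrically reducing the ``uncancelled'' contribution by a definite factor, after finitely many iterations the total sweep drops below~$\epsilon$. The number of iterations and parameters are tuned, as in \cite{CC2019}, to balance the accumulated $\eta$-increments against the geometric decay.

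The essential new geometric input compared to the planar case is that $V_i \cap \S^{n-1}$ is now an $(n-2)$-dimensional great subsphere rather than a pair of antipodal points, so the removed set $\{x \in \S^{n-1} : \dist(x, V_i) \leq \epsilon\}$ is an entire equatorial band. To make the Pál join viable, for each segment of direction $e_i$ I would select $V_i$ to contain $e_i$ (or a slight perturbation of it), so that the removed band absorbs the spherical points with the largest component in the $e_i$ direction---precisely the points whose sweep along $L_i$ would dominate---while the complementary ``restricted'' set lies close to the equator $e_i^\perp \cap \S^{n-1}$ and sweeps a correspondingly thinner region.

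The step I expect to be the main obstacle is verifying that, as the direction of motion rotates along the refined polygonal path, the planes $V_i$ can be chosen coherently so that the restricted sphere's swept region for each new segment actually overlaps the swept regions of neighboring segments. In the planar case \cite{CC2019}, this coherence is relatively rigid because $V_i \cap S^1$ consists of just two points; in higher dimensions the extra freedom in choosing $V_i$ (an $(n-1)$-plane containing a prescribed line) is convenient but makes the bookkeeping for overlap subtler. Once this coherence is established at the level of a single Pál join, the iterative refinement and the final estimate should proceed exactly as in \cite{CC2019}, and the Borel measurability assertion in \Cref{theorem:nikodym-translations-sphere} (which is the statement of interest after \Cref{theorem:kakeya-translations-sphere} is established) will then follow by a standard measurable selection argument as in the planar proof.
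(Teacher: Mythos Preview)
Your proposal is in the right spirit but misses the paper's key simplification, and that omission is precisely what creates the coherence obstacle you flag at the end.

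The paper does not run a Venetian blind in $\R^n$. By symmetry one may assume the target point lies in a fixed $2$-plane $\R^2\subset\R^n$, and the polygonal path is then taken \emph{entirely inside that plane}. One runs the planar iterated Venetian blind of \cite[Section~4]{CC2019} verbatim; only the definition of the swept pieces changes, to $E_I=\{x\in\S^{n-1}: x\cdot\theta=0 \text{ for some }\theta\in I\}$ for intervals $I\subset\P^1$. Two short lemmas (the $n$-dimensional replacements for \cite[Lemmas~2.2 and~3.3]{CC2019}) make the same estimates go through. When $I=\P^1\setminus B(\theta_0,\epsilon)$ one checks that $E_I=\{x:\dist(x,\theta_0^\perp)>\epsilon'\}$, so the hyperplanes are $V_i=\theta_0^\perp$ with $\theta_0\in\S^1\subset\R^2$. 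Because all segment directions live in $\S^1$, the bookkeeping is exactly the one-parameter bookkeeping of \cite{CC2019}: there is no free parameter in the choice of $V_i$ and hence no coherence issue at all.

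Your choice $V_i\ni e_i$, by contrast, leaves an $(n-2)$-parameter freedom (the normal $n_i$ can be any unit vector in $e_i^\perp$), and making these choices compatible across a Venetian blind is a real obstacle you have not resolved. Your geometric heuristic is also off: with $V_i\ni e_i$ the swept set $\{x\in\S^{n-1}:\dist(x,V_i)>\epsilon\}$ is a pair of near-hemispherical caps centered at $\pm n_i\in e_i^\perp$, and each has width $\approx 2$ in the $e_i$-direction, so an individual sweep along $L_i$ is not thin. The smallness in a Venetian blind comes from overlap across many segments, not from thinness of a single sweep, and that overlap is exactly what the one-parameter planar combinatorics of \cite{CC2019} controls. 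Restricting the path to $\R^2$ is not a convenience but the actual mechanism that makes the higher-dimensional statement go through.
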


The proof of \Cref{theorem:kakeya-translations-sphere} is a relatively straightforward generalization of the proof of \cite[Theorem 1.2]{CC2019}. Here, we provide a few details. We begin with the following two basic estimates, which are analogues of  \cite[Lemma 2.2]{CC2019} and \cite[Lemma 3.3]{CC2019}, respectively.

\begin{lemma}\label{lemma:kakeya-basic-estimate1}
For any polygonal path $P\subset \R^n$ and for an arbitrary $E\subset\R^n$, we have
$|P + E| \lesssim\cH^{n-1}(E)\cH^1(P)$. 
\end{lemma}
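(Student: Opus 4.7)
The plan is to prove the bound first for a single line segment of $P$ and then to sum over the segments. For a single line segment $L \subset \R^n$ of length $\ell$, I would argue as follows. Given $\delta > 0$, use the definition of the Hausdorff measure to choose a cover $\{U_i\}$ of $E$ with diameters $r_i \leq \delta$ and $\sum_i r_i^{n-1}$ within a small $\epsilon$ of $\cH^{n-1}_\delta(E) \leq \cH^{n-1}(E)$. Each $U_i$ lies in a ball of radius $r_i$, so $L + U_i$ is contained in the $r_i$-neighborhood of $L$, which is a ``stadium'' (a cylinder of length $\ell$, radius $r_i$, with two spherical caps of radius $r_i$) with volume at most $C_n (\ell + r_i) r_i^{n-1}$.

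Summing over $i$ then gives
\[
|L + E| \leq \sum_i |L + U_i| \lesssim \sum_i (\ell + r_i)\, r_i^{n-1} \leq (\ell + \delta)\bigl(\cH^{n-1}(E) + \epsilon\bigr).
\]
Sending $\epsilon \to 0$ and then $\delta \to 0$ yields the desired estimate $|L + E| \lesssim \ell\, \cH^{n-1}(E)$ for a single segment.

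For the polygonal path $P = \bigcup_{i=1}^m L_i$, the inclusion $P + E \subset \bigcup_i (L_i + E)$ and subadditivity of Lebesgue (outer) measure would give
\[
|P + E| \leq \sum_i |L_i + E| \lesssim \sum_i \ell_i\, \cH^{n-1}(E) = \cH^1(P)\, \cH^{n-1}(E),
\]
where $\ell_i = \cH^1(L_i)$ and the last equality uses the fact that consecutive segments of a polygonal path meet in a set of $\cH^1$-measure zero.

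There is no serious obstacle here; the argument is a routine covering estimate based on the elementary observation that the Minkowski sum of a segment with a small set is a thin tube. The only point requiring some care is that one should work with the $\delta$-Hausdorff premeasure $\cH^{n-1}_\delta$ rather than $\cH^{n-1}$ directly, so that the error term $\sum_i r_i^n \leq \delta \sum_i r_i^{n-1}$ disappears in the limit $\delta \to 0$.
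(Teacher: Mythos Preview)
Your proof is correct and follows essentially the same approach as the paper: cover $E$ by small balls, use that the Minkowski sum of a segment with a small ball is a tube of volume $\lesssim r^{n-1}\ell$, and then sum over the segments of $P$. Your version is in fact more careful than the paper's sketch, since you work explicitly with the premeasure $\cH^{n-1}_\delta$ and pass to the limit in $\delta$, whereas the paper simply assumes the covering balls have radius smaller than the shortest segment of $P$.
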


\begin{proof}
If $B$ is a ball of radius $r$, where $r$ is smaller than the line segments in the polygonal path, then for each line segment $L\subset P$, we have $|L+B|\lesssim r^{n-1}\cH^1(L)$. Adding these up for all line segments $L$ and approximating $E$ by a union of small balls, we obtain \Cref{lemma:kakeya-basic-estimate1}.
\end{proof}

\begin{lemma}
	\label{lemma:kakeya-basic-estimate2}
	Let $\epsilon > 0$. Let $L \subset \R^n$ be a line segment in the direction $\theta \in \P^{n-1}$. Suppose $R \subset \{x \in \S^{n-1} : |x \cdot \theta| \leq \epsilon\}$. Then $|L + R| \leq \epsilon \cH^1(L) \cH^{n-1}(R)$.
\end{lemma}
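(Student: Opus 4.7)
The plan is to apply the area formula to the natural parametrization of $L + R$ and compute the Jacobian explicitly, exploiting the fact that near $R$ the sphere is almost tangent to the direction $\theta$. After translating so that $L = \{t\theta : t \in [0, \ell]\}$ with $\ell = \cH^1(L)$, I consider the map
\begin{equation*}
    \Phi : [0, \ell] \times R \to \R^n, \qquad \Phi(t, x) = t\theta + x,
\end{equation*}
whose image is exactly $L + R$. The domain, equipped with the product measure $dt \otimes d\sigma$ (where $\sigma$ is surface measure on $\S^{n-1}$), is an $n$-dimensional smooth manifold, and the area formula for Lipschitz maps between $n$-manifolds yields
\begin{equation*}
    |L + R| \;=\; |\Phi([0, \ell] \times R)| \;\leq\; \int_0^\ell \!\int_R |J\Phi(t, x)| \, d\sigma(x) \, dt.
\end{equation*}

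The key computation is to show that $|J\Phi(t, x)| = |\theta \cdot x|$. Fixing an orthonormal basis $v_1, \ldots, v_{n-1}$ of the tangent space $T_x \S^{n-1} = x^\perp$, the matrix of $d\Phi_{(t,x)}$ with respect to the orthonormal basis $(\partial_t, v_1, \ldots, v_{n-1})$ of the domain and the standard basis of $\R^n$ has columns $\theta, v_1, \ldots, v_{n-1}$. Decomposing $\theta = (\theta \cdot x)\,x + \theta_\perp$ with $\theta_\perp \in x^\perp$, the contribution of $\theta_\perp$ to the determinant vanishes since $\theta_\perp \in \mathrm{span}\{v_i\}$, while $|\det[x \mid v_1 \mid \cdots \mid v_{n-1}]| = 1$, giving $|J\Phi(t,x)| = |\theta \cdot x|$. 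Combined with the hypothesis $|\theta \cdot x| \leq \epsilon$ on $R$, this produces
\begin{equation*}
    |L + R| \;\leq\; \ell \int_R |\theta \cdot x| \, d\sigma(x) \;\leq\; \epsilon\, \cH^1(L)\, \cH^{n-1}(R),
\end{equation*}
exactly the claimed inequality (with the constant $1$, not just $\lesssim$).

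No real obstacle is anticipated; the only point requiring care is the invocation of the area formula for a smooth map from an $n$-dimensional manifold into $\R^n$, which is classical (\emph{e.g.}\ Federer). The geometric content is that the hypothesis $|x \cdot \theta| \leq \epsilon$ forces $R$ to lie near the great $(n{-}2)$-sphere equatorial to $\theta$, so $\theta$ is nearly tangent to $\S^{n-1}$ at every point of $R$; sweeping $R$ along $L$ therefore traces out a nearly lower-dimensional region, whose volume is only an $\epsilon$-fraction of the naive product $\cH^1(L)\,\cH^{n-1}(R)$.
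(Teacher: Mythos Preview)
Your proof is correct and is essentially the same computation as the paper's, packaged via the area formula instead of a projection argument. The paper assumes $\theta=e_1$, projects onto $e_1^\perp$, and uses the graph formula $\sqrt{1+|\nabla f|^2}=1/|x_1|$ (splitting $R$ into upper and lower hemispheres) to get $\cH^{n-1}(P(R^\pm))\le\epsilon\,\cH^{n-1}(R^\pm)$, then Fubini along $e_1$; your Jacobian $|J\Phi|=|\theta\cdot x|$ is exactly the reciprocal of the paper's factor, and your single application of the area formula to $\Phi$ avoids the hemisphere split.
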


\begin{proof}
	Without loss of generality, assume $\theta = e_1 = (1,0, \ldots, 0)$. Let $P : \R^n \to \R$ be the orthogonal projection onto $e_1^\perp$. Let $R = R^+ \cup R^-$, where $R^+ = \{(x_1, \ldots, x_n) \in R : x_1 \geq 0\}$. Let $f : \R^{n-1} \to \R$ be given by $f(y) = \sqrt{1-|y|^2}$. Using the hypothesis on $R$, an elementary computation gives
\begin{align}
	\cH^{n-1}(R^+)
	=
	\int_{P(R^+)} \sqrt{1 + |\nabla f(y)|^2} \, dy
	\geq
	\frac{1}{\epsilon} \cH^{n-1}(P(R^+)).
	\end{align}
A similar inequality holds for $R^-$. By Fubini's theorem, we have
\[
     |L + R|
	\leq
	\cH^1(L) \left(\cH^{n-1}(P(R^+)) + \cH^{n-1}(P(R^-))\right)
	 \leq \epsilon \cH^1(L) \cH^{n-1}(R).
	\]
\end{proof}

\begin{proof}[Proof sketch for \Cref{theorem:kakeya-translations-sphere}]

We identify $\R^2$ as the subspace of $\R^n$ spanned by the first two standard basis vectors. By symmetry, we may assume that the prescribed point in the statement of \Cref{theorem:kakeya-translations-sphere} is in $\R^2$. Then we follow the iterated Venetian blind construction presented in \cite[Section 4]{CC2019}. This produces a path $P \subset \R^2 \subset \R^n$.

As noted in \cite[Remark 4.4]{CC2019}, the construction does not depend the set until \cite[Section 4.8]{CC2019}.  Starting from that point, we make the following changes:

\begin{enumerate}
    \item For an interval $I \subset \P^1$, we define \[E_I = \{x \in \S^{n-1} : \text{there exists $\theta \in I$ such that $x \cdot \theta = 0$}\}
    \]
    Here, $\P^1$ is the quotient of $\S^1$ obtained by identifying antipodal points together, and $\S^1$ is the unit circle in $\R^2 \subset \R^n$.

    \item We use \Cref{lemma:kakeya-basic-estimate1} and \Cref{lemma:kakeya-basic-estimate2} in place of  \cite[Lemma 2.2]{CC2019} and \cite[Lemma 3.3]{CC2019}, respectively.
\end{enumerate}

Note that if $I$ is an interval of the form $I = \P^1 \setminus B(\theta_0,\epsilon)$, then
$E_I = \{x \in \S^{n-1} : \dist(x,V) \geq \epsilon'\}$
where $V \subset \R^n$ is the linear hyperplane orthogonal to $\theta_0$, and $\epsilon'$ only depends on $\epsilon$.
\end{proof}

By considering the limit $\epsilon \to 0$ in the appropriate sense (see \cite[Section 6]{CC2019} for details), we can show that \Cref{theorem:kakeya-translations-sphere} implies the following. 

\begin{theorem}[Besicovitch set for spheres]
	\label{theorem:besicovitch-translations-sphere}
	For every path $P_0$ in $\R^n$  and for any neighborhood of $P_0$, there is a path $P$ in this neighborhood with the same endpoints as $P_0$ and there is a $(n-1)$-plane $V_p    $ containing $0$ such that
	\begin{equation}
	\label{eq:besicovitch-set-sphere}
	\Big|\bigcup_{p\in P} (p+ (\S^{n-1} \setminus V_p))\Big|=0.
	\end{equation}
Also, the mapping $p \mapsto V_p$ is Borel.
\end{theorem}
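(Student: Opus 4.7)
The plan is to obtain the path $P$ and the assignment $p \mapsto V_p$ as limits of a sequence of polygonal paths $P_k$ together with piecewise-constant hyperplane assignments produced by iterated application of \Cref{theorem:kakeya-translations-sphere}, with the exceptional sets controlled by a Borel--Cantelli argument. First, I would fix the given neighborhood $U$ of $P_0$ and a rapidly decreasing summable sequence $\epsilon_k \downarrow 0$. Starting with $P_1 := P_0$, build $P_k$ inductively as follows: parametrize $P_k$ on $[0,1]$, cover it by overlapping subarcs of diameter at most $\eta_k$, and on each subarc apply \Cref{theorem:kakeya-translations-sphere} with parameter much smaller than $\epsilon_{k+1}$ to obtain a polygonal replacement sharing endpoints with that subarc; concatenate to form $P_{k+1}$. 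Choosing $\eta_k$ small enough that each replacement lies in an $\eta_k$-neighborhood of the subarc, and $\sum_k \eta_k < \infty$, one ensures $P_k \subset U$ for all $k$ and that $(P_k)$ is Cauchy in the uniform metric on continuous maps $[0,1] \to \R^n$; denote the limit by $P$.

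Next I would handle the hyperplane data. At stage $k+1$, $P_{k+1} = \bigcup_i L_i^{(k+1)}$ comes with hyperplanes $V_i^{(k+1)}$ satisfying
\[
\Big|\bigcup_i \bigcup_{p \in L_i^{(k+1)}} \bigl(p + \{x \in \S^{n-1} : \dist(x, V_i^{(k+1)}) > \epsilon_{k+1}\}\bigr)\Big| < \epsilon_{k+1}.
\]
Define a Borel map $\Phi_{k+1} : [0,1] \to G(n-1,n)$ into the Grassmannian by $\Phi_{k+1}(t) := V_i^{(k+1)}$ whenever $P_{k+1}(t) \in L_i^{(k+1)}$. The key point is to make the iteration \emph{nested} in parameter: arrange that every new segment $L_i^{(k+1)}$ is contained in some old segment $L_j^{(k)}$ except on a parameter set of length at most $\epsilon_{k+1}$, and on the nested portion reuse the old hyperplane $V_j^{(k)}$ instead of invoking \Cref{theorem:kakeya-translations-sphere} from scratch. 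Then for $t$ outside an exceptional set of measure $\sum_k \epsilon_k$, the sequence $\Phi_k(t)$ is eventually constant, so $\Phi(t) := \lim_k \Phi_k(t)$ exists and is Borel almost everywhere; extending by an arbitrary fixed hyperplane on the exceptional set and transferring through the parametrization of $P$ gives the desired Borel map $p \mapsto V_p$.

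Finally, \eqref{eq:besicovitch-set-sphere} would follow by Borel--Cantelli. Let $A_k$ denote the stage-$k$ bad set, so $|A_k| < \epsilon_k$. For any $p \in P$ whose parameter is not exceptional, $V_p = V_j^{(k)}$ for all sufficiently large $k$, where $L_j^{(k)}$ is the segment containing the approximating point $P_k(t)$; the Hausdorff distance $\dist(p,P_k) \to 0$ together with \Cref{lemma:kakeya-basic-estimate1} (applied to the tiny segment joining $p$ to its approximant) shows that $p + (\S^{n-1} \setminus V_p)$ is contained in $A_k$ up to a set of Lebesgue measure $O(\dist(p,P_k))$. Summing and passing to $\limsup$ gives $\bigcup_{p \in P}(p + (\S^{n-1} \setminus V_p)) \subset \limsup_k A_k$ modulo a null set, and $\sum_k |A_k| < \infty$ forces Lebesgue measure zero.

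The main obstacle I foresee is implementing the nestedness so that the hyperplane data genuinely stabilizes along the iteration while still allowing the Kakeya needle refinement to keep shrinking the bad set; this is essentially the content of \cite[Section 6]{CC2019} in the planar case. Since the two ingredients that the planar argument uses are in place for $\S^{n-1}$---namely the iterated Venetian blind construction underlying \Cref{theorem:kakeya-translations-sphere} and the volume estimates \Cref{lemma:kakeya-basic-estimate1} and \Cref{lemma:kakeya-basic-estimate2}---the same bookkeeping should transfer to $n \geq 3$ with only notational changes.
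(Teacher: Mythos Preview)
Your overall framework---iterate \Cref{theorem:kakeya-translations-sphere} with $\epsilon_k \to 0$, pass to a uniform limit path, and control the measure via Borel--Cantelli---is exactly the limiting procedure the paper invokes from \cite[Section 6]{CC2019}, so at the architectural level you match the paper.

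There is, however, a genuine gap in your mechanism for the hyperplane data. You propose to arrange the iteration so that on most of each segment one ``reuses the old hyperplane $V_j^{(k)}$,'' making $\Phi_k(t)$ eventually constant off a small exceptional parameter set. This cannot coexist with the measure bound: the smallness in \Cref{theorem:kakeya-translations-sphere} comes from \Cref{lemma:kakeya-basic-estimate2}, which requires the hyperplane attached to a segment to be orthogonal to a direction close to that segment's \emph{own} direction. If you retain the old hyperplane while the Venetian blind rotates the segment, the estimate is lost; if you retain both the old segment and the old hyperplane on most of the path, there is no reduction in measure from stage to stage. The device that actually works, already visible in the proof sketch of \Cref{theorem:kakeya-translations-sphere}, is to carry on each segment an interval $I_i^{(k)} \subset \P^1$ rather than a single hyperplane, with associated good set $E_{I_i^{(k)}}$. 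The Venetian blind refinement makes these intervals nest, $I_i^{(k+1)} \subset I_{j(i)}^{(k)}$, and in the limit they shrink to a single direction $\theta_p$, giving $V_p = \theta_p^\perp$ for every $p \in P$ with no exceptional set. Thus the $\Phi_k$ converge rather than stabilize. You correctly flag this step as the main obstacle and defer to \cite[Section 6]{CC2019}; that deferral is appropriate, but the specific stabilization scheme you sketch is not the one that carries the argument.
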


By taking countably many translates of the set in \eqref{eq:besicovitch-set-sphere} (again, see \cite[Section 6]{CC2019} for details), we can show \Cref{theorem:besicovitch-translations-sphere} implies \Cref{theorem:nikodym-translations-sphere}.

\section{Geometric estimates for annuli}\label{section:volumebound}
\subsection{Intersection of two annuli of radius comparable to 1}\label{sub:AppA}
Let $S(a,r)$ denote the sphere of radius $r$ centered at $a$ and let $S^\delta(a,r)$ denote its $\delta$-neighborhood. We define
\begin{align*}
d(S(a,r),S(b,s))  &= |a-b| + |r-s| \\
\Delta(S(a,r),S(b,s)) &= ||a-b|-|r-s|| \cdot |r+s-|a-b||.
\end{align*}

In \cite[Lemma 3.2]{Wolff_survey}, the following bound on $\R^2$ for circles is proved;
\begin{equation}\label{eqn:2d}
 |S^\delta(a,r) \cap S^\delta(b,s) | \les \frac{\delta^2}{\sqrt{(d+\delta)(\Delta+\delta)}}. 
\end{equation}
 Moreover, it was shown that the distance from the line through the centers and the intersection is $O(\sqrt{ \frac{\Delta+\delta}{d+\delta}})$. It is assumed there that the circles cannot be externally tangent; i.e. $|r+s-|a-b|| \sim 1$. An inspection of the proof shows that the estimate continues to hold without the assumption if we adopt the above definition of $\Delta(S(a,r),S(b,s))$.

We may generalize the above estimate in higher dimensions.
\begin{lemma} \label{lem:intersection}Let $n\geq 2$, $a,b\in \R^n$, and $r, s \in [1/2,2]$. Then 
\begin{align}
\label{eq:intersection-2-spheres-vary}
|S^\delta(a,r) \cap S^\delta(b,s) |\les
\frac{\delta^2}{d+\delta} \left(\frac{\Delta+\delta}{d+\delta}\right)^{(n-3)/2}.
\end{align}
As special cases, if $n \geq 3$, or if $n=2$ and $r=s$, then
\begin{align}
\label{eq:intersection-2-spheres-vary-special}
|S^\delta(a,r) \cap S^\delta(b,s) |\les
\frac{\delta^2}{d+\delta}.
\end{align}
\end{lemma}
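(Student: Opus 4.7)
The plan is to exploit the rotational symmetry of the configuration around the line through $a$ and $b$ to reduce the $n$-dimensional volume to a two-dimensional integral, and then apply the known two-dimensional estimate \eqref{eqn:2d} for the intersection of two $\delta$-annuli.

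One may first assume the two $\delta$-neighborhoods actually meet, which forces $|r-s|-O(\delta)\leq D\leq r+s+O(\delta)$, where $D:=|a-b|$; under the hypothesis $r,s\in[1/2,2]$ this makes $D+\delta$, $r+s-D$, $D-|r-s|$, and $r+s+D$ all of size $O(1)$, and in particular $D+\delta\sim d+\delta$. The degenerate regime $D\les\delta$ can be handled separately by the trivial bound $|S^\delta(a,r)\cap S^\delta(b,s)|\leq|S^\delta(a,r)|\les\delta$, which matches the right-hand side of \eqref{eq:intersection-2-spheres-vary} in this range. Assuming then $D\ges\delta$, after a rigid motion take $a=0$ and $b=De_1$, and write $x\in\R^n$ in cylindrical coordinates $x=(x_1,\rho\omega)$ with $\rho\geq 0$ and $\omega\in\S^{n-2}$. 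Since both $\delta$-neighborhoods are invariant under rotations in $\omega$,
\[|S^\delta(a,r)\cap S^\delta(b,s)| \sim \int_{\Omega}\rho^{n-2}\,d\rho\,dx_1,\]
where $\Omega\subset\{(x_1,\rho):\rho\geq 0\}$ is the intersection of the $\delta$-neighborhoods of the arcs $A:\{x_1^2+\rho^2=r^2\}$ and $B:\{(x_1-D)^2+\rho^2=s^2\}$.

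The two-dimensional bound \eqref{eqn:2d}, applied to $A$ and $B$, gives $|\Omega|\les\delta^2/\sqrt{(d+\delta)(\Delta+\delta)}$. To control $\sup_\Omega\rho$, subtracting the defining equations of $A$ and $B$ shows that any $(x_1,\rho)\in\Omega$ satisfies $|x_1-x_1^*|\les\delta/D$ with $x_1^*=(D^2+r^2-s^2)/(2D)$, and substituting back yields $|\rho^2-\rho_0^2|\les\delta/D$, where $\rho_0^2:=r^2-(x_1^*)^2$. The Heron-type identity
\[\rho_0^2=\frac{((r+s)^2-D^2)(D^2-(r-s)^2)}{4D^2}\sim\frac{\Delta}{D}\sim\frac{\Delta}{d}\]
(using $r+s+D\les 1$ and $D\sim d$) then gives $\sup_\Omega\rho^2\les(\Delta+\delta)/(d+\delta)$. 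Multiplying,
\[|S^\delta(a,r)\cap S^\delta(b,s)| \les \left(\frac{\Delta+\delta}{d+\delta}\right)^{(n-2)/2}\cdot\frac{\delta^2}{\sqrt{(d+\delta)(\Delta+\delta)}} = \frac{\delta^2}{d+\delta}\left(\frac{\Delta+\delta}{d+\delta}\right)^{(n-3)/2},\]
which is \eqref{eq:intersection-2-spheres-vary}.

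For the special estimate \eqref{eq:intersection-2-spheres-vary-special}, I would verify $\Delta\les d$ so that the factor $(\Delta+\delta)/(d+\delta)\les 1$ can be dropped from the general bound: when $n\geq 3$ this is immediate from $\Delta=(D-|r-s|)(r+s-D)\leq(r+s)d\les d$ together with the non-negative exponent $(n-3)/2$, while for $n=2$ with $r=s$ one has $\Delta/d=r+s-D\les 1$. The main technical obstacle is the $\rho$-localization in the near-tangent regime $\Delta\les\delta$, where $\Omega$ may extend down to $\rho=0$ and the multiplicative approximation $\rho\sim\rho_0$ fails; the uniform additive bound $|\rho^2-\rho_0^2|\les\delta/D$ derived above handles this cleanly by yielding $\sup_\Omega\rho^2\les(\Delta+\delta)/(d+\delta)$, which is exactly what the integration needs.
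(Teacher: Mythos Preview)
Your argument for the main estimate \eqref{eq:intersection-2-spheres-vary} is correct and is essentially the paper's approach: both exploit the rotational symmetry about the line through the centers and feed in the two ingredients from the $n=2$ case, namely the area bound \eqref{eqn:2d} and the fact that the intersection lies within distance $O(\sqrt{(\Delta+\delta)/(d+\delta)})$ of that line. The paper phrases this as an induction on $n$, peeling off one angular variable at a time, whereas you pass to full cylindrical coordinates and integrate out the whole $\S^{n-2}$ factor in one step; these are equivalent reorganizations of the same computation.

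There is, however, a slip in your treatment of the special case \eqref{eq:intersection-2-spheres-vary-special} when $n=2$ and $r=s$. You argue that $\Delta/d=r+s-D\lesssim 1$ lets you drop the factor $((\Delta+\delta)/(d+\delta))^{(n-3)/2}$, but for $n=2$ this exponent is $-1/2$, so you need the \emph{opposite} inequality $\Delta\gtrsim d$, i.e.\ $r+s-D\gtrsim 1$. That is exactly the statement that the two equal-radius circles are not nearly externally tangent. This is not automatic from the bare hypotheses $r,s\in[1/2,2]$; if $D$ is close to $2r$ the intersection has area $\sim\delta^{3/2}$ while $\delta^2/(d+\delta)\sim\delta^2$, and the claimed bound fails. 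In the paper's applications the equal-radius case is only invoked for unit spheres whose centers lie in a set of diameter at most $c_{\mathrm{diam}}\le 1/2$, so $r+s-D\ge 2-1/2\gtrsim 1$ and the issue does not arise. You should either add that hypothesis explicitly or note that the $n=2,\ r=s$ special case requires excluding external tangency.
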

\begin{proof}
The proof of \eqref{eq:intersection-2-spheres-vary} is by induction on $n$. When $n=2$, this is just \eqref{eqn:2d}. Without loss of generality, we may assume that the $x_1$-axis is the line through the centers of the spheres. It suffices to prove the inductive step
\begin{equation} \label{eqn:ind}
|S^\delta(a,r) \cap S^\delta(b,s) |  \les \sqrt{ \frac{\Delta+\delta}{d+\delta}}  |S^\delta(a,r) \cap S^\delta(b,s) \cap \{ x_n=0 \}|,
\end{equation}
where $|S^\delta(a,r) \cap S^\delta(b,s) \cap \{ x_n=0 \}|$ is the $(n-1)$-dimensional measure of the slice of the intersection.

We write $x' = (x_1,\cdots,x_{n-2})$ and use the change of variable 
$(x_1,\cdots,x_n) \mapsto (x', u\cos \theta, u\sin \theta)$ with $u\in \R$ and $\theta \in [0,\pi)$. Then
\[ |S^\delta(a,r) \cap S^\delta(b,s) |  = \int_0^{\pi} \iint_{\R^{n-1}}  1_{ S^\delta(a,y) \cap S^\delta(b,s)} (x', u\cos \theta, u\sin\theta) \, dx' \, |u| \, du\,  d\theta. \]
Note that $1_{ S^\delta(a,r) \cap S^\delta(b,s)} (x', u\cos \theta, u\sin\theta) = 1_{ S^\delta(a,r) \cap S^\delta(b,s)} (x', u, 0)$ and it is zero unless $|u| =O(\sqrt{ \frac{\Delta+\delta}{d+\delta}})$. This is because the distance from the line through the centers and the intersection is $O(\sqrt{ \frac{\Delta+\delta}{d+\delta}})$ as was observed in \cite{Wolff_survey}. This consideration proves \eqref{eqn:ind}.
\end{proof}

\subsection{Intersection of three annuli of radius 1}

\begin{lemma}[Intersection of three annuli]\label{lem:three_spheres}
There exist absolute constants $c_1, c_2>0$ so that the following is true. Suppose $n \geq 3$. Let $a_1, a_2, a_3 \in \R^n$ be distinct. Let $M = \max(|a_1-a_2|,|a_2-a_3|,|a_3-a_1|)$, let $m = \min(|a_1-a_2|,|a_2-a_3|,|a_3-a_1|)$, and let $R \in (0, \infty]$ denote the radius of the (unique) circle that passes through $a_1, a_2, a_3$.
Suppose $0 < \delta < \frac{1}{2}$ and 
\begin{align}
\label{eq:mM-bounds}
c_1\delta^{1/2} \leq m \leq M \leq c_2.
\end{align}
Then
\begin{align}
\label{eq:circumradius-near-1-estimate}
|\sph{\delta}{a_1} \cap \sph{\delta}{a_2} \cap \sph{\delta}{a_3}|
\lesssim
\frac{\delta^{5/2}}{M^{3/2}m^{1/2}}.
\end{align}
Furthermore, in two special cases, we have better bounds:
\begin{enumerate}
\item 
If $R \geq 2$, then
\begin{align}
\label{eq:empty-intersection}
\sph{\delta}{a_1} \cap \sph{\delta}{a_2} \cap \sph{\delta}{a_3} = \emptyset.
\end{align}
\item 
If $R \leq \frac{1}{2}$ or $n \geq 4$, then
\begin{align}
\label{eq:circumradius-small-estimate}
|\sph{\delta}{a_1} \cap \sph{\delta}{a_2} \cap \sph{\delta}{a_3}|
\lesssim
\frac{\delta^3}{M^2 m}.
\end{align}
\end{enumerate}
The implied constants depend only on the dimension $n$.
\end{lemma}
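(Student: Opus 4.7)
The plan is to reduce the three bounds in the lemma to a coarea calculation for the map $\Phi:\R^n\to\R^3$, $\Phi(x)=(|x-a_1|,|x-a_2|,|x-a_3|)$, combined with a slicing argument in the direction normal to the plane of the triangle.  Choose coordinates so that the $2$-plane $P$ through $a_1,a_2,a_3$ equals $\mathrm{span}(e_1,e_2)$ and the circumcenter $O$ is the origin; write $x=(x_P,x_\perp)\in P\times P^\perp$, so that $|x-a_i|^2=|x_P-a_i|^2+|x_\perp|^2$.  Then each $x_\perp$-slice of $\bigcap_i\sph{\delta}{a_i}$ is, up to a $\delta/\rho$ width correction, the intersection in $P$ of three planar $\delta$-annuli of common central radius $\rho(x_\perp)=\sqrt{1-|x_\perp|^2}$ centered at $a_1,a_2,a_3$.

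For \eqref{eq:circumradius-small-estimate}, apply the coarea formula to $\Phi$.  At any point $x$ of the level set $\Phi^{-1}(1,1,1)$ one has $x=(0,\sqrt{1-R^2}\,e)$ for some unit $e\in P^\perp$; the Jacobian $J_\Phi(x)$ equals the $3$-volume of the parallelepiped spanned by $x-a_1,\,x-a_2,\,x-a_3$, which is six times the volume of the tetrahedron with vertices $x,a_1,a_2,a_3$, namely $2K\sqrt{1-R^2}$, where $K$ denotes the area of triangle $a_1a_2a_3$.  Using $K = mM\mu/(4R)$ for the middle side $\mu\geq M/2$, this gives $J_\Phi\sim mM^2\sqrt{1-R^2}/R$.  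The level set is a round $(n-3)$-sphere of radius $\sqrt{1-R^2}$ in $P^\perp$ (with $(n-3)$-dimensional Hausdorff measure $\sim(1-R^2)^{(n-3)/2}$ for $n\geq 4$, and two points for $n=3$), and the coarea formula yields
\[
|\sph{\delta}{a_1}\cap\sph{\delta}{a_2}\cap\sph{\delta}{a_3}|\sim \frac{\delta^{3} R (1-R^2)^{(n-4)/2}}{M^2m},
\]
which is $\lesssim \delta^3/(M^2m)$ whenever either $R\leq 1/2$ (so the $(1-R^2)$ factors are bounded away from zero) or $n\geq 4$ (so $(1-R^2)^{(n-4)/2}\leq 1$), establishing \eqref{eq:circumradius-small-estimate}.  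The hypothesis $m\gtrsim \delta^{1/2}$ in \eqref{eq:mM-bounds} ensures the coarea remainders are absorbed.  The same picture gives \eqref{eq:empty-intersection}: for $R\geq 2$ any admissible slice requires $\rho(x_\perp)\leq 1$, while the three planar $\delta$-annuli in $P$ can share a common point only when $|\rho-R|$ is small, and choosing $c_2$ small and $c_1$ large (relative to the lower bound on $m$) makes these requirements incompatible.

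For the general bound \eqref{eq:circumradius-near-1-estimate} in the remaining regime $n=3$ with $1/2\leq R\leq 2$, where the transversality above degenerates as $R\to 1$, I would slice directly by $x_3$.  For each fixed $x_3$ the slice is the intersection in $\R^2$ of three planar $\delta$-annuli of common radius $\rho(x_3)$ centered at $a_1,a_2,a_3$; apply the planar two-annulus bound \eqref{eqn:2d} from \Cref{lem:intersection} to the pair realizing the longest side $M$, then cut by the third annulus, which contributes one further factor of $\delta$ divided by a tangency length controlled by $R$ and $\sqrt{1-R^2}$.  Integrating in $x_3$ over the nonempty range (of length $O(\delta^{1/2})$ outside which $|\rho(x_3)-R|$ is too large for the three annuli to share a common point) and tracking the planar $\Delta$-parameter should yield $\delta^{5/2}/(M^{3/2}m^{1/2})$.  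The main obstacle is executing this slicing cleanly: the radii $\rho(x_3)$ vary with $x_3$, and one must relate the planar $\Delta$-parameters to $R$, $\sqrt{1-R^2}$, and the side lengths, while separating the internal- and external-tangency regimes among the three circles in each slice.
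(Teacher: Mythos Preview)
Your coarea outline for \eqref{eq:circumradius-small-estimate} is a reasonable heuristic, but as written it has real gaps. The displayed asymptotic $\delta^{3}R(1-R^{2})^{(n-4)/2}/(M^{2}m)$ only makes sense for $R<1$; yet the $n\geq 4$ clause of \eqref{eq:circumradius-small-estimate} must cover $1\leq R<2$ as well, where $\Phi^{-1}(1,1,1)$ is empty but nearby level sets $\Phi^{-1}(r)$ with $r\in[1-\delta,1+\delta]^{3}$ are not. More generally, the coarea formula requires you to control $J_{\Phi}(x)$ and $\mathcal H^{n-3}(\Phi^{-1}(r))$ uniformly over the whole cube $[1-\delta,1+\delta]^{3}$, not just at the center; the sentence ``$m\gtrsim\delta^{1/2}$ ensures the coarea remainders are absorbed'' does not do this. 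For \eqref{eq:circumradius-near-1-estimate} you explicitly leave the slicing unexecuted, and that is precisely the case ($n=3$, $R$ near $1$) where the transversality you rely on degenerates, so the proposal is incomplete there.

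The paper bypasses all of this with an elementary linearization. Place $a_{1},a_{2},a_{3}\in\R^{2}\subset\R^{n}$ and write $x=(x_{P},y)\in\R^{2}\times\R^{n-2}$. Subtracting the annulus conditions $(1-\delta)^{2}\leq |x_{P}-a_{i}|^{2}+|y|^{2}\leq(1+\delta)^{2}$ pairwise kills the quadratic terms and yields the linear constraints $|\langle x_{P},a_{i}-a_{3}\rangle|\leq 2\delta$ for $i=1,2$: two strips in $\R^{2}$ of widths $4\delta/M$ and $4\delta/m$ meeting at the angle $\theta$ of the triangle at $a_{3}$, with $\sin\theta\geq M/(4R)$ by the law of sines. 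Hence $S_{1}\cap S_{2}$ lies in a rectangle of size $O(\delta/M)\times O(\delta R/(mM))$, while the remaining condition confines $|y|^{2}$ to an interval of length $O(\delta)$. A single Fubini integral then gives all three conclusions: for $n\geq 4$ or $R\leq 1/2$ the factor $((1+\delta)^{2}-|x_{P}-a_{1}|^{2})^{(n-4)/2}$ is bounded and \eqref{eq:circumradius-small-estimate} drops out; for $n=3$ with $R$ near $1$ one integrates the explicit singularity $((1+\delta)^{2}-|x_{P}-a_{1}|^{2})^{-1/2}$ over the rectangle to obtain \eqref{eq:circumradius-near-1-estimate}; and for $R\geq 2$ the rectangle lies in $B(0,R/5)$ while any point of $B(a_{1},1+\delta)$ has norm at least $R-3/2\geq R/4$, forcing the triple intersection to be empty. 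This is both simpler and complete; if you want to salvage the coarea route, you would at minimum need a uniform lower bound on $J_{\Phi}$ over the full intersection and a separate treatment of the $R\geq 1$ regime.
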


\begin{remark}
\label{remark:n-is-3-and-circumradius-close-to-1}
Note that \eqref{eq:empty-intersection} and \eqref{eq:circumradius-small-estimate} give better estimates than \eqref{eq:circumradius-near-1-estimate}. Thus,  \eqref{eq:circumradius-near-1-estimate} is only useful when $n=3$ and $\frac{1}{2} \leq R \leq 2$.
\end{remark}

\begin{proof}
Without loss of generality, we may make the following assumptions: $a_1, a_2, a_3 \in \R^2 \subset \R^n$, $|a_1| = |a_2| = |a_3| = R$, $|a_1-a_3| = M, |a_2-a_3| = m$, $a_1 = (-\frac{M}{2},(R^2 - \frac{M^2}{4})^{1/2})$, and $a_3 = (\frac{M}{2},(R^2 - \frac{M^2}{4})^{1/2})$.

First, we claim the following:
\begin{align}
\label{eq:triple-intersection-inclusion}
\begin{split}
&\sph{\delta}{a_1} \cap \sph{\delta}{a_2} \cap \sph{\delta}{a_3}    
\\
&\subset
\{(x,y) \in \R^2 \times \R^{n-2} : x \in S_1 \cap S_2 \cap B(a_1, 1+\delta), |y|^2 \in I(x)\}
\end{split}
\end{align}
where
\begin{align}
S_i &:= \{ x \in \R^2 : \left|\left< x, a_i-a_3\right> \right| \leq 2\delta\} \text{ for } i=1,2
\\
\label{eq:def-Ix}
I(x) &:= [(1-\delta)^2 - |x-a_1|^2, (1+\delta)^2 - |x-a_1|^2] \text{ for } x \in \R^2.
\end{align} 
Suppose $(x,y) \in \sph{\delta}{a_1} \cap \sph{\delta}{a_2} \cap \sph{\delta}{a_3}$ with $x \in \R^2$ and $y \in \R^{n-2}$. Then $(1-\delta)^2 \leq |x-a_i|^2 + |y|^2 \leq (1+\delta)^2$ for $i=1,2,3$. By subtracting these inequalities from each other, we obtain $x \in S_i$ for $i=1,2$. Furthermore, the inequality for $i=1$ implies $x \in B(a_1, 1+\delta)$ and $|y|^2 \in I(x)$. This completes the proof of \eqref{eq:triple-intersection-inclusion}, from which we deduce
\begin{align}
\begin{aligned}
&|\sph{\delta}{a_1} \cap \sph{\delta}{a_2} \cap \sph{\delta}{a_3}|
\\
&\labelrel\leq{eq:triple-intersection-inclusion}
\int_{S_1 \cap S_2 \cap B(a_1, 1+\delta)} | \{ y \in \R^{n-2} : |y|^2 \in I(x) \}| \, dx
\\
&\labelrel\lesssim{eq:def-Ix}
\delta
\int_{S_1 \cap S_2 \cap B(a_1, 1+\delta)} 
((1+\delta)^2-|x-a_1|^2)^{(n-4)/2}
\, dx
\label{eq:triple-intersection-fubini}    
\end{aligned}
\end{align}

Note that $S_i$ is an infinite strip of width $\frac{4\delta}{|a_i - a_3|}$. Let $\theta$ denote the angle between the strips $S_1$ and $S_2$. Then by elementary geometry,
$
S_1 \cap S_2 
\subset 
[-\frac{2\delta}{M}, \frac{2\delta}{M}] \times [-\frac{4\delta}{m\sin\theta}, \frac{4\delta}{m\sin\theta}].
$
Note that $\theta$ is also the angle of $a_3$ in the triangle $a_1,a_2,a_3$, so by the law of sines, $ \sin\theta =  \frac{|a_1-a_2|}{2R} \geq \frac{M}{4R}$. Thus, 
\begin{align}
\label{eq:S1-cap-S2}    
S_1 \cap S_2 
\subset 
[-\frac{2\delta}{M}, \frac{2\delta}{M}] \times [-\frac{16\delta R}{m M}, \frac{16\delta R}{m M}]
\end{align}

Now we prove \eqref{eq:empty-intersection}. Suppose $R \geq 2$. By choosing $c_1$ large enough in \eqref{eq:mM-bounds}, we have $S_1 \cap S_2 \subset B(0, \frac{R}{5})$. On the other hand, $x \in B(a_1, 1+\delta)$ implies
$|x - a_1| \leq 1+\delta \leq \frac{3}{2}$, so $|x| \geq |a_1| - |x-a_1| \geq R-\frac{3}{2} \geq \frac{R}{4}$. This shows $S_1 \cap S_2 \cap B(a_1, 1+\delta) = \emptyset$, which implies \eqref{eq:empty-intersection}.

Next, we prove \eqref{eq:circumradius-small-estimate}. We may assume $R \leq 2$. Suppose $n \geq 4$ or $R \leq \frac{1}{2}$. By \eqref{eq:triple-intersection-fubini} and \eqref{eq:S1-cap-S2}, it suffices to show
\begin{align}
\label{eq:n-4-over-2}
((1+\delta)^2-|x-a_1|^2)^{(n-4)/2} \lesssim 1
\qquad\text{for all } x \in S_1 \cap S_2 \cap B(a_1,1+\delta).
\end{align}
If $n \geq 4$, then the exponent $\frac{n-4}{2}$ is nonnegative, so \eqref{eq:n-4-over-2} holds. Thus, it remains to consider the case $n = 3$ and $R \leq \frac{1}{2}$. If $x \in S_1 \cap S_2$, then by choosing $c_2$ small enough in \eqref{eq:mM-bounds}, we have $|x| \leq \frac{1}{4}$, so $|x-a_1| \leq |x| + |a_1| \leq \frac{3}{4}$. Thus $(1+\delta)^2-|x-a_1|^2 \geq 1 - (\frac{3}{4})^2 > 0$, so we obtain \eqref{eq:n-4-over-2}.

Finally, we prove \eqref{eq:circumradius-near-1-estimate}. As noted in \Cref{remark:n-is-3-and-circumradius-close-to-1}, we may suppose $n = 3$ and $R \leq 2$. By \eqref{eq:triple-intersection-fubini}, it suffices to show
\begin{align}
\label{eq:goal-r-close-to-1}
\int_{S_1 \cap S_2 \cap B(a_1, 1+\delta)} 
((1+\delta)^2-|x-a_1|^2)^{-1/2}
\, dx
\lesssim
\frac{\delta^{3/2}}{M^{3/2} m^{1/2}} 
\end{align}
Write $x = (x',x'')$ and $a_1 = (a_1', a_1'')$. Define $v(x') = ((1+\delta)^2 - |x' - a_1'|^2)^{1/2}$, so that $x \in B(a_1, 1+\delta) \iff |x'' - a_1''| \leq v(x')$.  Suppose $x \in S_1 \cap S_2$. Then $|x'-a_1'| \leq |x'| + |a_1'| \leq \frac{2\delta}{M} +  \frac{M}{2} \leq \frac{1}{2}$, if $c_1$ is large enough and $c_2$ is small enough in \eqref{eq:mM-bounds}. This implies $v(x') \geq \frac{3}{4}$, so
\[
(1+\delta)^2-|x-a_1|^2
=
v(x')^2 - |x''-a_1''|^2
\gtrsim
v(x') - |x''-a_1''|.
\]

 Thus,
\begin{align}
&\int_{S_1 \cap S_2 \cap B(a_1, 1+\delta)} 
((1+\delta)^2-|x-a_1|^2)^{-1/2}
\, dx
\\
&\lesssim
\int_{-2\delta/M}^{2\delta/M} 
\int_{\substack{|x''| \leq 32\delta/(mM)\\|x''-a_1''| \leq v(x')}}
(v(x') - |x''-a_1''|)^{-1/2}
\, dx'' \, dx' \label{eq:int3}
\end{align}
(Above, we used \eqref{eq:S1-cap-S2} and $R \leq 2$.)
For a fixed $x'$, we split the inner integral into two parts, depending on the sign of $x'' - a_1''$. Each part, after a change of variables, is an integral of the form $\int_I t^{-1/2} \, dt$, where $I$ is an interval of length at most $\frac{64\delta}{mM}$. Thus,
\begin{align*}
\eqref{eq:int3}
\lesssim
\int_{-2\delta/M}^{2\delta/M} 
\int_{0}^{64\delta/(mM)} 
t^{-1/2}
\, dt \, dx'
\lesssim
\frac{\delta^{3/2}}{M^{3/2} m^{1/2}}.
\end{align*}
which proves \eqref{eq:goal-r-close-to-1}.
\end{proof}

\providecommand{\bysame}{\leavevmode\hbox to3em{\hrulefill}\thinspace}
\providecommand{\MR}{\relax\ifhmode\unskip\space\fi MR }
\providecommand{\MRhref}[2]{%
	\href{http://www.ams.org/mathscinet-getitem?mr=#1}{#2}
}
\providecommand{\href}[2]{#2}

\end{document}